\documentclass[12pt]{article}
\usepackage{setspace}
\usepackage{amsmath}
\usepackage{amsfonts}
\usepackage{amssymb}
\usepackage{amsthm}
\usepackage{mathrsfs}
\usepackage{amsxtra}
\usepackage[all,cmtip]{xy}
\usepackage{graphicx}
\usepackage{color}
\usepackage{comment}
\usepackage{verbatim}
\usepackage{hyperref}

\newtheorem{thm}{Theorem}[section]

\newtheorem{lem}[thm]{Lemma}
\newtheorem{prop}[thm]{Proposition}
\newtheorem*{conj}{Conjecture}
\newtheorem{cor}[thm]{Corollary}
\newtheorem*{thmA}{Theorem A}
\newtheorem*{thmB}{Theorem B}
\newtheorem*{thmC}{Theorem C}

\theoremstyle{definition}
\newtheorem{defn}[thm]{Definition}
\newtheorem{rem}[thm]{Remark}

\def\XXint#1#2#3{{\setbox0=\hbox{$#1{#2#3}{\int}$}
\vcenter{\hbox{$#2#3$}}\kern-.5\wd0}}

\def\bA{\mathbb{A}}
\def\cA{{\mathcal A}}

\def\cB{{\mathcal B}}

\def\cD{{\mathcal D}}

\def\cR{{\mathcal R}}

\def\cF{{\mathcal F}}

\def\cS{{\mathcal S}}

\def\cP{\mathcal{P}}

\def\cX{{\mathcal X}}
\def\Z{{\Bbb Z}}
\def\R{{\Bbb R}}

\def\C{{\Bbb C}}

\def\Q{{\Bbb Q}}

\def\P{\mathbb{P}}

\def\Hom{\text{Hom}}

\def\det{\text{det}\,}

\def\GL{\operatorname{GL}}
\def\SL{\operatorname{SL}}

\def\Symb{\operatorname{Symb}}

\def\GL{\operatorname{GL}}

\def\sign{\operatorname{sign}}
\def\Res{\operatorname{Res}}

\def\P{\mathbb{P}}
\def\cP{\mathcal{P}}

\def\LP{\mathcal{LP}}

\def\Symm{\operatorname{Sym}}

\def\sgn{\operatorname{sgn}}

\def\cH{\mathcal{H}}
\def\cZ{\mathcal{Z}}
\def\bD{\mathbf{D}}

\def\wt{\operatorname{wt}}
\def\ev{\operatorname{ev}}

\topmargin -0.4in
\textwidth 6.5in
\textheight 9 in

\hoffset=-.55in
\voffset=-0.05in
\begin{document}


 \title{The $p$-adic Shintani modular symbol and evil Eisenstein series}
\author{G. Ander Steele}

\maketitle

\tableofcontents

\section{Introduction}

Let $p>2$ be prime. In this paper we explicitly compute the $p$-adic $L$-function of critical slope, or evil, Eisenstein series, giving a new proof of the recent results of Bella\"iche \& Dasgupta \cite{BeDa}. The difficulty in computing the $p$-adic $L$-function of a critical slope Eisenstein series comes from the subtlety of its definition. Let us suppose $f=\sum_{n\geq -}q^n\in M_{k+2}(\Gamma_1(N),\varepsilon)$ is a normalized eigenform with level $N$ prime to $p$.  We would like to attach $p$-adic $L$-functions to $f$'s two $p$-refinements, $f_\alpha$, $f_\beta \in M_{k+2}(\Gamma_1(N)\cap\Gamma_0(p))$, where $\alpha,\beta$ are roots of the polynomial $x^2+a_px+\varepsilon(p)p^{k+1}$. Briefly, $f_\alpha$ and $f_\beta$ are the newforms on which $U_p$ acts by $\alpha,\beta$ and with tame Hecke eigenvalues the same as $f$. Note that the $p$-adic valuation of the roots (their \emph{slope}) must belong to the interval $[0,k+1]$. In the case that $v_p(\alpha)<k+1$,  Visik \cite{Vis} and Amice-Velu \cite{AmVe} have shown that there exists a unique $p$-adic analytic function interpolating the critical values of $L(f_\alpha\chi,s)$ with growth rate prescribed by the slope. If the slope of $\beta$ is equal to $k+1$, the growth rate of the desired $p$-adic $L$-function is too high for the function to be uniquely determined by the critical values of $L(f_\beta,\chi,s)$. Instead, one can use Steven's notion of \emph{overconvergent modular symbols} to attach a $p$-adic $L$-function to $f_\beta$.

Denote by $\cD_k$ the module of locally analytic distributions on $\Z_p$ equipped with a  weight-$k$ action of $\Gamma_0(p)$. For each congruence subgroup $\Gamma\subset \Gamma_0(p)$, the module of weight-$k$ $\Gamma$ overconvergent modular symbols, $\Symb_{\Gamma}(\cD_k)$ is defined as the space of $\Gamma$-invariant homomorphisms from degree $0$ divisors of $\P^1(\Q)$ to $\cD_k$. The module $\Symb_{\Gamma}(\cD_k)$ comes equipped with an action of Hecke and an involution $\iota$ decomposing $\Symb_{\Gamma}(\cD_k)=\Symb_{\Gamma}^+(\cD_k)\oplus\Symb_{\Gamma}^-(\cD_k)$. Stevens' control theorem implies that the $f_\alpha$-isotypical subspaces $\Symb^{\pm}_{\Gamma}(\cD_k)[f_\alpha]$ are one-dimensional whenever $v_p(\alpha)<k+1$ and $f$ a newform; if $f$ is an ordinary Eisenstein series, then one of the eigenspaces will be zero. Picking generators $\Phi_{f'_\alpha}^\pm$, Pollack-Stevens \cite{PolSte11} \emph{define} the $p$-adic $L$-function of $f_{\alpha}$ to be the distribution $\mu_{f_{\alpha}}=\Phi_{f_\alpha}^+([\infty]-[0])+\Phi_{f_\alpha}^-([\infty]-[0])$, and show that it has the correct interpolation and growth properties. We will adopt the conventions of \cite{Bel12} and \cite{BeDa}, writing
\begin{defn}
\begin{equation*}	L_p(f_{\alpha},s) := \int_{\Z_p^\times} z^s d\mu_{f_\alpha}
\end{equation*}
for all $s\in\Hom_{cts}(\Z_p^\times,\C_p^\times)$.
\end{defn}
If $v_p(\beta)=k+1$ but $f_\beta$ is not in the image of the operator $\theta^{k+1}=(q\frac{d}{dq})^{k+1}:M_{-k}^\dagger(\Gamma)\longrightarrow\ M_{k+2}^\dagger(\Gamma)$, Pollack-Stevens \cite{PolSte} show that the generalized eigenspaces $\Symb_{\Gamma}^\pm(\cD_k)(f_\beta)$ are one-dimensional \cite{PolSte} and \emph{define} the $p$-adic $L$-function of $f_\beta$ to be the Mazur-Mellin transform of $\mu_{f_\beta}$. Their definition does not apply to the case of evil Eisenstein series: the critical slope refinement of a weight $k+2$ Eisenstein series is $\theta^{k+1}$ of an ordinary family of (overconvergent) Eisenstein series at weight $-k$. Bella\"iche \cite{Bel12} has shown that the Hecke eigenspaces $\Symb_{\Gamma}^\pm(\cD_k)[f_\beta]$ are one-dimensional (assuming mild technical hypotheses and standard conjectures) even when $f_\beta$ is $\theta$-critical.

Before \cite{Bel12}, Pasol and Stevens used Pollack's MAGMA programs to compute an approximation to an overconvergent eigensymbol $\Phi\in\Symb^+_{\Gamma_0(p\ell)}(\cD_0)$ (for $p=3,\ell=11$) with the same Hecke eigenvalues as the critical slope refinement of
\begin{equation*}
	E_{2,\ell} = \frac{\ell-1}{24}+\sum_{n\geq 1}a_nq^n \in M_2(\Gamma_0(\ell)),\text{ where } a_n=\sum_{\substack{ d|n\\ \ell\nmid d}} d.
\end{equation*}
Based on their numerical experiments, they conjectured that the $E_{2,\ell}$-eigenspace of $\Symb^+_{\Gamma_0(p\ell)}(\cD_0)$ is $1$-dimensional, generated by $\Phi$,  and that the $p$-adic $L$-function of $\Phi$ is given by
\begin{conj}[Pasol-Stevens]
\begin{equation*}
L_p(\Phi,s) = \wt(s)  (1- \ell^{s}) \zeta_p(s+1)  \zeta_p(1-s).
\end{equation*}
\end{conj}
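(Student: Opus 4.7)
The plan is to produce an explicit overconvergent modular symbol $\Phi \in \Symb^+_{\Gamma_0(p\ell)}(\cD_0)$ that is an eigensymbol with the same Hecke eigenvalues as the critical-slope refinement of $E_{2,\ell}$, and then to compute its $p$-adic $L$-function by direct integration against $z^s$. The construction will be a $p$-adic Shintani modular symbol: for each divisor $[r]-[s]$ with $r,s\in\P^1(\Q)$, $\Phi([r]-[s])$ is a locally analytic distribution on $\Z_p$ built from a Shintani cone decomposition of $\R_{>0}^2$, with the archimedean $\zeta$-values appearing in the classical Shintani formula replaced by Kubota-Leopoldt-type $p$-adic measures. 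Since $E_{2,\ell}$ essentially factors as a product of two Dirichlet $L$-series (trivial character, with an $\ell$-Euler factor removed), the resulting $p$-adic Shintani distribution should factor accordingly as a product/convolution of two pieces corresponding to the two zeta factors in the conjectured formula.

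First I would construct, for weight $-k$ with $k=0$ here, a $\Gamma_0(p)$-invariant overconvergent modular symbol attached to an ordinary Eisenstein family; its $\Gamma$-invariance reduces to the classical Shintani cocycle identity for cone decompositions, now verified in the locally analytic rather than smooth setting. Then $\ell$-stabilize to land in $\Gamma_0(p\ell)$, which introduces the Euler factor and is ultimately responsible for the $(1-\ell^s)$ in the formula, and $U_p$-stabilize by picking the unit root (the only one that survives on an ordinary family). To reach the critical-slope side I would apply the operator $\theta^{k+1}=\theta$, which on distributions is essentially multiplication by $z$; this converts the small-slope overconvergent symbol at weight $-k=0$ into a $\theta$-critical symbol whose Hecke eigenvalues match those of $E_{2,\ell}^{\mathrm{crit}}$. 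By Bella\"iche's one-dimensionality result for $\theta$-critical Hecke eigenspaces \cite{Bel12}, the resulting $\Phi$ must be the unique (up to scalar) generator of $\Symb^+_{\Gamma_0(p\ell)}(\cD_0)[E_{2,\ell}^{\mathrm{crit}}]$, which both resolves the one-dimensionality part of the Pasol-Stevens conjecture and shows $\Phi$ is the symbol relevant to the $L$-function computation.

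Finally, compute $L_p(\Phi,s)=\int_{\Z_p^\times} z^s\,d\mu_\Phi$ with $\mu_\Phi=\Phi([\infty]-[0])$ from the explicit formula. The product structure of the Shintani distribution reduces the integration to evaluating $z^s$ against the two $p$-adic zeta measures and the $\ell$-twist, which by construction produce $\zeta_p(s+1)$ and $\zeta_p(1-s)$ (the latter via the Kubota-Leopoldt functional equation) together with the tame factor $(1-\ell^s)$; the extra factor of $\wt(s)$ is exactly the shift introduced by the $\theta$-operator (multiplication by $z$), yielding the formula. The main obstacle is the construction step: one must produce a $p$-adic Shintani symbol whose underlying distribution is genuinely locally analytic (not merely a bounded measure) and whose $\Gamma_0(p\ell)$-equivariance survives $\theta$-critical stabilization. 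The Amice-V\'elu/Visik uniqueness does not apply in this slope, so rigidity of $\Phi$ is not automatic; the hard technical work is verifying the cocycle identity for the locally analytic Shintani distribution and then checking that the $\theta$-image still lies in $\Symb^+_{\Gamma_0(p\ell)}(\cD_0)$ with the expected moments.
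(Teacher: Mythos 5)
There is a genuine gap, and it is the central one: the route you propose --- build a full overconvergent Eisenstein symbol at the ordinary (negative) weight, $\ell$- and $U_p$-stabilize at the unit root, then apply $\theta^{k+1}=\theta$ and invoke Bella\"iche's one-dimensionality --- is exactly the strategy this paper's introduction singles out as \emph{not} working for evil Eisenstein series. Two things go wrong. First, a full $\Gamma$-invariant ordinary Eisenstein symbol valued in honest locally analytic distributions is not available: the nonvanishing constant terms of the Eisenstein series at the cusps obstruct the construction (in this paper that obstruction is precisely what the rational poles of $\widetilde{\cD}(\Z_p^2)$ encode, and the specialization $\rho_{-2-k}$ to weight $-2-k$ cannot be extended across those poles). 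Second, even granting such a symbol, applying $\theta^{k+1}$ to a \emph{full} modular symbol produces at most a generator of one of the two sign eigenspaces $\Symb^{\pm}_{\Gamma}(\cD_k)[E^{crit}]$, and that generator has identically trivial $p$-adic $L$-function; the conjectured formula $\wt(s)(1-\ell^s)\zeta_p(s+1)\zeta_p(1-s)$ is nonzero and lives on the $+$ eigenspace with $\sgn(s)=+1$. This is the very reason Bella\"iche--Dasgupta resort to \emph{partial} modular symbols (supported on a good subset of cusps) before applying $\theta^{k+1}$, and why the present paper takes a different detour altogether. Bella\"iche's one-dimensionality cannot rescue your argument: it identifies the generator only if your constructed symbol is nonzero in the correct sign eigenspace, which is precisely what fails. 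A related technical slip: on the distribution side $\theta^{k+1}$ is dual to $(k+1)$-fold differentiation of test functions, not ``multiplication by $z$''; this is exactly the mechanism by which the Mellin transform on $\Z_p^\times$ is destroyed, so the $\wt(s)$ factor cannot be obtained as ``the shift introduced by $\theta$''.

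For contrast, the paper never specializes to the ordinary weight and never applies $\theta$ to a symbol. It keeps the Shintani symbol valued in pseudo-distributions $\widetilde{\cD}(\Z_p^2)/\delta_0$ (Theorem \ref{T:analytic_modular_symbol}), pushes it to differential forms on the annulus $\cZ$ for weights $k\geq 0$, interpolates these specializations into analytic families over weight space, and \emph{evaluates the family at the negative weight} $-k$ (here $-k=0$, where Lemma \ref{L:is_zero} forces the symbol to vanish to first order and one takes $\lim_{k\to 0}\frac1k\Phi^k_{f'}$, which is where $\wt(s)$ actually comes from). The residue map $R$ to $\mathbf{D}_k$ is only $\Gamma_p$-equivariant, so the $U_p$-eigenvalue identity is established only on the divisor $\{\infty,0\}$, using the vanishing hypothesis at the cusps $0,\tfrac1p,\dots,\tfrac{p-1}{p}$ (Lemma \ref{L:satisfied}, Proposition \ref{P:almost_eigen}); a slope-decomposition argument (Proposition \ref{P:same_L}) plus Bella\"iche's theorem then shows $R\Phi^{-k}_{f'}$ has the same $L$-function as the critical eigensymbol, and the explicit factorization $\mu_{f'}=D_x\xi_{f_1'}\times\xi_{f_2'}$ yields the Kubota--Leopoldt factors and the $(1-\ell^s)$ from the choice $f'=f'_\ell$. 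Your proposal would need to be rebuilt around one of these two devices (rational-pole families evaluated at negative weight, or partial modular symbols) to have a chance of producing a symbol with nontrivial $L_p$.
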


Bella\"iche's results prove, in particular, that $\Symb^+_{\Gamma_0(p\ell)}(\cD_0(p))[E_{2,\ell}]$ is one-dimensional, but they do not describe a generator of the eigenspace. The goal of this paper is to write down explicit generators of the critical slope Eisenstein eigenspaces of $\Symb_\Gamma^\pm(\cD_k)$, thereby computing the corresponding $p$-adic $L$-functions. We fall slightly short of our goal, giving instead explicit elements of $\Symb_{\Gamma}^\pm(\cD_k)$ which have the same value on the divisor $[\infty]-[0]$ as an eigensymbol. Of course, this is good enough for our purposes, since the $p$-adic $L$-functions of our modular symbols and the Eisenstein modular symbols will be the same. Our method is based on a careful study of the {Shintani modular symbol}, a universal Eisenstein modular symbol for $\GL_2^+(\Q)$ valued in a module of distributions on the finite adeles $\bA_{\Q^2}^{\infty}$. Building on our earlier work in \cite{Ste1}, we show how to interpret the Shintani modular symbol in terms of Stevens' $p$-adic distributions with rational poles, $\widetilde{\cD}(\Z_p^2)$. The main theorem of \S2 is
\begin{thmA}
Fixing a Bruhat-Schwartz function $f'\in\cS(\bA_{\Q^2}^{\infty,p})$ and $\Gamma_{f'}\subset\GL_2(\Q)$ stabilizing $f'$, the Shintani modular symbol $\Phi_{f'}$ is an element of $\Symb_{\Gamma_{f'}}(\widetilde{\cD}(\Z_p^2)/\delta_0)$.
\end{thmA}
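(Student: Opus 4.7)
The plan is to verify both parts of the claim: that $\Phi_{f'}$ takes values in $\widetilde{\cD}(\Z_p^2)/\delta_0$, and that it is $\Gamma_{f'}$-invariant. I would proceed by unfolding the adelic construction of the universal Shintani symbol and then using the splitting $\bA_{\Q^2}^\infty = \bA_{\Q^2}^{\infty,p}\times\Q_p^2$ to pair the tame test function $f'$ off from a $p$-adic distribution.

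For the first part, the universal Shintani symbol sends a degree-zero divisor $D=[a]-[b]\in\Div^0(\P^1(\Q))$ to an adelic distribution built from the open cone in $\R^2$ cut out by the directions $a$ and $b$. Pairing the tame component against $f'$ produces a distribution on $\Q_p^2$; using that $f'$ is Bruhat--Schwartz, hence locally constant with compact support, and that the cone can be rescaled by a diagonal action of $\GL_2(\Q)$, one can arrange that the support of the resulting $p$-adic distribution lies in $\Z_p^2$. The fact that this distribution then lies in Stevens' $\widetilde{\cD}(\Z_p^2)$ would follow from the cone generating function identity established in \cite{Ste1}: pairing against a cone distribution expands as a rational function with simple poles along the edges of the cone, which is precisely the data capturing the module of distributions with rational poles.

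For the second part, the universal Shintani symbol is $\GL_2^+(\Q)$-equivariant under the diagonal adelic action by construction. Writing the action of $\gamma\in\Gamma_{f'}$ on $\Phi_{f'}(D)$ and applying the stabilizer condition $\gamma\cdot f'=f'$, the only nontrivial residual action sits in the $\Q_p^2$-component, and a matrix calculation identifies this with the relevant weight-$k$ right action on distributions. This gives $\Gamma_{f'}$-equivariance as an element of $\Symb_{\Gamma_{f'}}(\widetilde{\cD}(\Z_p^2))$, and invariance descends to $\widetilde{\cD}(\Z_p^2)/\delta_0$ since $\Q\cdot\delta_0$ is plainly preserved by the $\Gamma_{f'}\subset\GL_2(\Q)$-action (up to scalar).

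The main obstacle I expect is bookkeeping the $\delta_0$ ambiguity. The Shintani cone distribution is well-defined on the open cone but carries an intrinsic ambiguity at the apex, which is why Shintani's original formulas involve signed counts of lattice points on boundary faces. Different choices of cone representatives, or different ways of writing $D$ as a difference of cusps after the $\GL_2(\Q)$-rescaling, will match in $\widetilde{\cD}(\Z_p^2)$ only up to a scalar multiple of $\delta_0$. The crucial technical step is to show that this ambiguity lives \emph{exactly} in $\Q\cdot\delta_0$, and not in a larger boundary term; once this is established, all the preceding calculations make sense in the quotient, and $\Phi_{f'}$ is unambiguously an element of $\Symb_{\Gamma_{f'}}(\widetilde{\cD}(\Z_p^2)/\delta_0)$.
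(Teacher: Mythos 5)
Your sketch of the equivariance half is essentially the paper's argument: the universal symbol is $\GL_2^+(\Q)$-invariant, the specialization $\mu\mapsto\mu_{f'}$ intertwines the action with $f'\mapsto f'|\gamma^{*-1}$ (Lemma \ref{L:S0_action}), and invariance under the stabilizer of $f'\otimes[\Z_p^2]$ follows; the $\delta_0$ ambiguity is not a delicate point here, since Solomon's cocycle is valued in $\cD(V,\widetilde{\cR})/\delta_0$ from the outset and the Fourier transform carries $\delta_0$ to $\delta_0$, so everything is done in the quotient throughout. (One small correction: there is no ``weight-$k$ action'' in this statement; the action on $\widetilde{\cD}(\Z_p^2)$ is the two-variable adjugate action, and weight-$k$ specializations only appear later, in \S3.)

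The genuine gap is in the first half. The cone generating-function identity --- a rational function with homogeneous linear denominators attached to the edges of the cone --- only shows that the specialized values are \emph{locally polynomial} distributions with rational poles, i.e.\ elements of $\widetilde{\cD}_{poly}(\Z_p^2)$; that much is formal and is exactly what Corollary \ref{C:def} plus Lemma \ref{L:local_from_global} give. Theorem A asserts membership in $\widetilde{\cD}(\Z_p^2)$, the localization of the \emph{locally analytic} distributions, and passing from moments on polynomials to a continuous functional on $\cA(\Z_p^2)$ requires a growth estimate that your proposal never supplies. In the paper this is Theorem \ref{T:is_analytic}: one computes the Amice transform of $XY\cdot\Phi\{\infty,0\}_{f'}$ explicitly from the factorized formula for $\Psi\{\infty,0\}(f'\otimes[\Z_p^2])$, and the crucial point is that since $f'\otimes[\Z_p^2]$ is periodic at $p$ with respect to $\Z_p^2$, the period lattice can be chosen of the form $m\Z\times n\Z$ with $m,n$ $p$-adic \emph{units}; then $\log(q_1)/(1-q_1^m)$ and $\log(q_2)/(1-q_2^n)$ converge on the open polydisk $B(1,1)^2$, and Amice--Velu converts this rigid analyticity into the statement $XY\cdot\mu_{f'}\in\cD(\Z_p^2)$. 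Without an argument of this kind (the unit-period observation, or some equivalent boundedness of moments), ``rational function with simple poles along the edges'' does not capture $\widetilde{\cD}(\Z_p^2)$, only $\widetilde{\cD}_{poly}(\Z_p^2)$. Note also that the paper proves the estimate only for the divisor $\{\infty,0\}$ and then propagates it to all of $\Delta_0$ by the equivariance lemma applied to $\SL_2(\Z)$-translates (Theorem \ref{T:analytic_modular_symbol}), which is cleaner than trying to run the convergence argument on an arbitrary cone as your outline suggests.
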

\begin{rem}
Given a cuspform $f\in S_{k+2}(\Gamma)$, one forms a classical modular symbol in $\Symb_\Gamma(\Symm^k\C^2)$ by integrating the differential form $f(z)(zX+Y)^kdz$ against arcs between cusps, giving a polynomial in $\C[X,Y]_{k}\cong \Symm^k\C^2$. This recipe fails for Eisenstein series as the integrals fail to converge at cusps where the Eisenstein series has non-zero constant term. In \cite{Ste89}, Stevens constructs analogues of modular symbols valued in $\Q(X,Y)$ by integrating Eisenstein series on the Borel-Serre compactification of the upper half-plane. The values are polynomial terms, from integrating Eisenstein series along arcs in the upper half-plane, plus rational functions coming from integrals over ``modular caps" at the cusps. In some sense, the constant terms of Eisenstein series contribute the denominators of the rational functions and one can view $\widetilde{\cD}(\Z_p^2)$ as an enlarging of $\cD(\Z_p^2)$ that allows for the denominators.
\end{rem}

In \S3, we build specialization maps $\Symb_{\Gamma}\widetilde{\cD}(\Z_p^2)\longrightarrow \Symb_{\Gamma}\cD_k$ which send $\Phi_{f'}$ to Eisenstein overconvergent modular symbols. Recall that the ordinary specialization $g_{k,\alpha}$ of an Eisenstein series $g_k\in M_{k+2}(\Gamma_1(N))$ belongs to an analytic family of ordinary Eisenstein series, and this family can be specialized to $g_{-k-2,\alpha}\in M^\dagger_{-k}(\Gamma)$. Applying $\theta^{k+1}$ to $g_{-k-2,\alpha}$, one gets an evil Eisenstein series $f_\beta=\theta^{k+1}g_{-k-2,\alpha}\in M_{k+2}(\Gamma)$. One might hope to mimic this construction to produce critical slope Eisenstein modular symbols and, indeed, Bella\"iche has employed this strategy to compute the $p$-adic $L$-functions of critical slope CM cuspforms \cite{BelCM}. Unfortunately, this strategy only works for producing one of the two generators of the two eigenspaces $\Symb_{\Gamma}^+(\cD_k)[f_\beta]$ and $\Symb_\Gamma^-(\cD_k)[f_\beta]$, and this generator has trivial $p$-adic $L$-function. It should not be surprising that we can't specialize our symbols $\Phi_{f'}\in\Symb_{\Gamma}(\widetilde{\cD}(\Z_p^2))$ to ordinary symbols in $\Symb_{\Gamma}(\cD_{-2-k})$, but it is instructive to imagine how such an argument might go. In the absence of poles, one can define a $\Sigma_0(p)$-equivariant homomorphism $\rho_{-2-k}:\cD(\Z_p^2)\longrightarrow \cD_{-2-k}(\Z_p)$ by
\begin{equation}
	\int_{\Z_p} g(z) d\rho_{-2-k}(\mu)(z) = \int_{\Z_p\times\Z_p^\times} y^{-2-k} g(-x/y) d\mu(x,y),
\end{equation}
and composing with $\theta^{k+1}$  gives a $\Sigma_0(p)$-equivariant homomorphism $\theta^{k+1}\rho_{-2-k}:\cD(\Z_p^2)\longrightarrow \cD_k(\Z_p)(\det^{-k-1})$. Now $\rho_{-2-k}$ can not be extended to $\widetilde{\cD}$, but we manage to extend the homomorphism $\theta^{k+1}\circ\rho_{-2-k} \cD(\Z_p^2)\longrightarrow\cD_k(\Z_p)$ to a $\Gamma_0(p)$-equivariant homomorphism $\widetilde{\cD}(\Z_p^2)\longrightarrow \cD_k(\Z_p)$ by reinterpreting distributions as differential forms (see \cite{Sch84}). For a non-negative integer $k$ and an honest distribution $\mu\in \cD(\Z_p^2)$, $\theta^{k+1}\rho_{-2-k}\mu$ is the distribution 
\begin{equation*}
\int_{\Z_p} g(z) \theta^{k+1}\rho_{-2-k}\mu(z) =\int_{\Z_p\times\Z_p^\times} y^{-2-k}g^{(k+1)}(-x/y) d\mu(x,y)
\end{equation*}
for all $g\in\cA(\Z_p)$.The term $y^{-2-k}g^{(k+1)}(-x/y)$ is equal to the total residue over the disk $B[\Z_p,1]$ of the differential form $(k+1)!\frac{g(z)dz}{(x+yz)^{k+2}}$ for fixed $x,y\in\Z_p\times\Z_p^\times$. In fact, one can show 
\begin{align*}
	\int_{\Z_p} g(z) \theta^{k+1}\rho_{-2-k}\mu(z)
	=\int_{\Z_p\times\Z_p^\times} (k+1)!\Res_{B[\Z_p,1]} \frac{g(z)dz}{(x+zy)^{k+2}} d\mu(x,y)\\
	=(k+1)!\Res_{B[\Z_p,1]}\left(g(z)\cdot \int_{\Z_p\times\Z_p^\times}\frac{d\mu(x,y)}{(x+yz)^{k+2}}  dz\right),
\end{align*}
so that the  $\theta^{k+1}\rho_{-2-k}(\mu)$ is completely determined by the differential form $\int_{\Z_p\times\Z_p^\times}\frac{d\mu(x,y)}{(x+yz)^{k+2}}  dz$. Our strategy in \S3 is to build differential forms out of pseudo-distributions and view them, by the residue map, as distributions on $\Z_p$. For each positive integer $k$, it will be easy to specialize psuedo-distributions in $\widetilde{\cD}(\Z_p^2)$ to differential forms dual to $\cA_{-k}(\Z_p)$. We will show that  these differential forms vary in analytic families and we will produce forms dual to $\cA_k(\Z_p)$ by specializing the families to weight $-k$.  We show that the corresponding modular symbols $\Phi_{f'}^{-k}$ are Eisenstein symbols in $\Symb_{\Gamma}(\cD_k)$ with the desired sign, and Bellaiche's results on the dimension of the critical Eisenstein subspaces of $\Symb_{\Gamma}^\pm(\cD_k(\Z_p))$ allow us to show, for good choices of $f'$
\begin{thmB}
The $p$-adic $L$-function $L_p(\Phi_{f'}^{-k},s)$ is equal to the $p$-adic $L$-function of a critical slope Eisenstein series of weight $k+2$.
\end{thmB}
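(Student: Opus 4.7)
The plan is to exploit Bellaïche's one-dimensionality of the critical-slope Eisenstein eigenspace in order to match $\Phi_{f'}^{-k}$ with a genuine eigensymbol, and then compute the $p$-adic $L$-function by reading off the value on the divisor $[\infty]-[0]$. As noted in the introduction, we do not need $\Phi_{f'}^{-k}$ to itself be an eigensymbol: it suffices that its restriction to $[\infty]-[0]$ agree with that of an eigensymbol, since $L_p$ depends only on this distribution.

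First I would verify that $\Phi_{f'}^{-k}$ lies in a Hecke-generalized eigenspace for the evil Eisenstein series $f_\beta$ associated with $f'$. The universality of the Shintani symbol under $\GL_2^+(\Q)$ translates the adelic action on $f'$ directly into Eisenstein-type Hecke eigenvalues for $\Phi_{f'}$; composing with the specialization map of \S3, which on honest distributions reduces to $\theta^{k+1}\rho_{-2-k}$, forces the $U_p$-eigenvalue to acquire slope $k+1$, matching the critical refinement $f_\beta$ rather than the ordinary refinement $f_\alpha$. Projecting onto a $\iota$-eigenpart and choosing $f'$ so that the tame Hecke eigensystem matches that of the chosen weight-$(k+2)$ Eisenstein series $f$ then pins $\Phi_{f'}^{-k,\pm}$ inside the correct $\Symb_\Gamma^\pm(\cD_k)(f_\beta)$.

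Bellaïche's theorem then supplies the one-dimensionality of $\Symb_\Gamma^\pm(\cD_k)[f_\beta]$; combined with a Hecke idempotent $e_{f_\beta}$ projecting from the generalized to the honest eigenspace, this implies that $e_{f_\beta}\Phi_{f'}^{-k,\pm}$ is a scalar multiple of a chosen generator $\Phi_{f_\beta}^\pm$. To upgrade this to an equality of $p$-adic $L$-functions, I would show that $\Phi_{f'}^{-k,\pm}$ and $e_{f_\beta}\Phi_{f'}^{-k,\pm}$ agree on the divisor $[\infty]-[0]$ — this is the content of \emph{giving the same value on $[\infty]-[0]$ as an eigensymbol}, and is where the ``good choice of $f'$" enters: one must pick $f'$ so that the contribution from the generalized-but-not-honest eigenpart is absorbed. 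Granted this, the Mazur–Mellin transform of $\Phi_{f'}^{-k}([\infty]-[0])$ is a scalar multiple of $L_p(f_\beta,s)$, and the scalar is swallowed by the $L_p$-definition's intrinsic ambiguity in choice of generator; an explicit residue computation from the formula of \S3 shows the scalar is nonzero, yielding the claimed equality.

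The main obstacle is verifying the Hecke-equivariance of the specialization $\widetilde{\cD}(\Z_p^2)\to \cD_k(\Z_p)$: because $\widetilde{\cD}$ admits poles and the specialization of \S3 factors through a residue over $B[\Z_p,1]$, one must check that the $\Gamma_0(p)$-action survives the extension of $\theta^{k+1}\rho_{-2-k}$ from $\cD(\Z_p^2)$ to $\widetilde{\cD}(\Z_p^2)$, where the naive formulas break down. A secondary difficulty is controlling the $\delta_0$-ambiguity from Theorem A: since $\Phi_{f'}$ is only well-defined in $\widetilde{\cD}(\Z_p^2)/\delta_0$, one must confirm that for good $f'$ the $\delta_0$-component contributes trivially to $\Phi_{f'}^{-k}([\infty]-[0])$, so that no spurious constant distribution corrupts the $p$-adic $L$-function.
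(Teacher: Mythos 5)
Your overall shape (invoke Bella\"iche's one-dimensionality, and reduce everything to the value on $[\infty]-[0]$) matches the paper, but the engine of your argument --- placing $\Phi_{f'}^{-k}$ in a Hecke-\emph{generalized} eigenspace for the evil refinement $f_\beta$ and then projecting with an idempotent $e_{f_\beta}$ --- rests on a step that genuinely fails. You propose to ``verify the Hecke-equivariance of the specialization $\widetilde{\cD}(\Z_p^2)\to\cD_k(\Z_p)$,'' but this is not a verification that can succeed: the residue map $R$ is only $\Gamma_p$-equivariant, \emph{not} $\Sigma_0(p)$-equivariant (the paper says so explicitly), and $U_p$ does not even act naturally on the differential-form-valued symbols, since the matrices $\beta_i$ move the annulus $\cZ$ off itself. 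Consequently there is no way to transfer the $U_p$-eigenproperty of $\Phi_{f'}$ to $R\Phi_{f'}^{-k}$; a priori $R\Phi_{f'}^{-k}$ has components of every slope and is not known to lie in any finite-slope or generalized eigenspace, so the idempotent $e_{f_\beta}$ in your second step has nothing to act on. This is also where you misplace the role of the ``good choice of $f'$'': in the paper the vanishing hypothesis at the cusps $0,\tfrac{1}{p},\dots,\tfrac{p-1}{p}$ is used \emph{at the $U_p$ step}, not at the later absorption step --- it guarantees the relevant differential forms have no poles in the annulus moved by $\beta_a$, so that the residue computation gives the single identity $R\Phi_{f'}^{-k}|U_p\{\infty,0\}=p^{k+1}\lambda\, R\Phi_{f'}^{-k}\{\infty,0\}$ on that one divisor only.

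The paper's substitute for your eigenspace argument is exactly designed around this failure: it decomposes $R\Phi_{f'}^{-k}$ by slope in $\Symb_\Gamma(\mathbf{D}_k)$ and uses the identity above, iterated (Hida's ordinary projector for the sub-critical part, limits of $U_p/(\lambda p^{k+1})$ for the super-critical part), to show the non-critical-slope pieces \emph{vanish on} $\{\infty,0\}$, hence contribute nothing to $L_p$; only then does Bella\"iche's theorem identify the remaining slope-$(k+1)$ piece, up to nonzero scalar, with the critical Eisenstein eigensymbol. Without first establishing the partial eigen-relation at $\{\infty,0\}$ (your proposal never produces it) there is no mechanism to kill the other slope components, so your step ``$\Phi_{f'}^{-k,\pm}$ and $e_{f_\beta}\Phi_{f'}^{-k,\pm}$ agree on $[\infty]-[0]$'' is unsupported. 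The $\delta_0$-ambiguity you worry about is, by contrast, harmless: the modified Fourier transform $\cF^*$ kills $\delta_0$, so the specialization is already well defined on $\widetilde{\cD}(\Z_p^2)/\delta_0$.
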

We use the explicit nature of the Shintani modular symbol to compute $L_p(\Phi_{f'}^{-k},s)$, which factors as a product of Kubota-Leopoldt $p$-adic $L$-functions, plus an extra factor vanishing at interpolation characters. Before stating the main theorem, let us set some notation. Denote by $\cX=\Hom_{cts}(\Z_p^\times,\C_p^\times)$, the $\C_p$-points of weight-space. For a character $s\in\cX$, we will write $\wt(s):=\log_p((1+p)^s)/\log_p(1+p)$ and $\sgn(s)=(-1)^s$. Note that the integers embed in $\cX$ by $k\mapsto (z\mapsto z^k)$.
\begin{thmC}
Let $k\geq 0$ be an even integer and $\psi,\tau$ primitive Dirchlet characters with conductors $M,L$ both prime to $p$. 
\begin{enumerate}
\item If $k=0$ and $(M,L)=1$ or $k>0$ and $\psi\neq 1$, we have
$$L_p(E_{k+2,\psi,\tau}^{crit},s)=L^{-s}{\wt{s}  \choose k+1}L_p(\tau^{-1},s-k-1)L_p(\psi,1-s)$$
for all $s\in\cX$ with $\sgn(s)=\psi(-1)$. 
\item The $p$-adic $L$-function of $E_{2,\ell}^{crit}$ is equal to
$$L_p(E_{2,\ell}^{crit},s)=\wt(s)(1- \ell^{s})\zeta_p(s+1)  \zeta_p(1-s)$$
for all $s$ with $\sgn(s)=1$.
\end{enumerate}
\end{thmC}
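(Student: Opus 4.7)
The plan is to invoke Theorem B and reduce Theorem C to an explicit computation of $L_p(\Phi_{f'}^{-k},s)$ for a well-chosen Bruhat-Schwartz function $f'\in\cS((\bA_\Q^{\infty,p})^2)$. For part (1), I would take $f'_{\psi,\tau}$ built from the Fourier-theoretic data of the Dirichlet characters: away from the primes dividing $ML$ it is the indicator of $\widehat{\Z}^2$, while at the ramified primes it is a weighted sum of indicator functions of cosets realizing $(m,n)\mapsto\psi(m)\tau(n)$. One then verifies that $\Phi_{f'_{\psi,\tau}}^{-k}$ lies in the correct $E_{k+2,\psi,\tau}^{crit}$-isotypical eigenspace with the sign dictated by $\sgn(s)=\psi(-1)$; Bella\"iche's one-dimensionality result together with Theorem B then identifies $L_p(\Phi_{f'_{\psi,\tau}}^{-k},s)$ with $L_p(E_{k+2,\psi,\tau}^{crit},s)$ up to a scalar normalized to $1$ by the choice of $f'$.

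The heart of the computation is to write $\Phi_{f'}([\infty]-[0])\in\widetilde{\cD}(\Z_p^2)/\delta_0$ as an explicit Shintani pseudo-distribution via the cone decomposition of \S2, then push it through the residue specialization of \S3. After unfolding the adelic sum defining $\mu_{f'}$, the integral $\int_{\Z_p^\times}z^s\,d\mu(z)$ decomposes as a sum, indexed by residue classes $(a,b)\in(\Z/ML\Z)^2$ and weighted by $\psi(a)\tau(b)$, of partial Hurwitz-type $p$-adic distributions. Standard manipulations then identify each partial sum with the Mazur measure defining Kubota-Leopoldt, yielding the product $L_p(\tau^{-1},s-k-1)\cdot L_p(\psi,1-s)$, while the prefactor $L^{-s}$ arises from normalizing the argument of $\tau^{-1}$.

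The binomial coefficient $\binom{\wt(s)}{k+1}$ appears because the specialization of \S3 interpolates $\theta^{k+1}\rho_{-2-k}$ analytically across weight space: at integer weights the operator is the usual composite, but the analytic continuation to arbitrary $s\in\cX$ carries the factor $\binom{\wt(s)}{k+1}$ characteristic of Amice-Velu-Visik interpolation of $(k+1)$-fold differentiation along the weight family. The vanishing of this binomial at $s\in\{0,1,\ldots,k\}$ matches the expected vanishing of the evil Eisenstein $p$-adic $L$-function on the critical interpolation range. For part (2), a parallel choice of $f'$ adapted to the $\ell$-stabilization of $E_2$---which must be handled separately because $E_2$ is only quasi-modular and its stabilization picks up an Euler factor---yields the Pasol--Stevens formula after the contribution of $f'$ at the prime $\ell$ produces $(1-\ell^s)$.

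The main obstacle is the bookkeeping in two places: (i) choosing $f'$ sharply enough that $\Phi_{f'}^{-k}$ lies in the correct isotypical and sign eigenspace without pollution from other eigensystems or from the Dirac-mass ambiguity in $\widetilde{\cD}(\Z_p^2)/\delta_0$, and (ii) rigorously identifying the unfolded Shintani partial zeta distributions with the Mazur measure underlying Kubota-Leopoldt. The first is delicate because the $\delta_0$-quotient encodes the non-vanishing constant terms of Eisenstein series (and these are exactly what force the use of $\widetilde{\cD}$ over $\cD$), while the second requires precise tracking of Euler factors at $p$, $M$, $L$ (and $\ell$ in part (2)) through the residue specialization to confirm that no spurious terms remain.
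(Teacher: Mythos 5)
Your overall outline---choose an explicit Hecke-eigen test function $f'$ adapted to $(\psi,\tau)$ (resp.\ to $\ell$), invoke Theorem B together with Bella\"iche's one-dimensionality, and then compute the moments of $\Phi_{f'}\{\infty,0\}$ by factoring $\mu_{f'}$ into one-variable Shintani distributions identified with Kubota--Leopoldt $p$-adic $L$-functions---is indeed the paper's strategy. The genuine gap is at the step where you propose to ``verify that $\Phi_{f'_{\psi,\tau}}^{-k}$ lies in the correct $E_{k+2,\psi,\tau}^{crit}$-isotypical eigenspace.'' This is not available, and the paper deliberately avoids claiming it: the residue specialization $R:\Symb_{\Gamma}(\Omega^{bd}_{-k}(\cZ)(k))\to\Symb_\Gamma(\mathbf{D}_k)$ is only $\Gamma_p$-equivariant, not $\Sigma_0(p)$-equivariant, so $R\Phi_{f'}^{-k}$ need not be a $U_p$-eigensymbol at all and cannot simply be placed in the critical eigenspace. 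What the paper proves instead is the $U_p$-relation on the single divisor $\{\infty,0\}$: the vanishing hypothesis (goodness of $f'$ at the cusps $0,\tfrac1p,\dots,\tfrac{p-1}{p}$, Lemma \ref{L:satisfied}) controls the poles so that the residue pairing almost commutes with the $\beta_a$ (Lemma \ref{L:almost_equivariance}), giving $R\Phi_{f'}^{-k}|U_p\{\infty,0\}=p^{k+1}\lambda\, R\Phi_{f'}^{-k}\{\infty,0\}$ (Proposition \ref{P:almost_eigen}); then a slope decomposition of $\Symb_\Gamma(\mathbf{D}_k)$ combined with Hida's ordinary projector and iteration of $U_p/(\lambda p^{k+1})$ shows the components of slope $\neq k+1$ vanish on $\{\infty,0\}$, so $R\Phi_{f'}^{-k}$ differs from the true critical eigensymbol only by a symbol with trivial $p$-adic $L$-function (Proposition \ref{P:same_L}). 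Without this (or some substitute), your identification of $L_p(\Phi_{f'}^{-k},s)$ with $L_p(E^{crit}_{k+2,\psi,\tau},s)$ does not go through; note you never use the vanishing hypothesis, which is exactly what makes the $U_p$-computation on $\{\infty,0\}$ legitimate.

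A second omission concerns $k=0$, i.e.\ the $(M,L)=1$ case of part (1) and all of part (2): when $f'$ is good for a cusp, the weight-$0$ specialization $\Phi_{f'}^{0}$ vanishes identically (Lemma \ref{L:is_zero}), so one must renormalize inside the analytic family and work with $\lim_{k\to 0}\tfrac1k\Phi_{f'}^{k}$; this renormalization is precisely what produces the nontrivial symbol and the $\wt(s)$ factor in part (2), and your proposal does not address it. Your heuristic for the factor $\binom{\wt(s)}{k+1}$ is compatible with what actually happens, but in the paper it falls out of the explicit moment computation (the coefficient $\binom{-k}{m-k}$ combined with the derivative factor coming from $D_y\xi_{f'_2}$ and the restriction to $\Z_p^\times$ via the $U_p$-relation of Lemma \ref{L:restriction}), so it is a consequence of the computation rather than an input; the load-bearing items you must supply are the eigenspace-avoidance argument and the $k=0$ degeneration above.
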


\begin{rem}
The $p$-adic $L$-function is vanishes for characters with the wrong sign--see \S1 of \cite{BeDa}.
\end{rem}
This strategy of using analytic families of ``Eisenstein" cocycles or modular symbols was suggested by the earlier work of Duff Campbell, who showed certain values of a similar cocycle vary in $p$-adic analytic families \cite{Duff} and Kalin Kostadinov \cite{Kalin}, who used Steven's notion of \emph{distributions with rational poles} to build an analytic family of modular symbols specializing at $0$ to a symbol with the same $p$-adic $L$-function predicted by Pasol and Stevens's conjecture.

Finally, we remark that our approach to Theorem C differs greatly from the recent work Bella\"iche and Dasgupta \cite{BeDa}. Their approach uses families of Eisenstein partial modular symbols--homomorphisms from divisors of $\P^1(\Q)$ supported at a subset of cusps $C$-- to build ordinary partial modular symbols $\Phi^{ord}\in\Symb_{\Gamma,C}(\cD_{-2-k})$ corresponding the the ordinary Eisenstein series $g_{\alpha,-k}$. Applying $\theta^{k+1}$ to $\Phi^{ord}$, they get a partial modular symbol $\theta^{k+1}\Phi^{ord}\in\Symb_{\Gamma,C}(\cD_k)$ with the eigenvalues of the evil Eisenstein series $f_{\beta}\in M_{k+2}(\Gamma)$. A comparison of partial modular symbols with full modular symbols shows that $\theta^{k+1}\Phi^{ord}$ is the restriction of the full modular symbol generating the evil eigenspace in $\Symb_{\Gamma}(\cD_k)$. Their explicit knowledge of $L_p(\theta_k\Phi^{ord},s)$ allows them to compute the desired $p$-adic $L$-function. 

\subsection{Acknowledgements}
The contents of this paper have been adapted from portions of my PhD thesis, which was supervised by Glenn Stevens; it is a great pleasure to acknowledge and give thanks for his enormous influence. This paper would not have been possible without our many discussions or his constant encouragement. Finally, I would like to thank Matthew Greenberg, Robert Pollack, and Jay Pottharst for many helpful conversations.

\subsection{Notation \& Conventions}
Let $V$ denote $\Q^2$, viewed as column vectors, so that $\GL_2(\Q)$ acts on $V$ via left multiplication. We will write $\cS(V)=\cS(\bA_{V}^{(\infty)})$ for the group of test functions on the finite adeles of $V$. Concretely, $\cS(V)$ is the group of functions $f:V\rightarrow\Z$ that are supported on a lattice and are also periodic with respect to a lattice. The group $\GL_2(\Q)$ naturally acts on $\cS(V)$ on the right by $(f|\gamma)(v)=f(\gamma v)$. Fixing a prime $p>2$, we denote by $\cS(V^{(p)})=\cS(\bA_{V}^{(\infty,p)})$ the group of test functions away from $p$, and $\cS(V_p)$ the group of test functions at $p$. Given $f'\in\cS(V^{(p)})$, we will write $\Gamma_{f'}\subset\GL_2(\Q)$ for the subgroup stabilizing $f'$; we will denote by $\Gamma_p\subset\GL_2(\Q)$ the subgroup of matrices stabilizing $\Z_p\times\Z_p^\times\subset\Q_p^2$. When no confusion is likely to arise, we will simply write $\Gamma$ to denote the intersection of $\Gamma_{f'}$ and $\Gamma_p$. 

Denote by $\cR$ the ring of power series $\Q[[X,Y]]$, a left $\Q[GL_2(\Q)]$-module under the action $\gamma\cdot F(X,Y)= F((X,Y)\gamma)=F(aX+cY,bX+dY)$ for all $\gamma=\begin{pmatrix} a & b\\ c& d\end{pmatrix}\in\GL_2(\Q)$. Let $S$ be the multiplicative subset of $\Q[X,Y]$ generated by non-zero linear polynomials $S=\langle aX+bY\neq 0\rangle$. Write $\widetilde{\cR}$ for the localization $S^{-1}\cR$, which is just the subset of $\widetilde{\cR}$ of power series with homogeneous denominators.

\section{The Shintani modular symbol}

\subsection{Shintani cocycles}
Solomon's Shintani cocycle (\cite{Sol98} and \cite{Sol99}) is a $1$-cocycle on $\GL_2(\Q)$ valued in a module of distributions inspired by Shintani's study of $L$-functions of totally real fields. These distributions are simply homomorphisms from the group of test functions $\cS(V)$ to $\widetilde{\cR}$. Define
\begin{equation*}
	\cD(V,\widetilde{\cR}):=\Hom_{\Z}(\cS(\bA_{\Q^2}^{\infty}), \widetilde{\cR} ),
\end{equation*}
and equip it with two actions of $\GL_2(\Q)$. The left action of $\GL_2$ on $V$ induces a right action on $\cS(V)$, and a left action on $\cD(V,\widetilde{\cR})$ by putting, for each $f\in\cS(\bA_{V}^{\infty})$ and $\gamma\in\GL_2(\Q)$, $(\gamma\cdot \mu)(f) = \gamma \cdot \mu(f|\gamma).$

\begin{rem}
We endow $\cD(V,\widetilde{\cR})$ with a right $\GL_2(\Q)$ by $\mu|\gamma:=\gamma^*\cdot\mu$, where $\gamma^*=\det(\gamma)\gamma^{-1}$. We use the adjugate action because it preserves integrality of matrices, which will be useful to use when we are computing Hecke operators.
\end{rem}

The key idea is the construction of distributions on $\Q^2$ from cones. Briefly, a cone $C\subset\R_2$ (say the positive span of two rational vectors $v,w\in\R^2$, $C=C(v,w)$) determines a distribution $\mu_C\in\cD(V,\widetilde{\cR})$ by sending a test function $f$ to the power series representing
\begin{equation}\label{E:mu_def}
	\sideset{}{'}\sum_{u\in \Q^2\cap C} f(u)e^{u\cdot(X,Y)}.
\end{equation}
Here $(v_1,v_2)\cdot(X,Y)=v_1X+v_2Y$ and we are weighting points on the edge of the cone by $1/2$ and the origin by $0$. Viewing the formal sum as a function of $X,Y$, we see that it converges for $X,Y$ in the dual cone of $C$. After rescaling by positive rational numbers, we may assume $f$ is periodic with respect to the vectors $v,w$ and the sum converges to
\begin{equation*}
	\frac{1}{1-e^{v\cdot(X,Y)}}\frac{1}{1-e^{w\cdot(X,Y)}} \sideset{}{'}\sum_{v\in \Q^2 \cap P} f(v) e^{v\cdot(X,Y)},
\end{equation*}
where $P$ is the parallelogram $\{ \alpha v+\beta w: 0\leq \alpha,\beta\leq 1\}$. Observe that $(aX+bY)\cdot\frac{1}{1-e^{aX+bY}}=\sum_{n\geq} (-1)^nB_n\frac{(aX+bY)^n}{n!}\in\Q[[X,Y]]$ and thus $\frac{1}{1-e^{aX+bY}}$ has a well-defined power series in $\Q((X,Y))$ with homogenous denominator $aX+bY$. Therefore, we may view the formal sum (\ref{E:mu_def}) as representing an element of $\widetilde{\cR}$ which we define to be $\mu_C(f)$.

Solomon defines an homogenous $1$-cocycle on $\GL_2^+(\Q)$ by sending a tuple $(\alpha,\beta)\in\GL_2(\Q)^2$ to the distribution $\det(\alpha e_1,\beta e_1) \mu_{C(\alpha e_1,\beta e_1)}$.
\begin{thm}[Solomon]
There exists a $\GL_2^+(\Q)$ cocycle $\Psi \in Z^1(\GL_2^+(\Q),\cD(V,\widetilde{\cR})/\delta_0)$ satisfying
\begin{enumerate}
\item If $\sigma\in\GL_2^+(\Q)$ sends the cusp $\infty$ to $0$, then $\Psi( 1,\sigma) (f) = \sideset{}{'}\sum_{v\in \Q_+^2} f(v) e^{v\cdot(X,Y)}$ for all $f\in\cS(\Q^2)$. 
\item For all $\lambda\in\Q^\times$, $\alpha,\beta\in\GL_2^+(\Q)$, $\Psi(\lambda \alpha,\lambda\beta)=\Psi(\alpha,\beta)$.
\item $\Psi$ is parabolic: For each cusp $r\in\P^1(\Q)$, denote by $\Gamma_v\subset\SL_2(\Z)$ the stabilizer of $r$. The restriction of $\Psi$ to $\Gamma_r$ is trivial for all cusps $r\in\P^1(\Q)$.
\end{enumerate}
\end{thm}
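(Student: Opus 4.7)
The plan is to define $\Psi(\alpha,\beta)=\det(\alpha e_1,\beta e_1)\mu_{C(\alpha e_1,\beta e_1)}$ and verify, in sequence, that (i) this expression lands in $\cD(V,\widetilde{\cR})/\delta_0$, (ii) it satisfies the cocycle identity, and (iii) the three listed properties hold. The main preliminary is to make the cone distributions $\mu_C$ rigorous, and the main obstacle is the cocycle identity modulo $\delta_0$.

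For the preliminaries, fix a two-dimensional simplicial rational cone $C=C(v,w)$ and $f\in\cS(V)$. After a scalar rescaling of $v,w$, one may assume $f$ is $\Z v+\Z w$-periodic, and (\ref{E:mu_def}) converges in the dual cone to the closed form displayed in the excerpt. The identity $t/(1-e^t)=\sum_{n\ge 0}(-1)^nB_n t^n/n!$ shows that each factor $(1-e^{v\cdot(X,Y)})^{-1}$ expands as a Laurent series with a simple pole along a homogeneous linear form, so the product lies in $\widetilde\cR$. Independence of the choice of generators $v,w$ of $C$ reduces, via the telescoping identity $(1-e^{nt})=(1-e^t)(1+e^t+\cdots+e^{(n-1)t})$, to a standard change of basis in $\SL_2(\Z)$, and the residual ambiguity from the weighting at the origin is precisely what is absorbed by the quotient by $\delta_0$.

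The cocycle identity
\begin{equation*}
\Psi(\beta,\gamma)-\Psi(\alpha,\gamma)+\Psi(\alpha,\beta)\equiv 0\pmod{\delta_0}
\end{equation*}
is Shintani's principle of signed cone additivity. Given three rays through $\alpha e_1,\beta e_1,\gamma e_1$ emanating from the origin, the three two-dimensional cones they span satisfy an oriented inclusion--exclusion relation whose signs are encoded by the $\det$-prefactors: whenever two cones $C_1,C_2$ share a wall one has $\mu_{C_1}+\mu_{C_2}=\mu_{C_1\cup C_2}$ modulo a one-dimensional distribution supported on the shared wall, and all such lower-dimensional contributions lie in the kernel of the projection to $\cD(V,\widetilde{\cR})/\delta_0$. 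The principal obstacle is the case analysis on the relative positions of the three rays (including degenerations where two rays coincide, or where the three are not in general position), each of which requires a separate verification that the ray-supported contributions lie in $\delta_0\cdot\widetilde{\cR}$.

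The three properties then follow with relatively little work. For (1), $\sigma(\infty)=0$ forces $\sigma e_1$ to be a positive multiple of $e_2$, so $C(e_1,\sigma e_1)=\Q_+^2$ and the stated formula is exactly the evaluation of $\mu_{\Q_+^2}$ on $f$, using the normalization of the $\det$-prefactor supplied by (2). Property (2) is a direct verification from the definitions, using that $C(\lambda\alpha e_1,\lambda\beta e_1)=C(\alpha e_1,\beta e_1)$ is unchanged under the rescaling. For (3), by conjugation it suffices to treat $r=\infty$, whose $\SL_2(\Z)$-stabilizer is generated by the matrices $\gamma$ fixing $\pm e_1$; then $\det(e_1,\gamma e_1)=0$ and $C(e_1,\gamma e_1)$ degenerates to the half-line $\R_{\ge 0}e_1$, so $\Psi(1,\gamma)$ vanishes in $\cD(V,\widetilde{\cR})/\delta_0$ and the restriction of the associated inhomogeneous cocycle to $\Gamma_\infty$ is trivial.
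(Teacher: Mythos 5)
The paper itself does not prove this statement: it is quoted from Solomon \cite{Sol98}, \cite{Sol99}, so your sketch can only be measured against Solomon's construction, whose overall shape (cone distributions, signed additivity, quotient by $\delta_0$) you do reproduce. However, two of your assertions are false, and they sit exactly where the content of the theorem lies. In (2) you claim $C(\lambda\alpha e_1,\lambda\beta e_1)=C(\alpha e_1,\beta e_1)$: for $\lambda<0$ the cone is replaced by $-C$, and similarly in (1) the matrix $\sigma=\begin{pmatrix}0&1\\-1&0\end{pmatrix}$ sends $\infty$ to $0$ but has $\sigma e_1=-e_2$, so $\sigma e_1$ need not be a positive multiple of $e_2$. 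These cases are not a ``direct verification'': they require the nontrivial identity relating $\mu_{-C}$ to $\mu_C$, which rests on the functional equation $1/(1-e^{-t})=1-1/(1-e^{t})$ together with the $1/2$-weighting of boundary points; note also that with the literal prefactor $\det(\alpha e_1,\beta e_1)$ rather than its sign, (2) would fail outright by a factor $\lambda^2$, so the normalization itself has to be addressed.

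More seriously, in the cocycle identity you discard the wall-supported one-dimensional distributions on the grounds that they ``lie in the kernel of the projection to $\cD(V,\widetilde{\cR})/\delta_0$.'' They do not: the quotient is only by the origin-supported distribution $\delta_0$, and a distribution supported on a ray survives it. In Solomon's argument the point is rather that the $1/2$-weighting of the edges makes the signed additivity of the cone sums exact along shared walls, so that the only ambiguity is concentrated at the origin, which is precisely why quotienting by $\delta_0$ suffices. As written, your central step asserts a cancellation that the quotient does not provide, so the cocycle identity is not established.
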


\subsection{Modular symbols}
Let $\Gamma$ be a subgroup of $\GL_2(\Q)$, which acts on $\P^1(\Q)$ via fractional linear transformations: $\gamma\cdot s=\frac{as+b}{cs+d}.$ Let $\Delta_0$ denote the group of degree $0$ divisors on $\P^1(\Q)$ with the induced $\Gamma$-action. For any two cusps $r,s\in\P^1(\Q)$, our convention will be to write $\{r,s\}$ for the divisor $[s]-[r]\in\Delta_0$. Given a right $\Z[\Gamma]$-module $M$, $\Gamma$ acts on $\Hom_{\Z}(\Delta_0,M)$ by $(\varphi|\gamma)(D):=\varphi(\gamma D) |\gamma.$  A $M$-valued modular symbol is a $\Gamma$-invariant homomorphism $\varphi:\Delta_0\longrightarrow M$. That is, $\varphi(\gamma D)|\gamma=\varphi(D)$.
\begin{defn}
The module of $M$-valued modular symbols is denoted
\begin{equation*}
\Symb_{\Gamma}(M):=\Hom_{\Z}(\Delta_0,M)^\Gamma.
\end{equation*}
\end{defn}

Properties (2) and (3) of Solomon's cocycle allow us to interpret it as a $\GL_2^+(\Q)$ modular symbol, viewing $\cD(V,\widetilde{\cR})$ as a right $\GL_2^+(\Q)$-module by $\mu|\gamma:=\gamma^*\cdot\mu$.

\begin{thm}
There exists a $\GL_2^+(\Q)$ modular symbol $\Psi\in\Symb_{\GL_2^+(\Q)}(\cD(V,\widetilde{\cR})/\delta_0)$ characterized by
\begin{equation*}
	\Psi\{0,\infty\} (f)=\sideset{}{'}\sum_{v\in \Q_+^2} f(v) e^{v\cdot(X,Y)}
\end{equation*}
for all $f\in\cS(V)$.
\end{thm}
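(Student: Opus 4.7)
The plan is to invoke the classical bijection between parabolic homogeneous $1$-cocycles on $\GL_2^+(\Q)$ and $\GL_2^+(\Q)$-equivariant homomorphisms $\Delta_0\to M$. Because $\GL_2^+(\Q)$ acts transitively on $\P^1(\Q)$, every cusp has the form $\alpha\infty$ for some $\alpha\in\GL_2^+(\Q)$, so every divisor in $\Delta_0$ is a $\Z$-linear combination of elementary divisors $\{\alpha\infty,\beta\infty\}=[\beta\infty]-[\alpha\infty]$. I would set
$$\Psi\{\alpha\infty,\beta\infty\} := \Psi(\beta,\alpha)\in\cD(V,\widetilde{\cR})/\delta_0$$
(the ordering of the pair being chosen so that property (1) produces the desired sign on $\{0,\infty\}$) and extend $\Z$-linearly.

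Three things then need to be checked: well-definedness, additivity over divisors, and $\GL_2^+(\Q)$-equivariance. For well-definedness, if $\alpha\infty=\alpha'\infty$ and $\beta\infty=\beta'\infty$, then $\alpha'=\alpha\gamma_1$ and $\beta'=\beta\gamma_2$ with $\gamma_i\in\Stab_{\GL_2^+(\Q)}(\infty)$. The homogeneous cocycle identity reduces the problem to showing $\Psi(\alpha,\alpha\gamma)\equiv 0\pmod{\delta_0}$ for $\gamma$ stabilizing $\infty$, which by the cocycle transformation law is equivalent to $\Psi(1,\gamma)\equiv 0$; this is exactly the parabolic vanishing (property 3) together with scalar-invariance (property 2), the latter absorbing the ambiguity between $\GL_2^+(\Q)$ and $\SL_2(\Z)$ lifts of a stabilizer element. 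Additivity translates the divisor relation $[\gamma\infty]-[\alpha\infty]=([\beta\infty]-[\alpha\infty])+([\gamma\infty]-[\beta\infty])$ into the cocycle relation $\Psi(\alpha,\gamma)=\Psi(\alpha,\beta)+\Psi(\beta,\gamma)$. Equivariance follows from the cocycle transformation law $\Psi(\delta\alpha,\delta\beta)=\Psi(\alpha,\beta)|\delta$, interpreted under the right action $\mu|\gamma:=\gamma^\ast\cdot\mu$.

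The explicit formula on $\{0,\infty\}$ is then immediate from property (1): taking $\alpha=\sigma$ with $\sigma\infty=0$ and $\beta=1$ gives $\Psi\{0,\infty\}=\Psi(1,\sigma)$, which by Solomon's theorem equals the claimed primed sum. The main obstacle is the bookkeeping around signs and conventions: one must carefully align the determinant factor $\det(\alpha e_1,\beta e_1)$ built into Solomon's cocycle with the adjugate right action $\mu|\gamma:=\gamma^\ast\cdot\mu$ on $\cD(V,\widetilde{\cR})$, and verify that parabolic vanishing modulo $\delta_0$ (as opposed to on the nose) is still enough to trivialize the parabolic stabilizer contributions. Once these conventions are pinned down, the result is a formal corollary of the fact that Solomon's cocycle is parabolic and scalar-invariant.
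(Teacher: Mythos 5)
Your proposal is correct and follows essentially the same route as the paper: define $\Psi\{r,s\}$ via Solomon's cocycle evaluated on a pair of matrices sending $\infty$ to the two cusps, use parabolicity (together with scalar invariance) for well-definedness, the cocycle relation for additivity on $\Delta_0$, and the homogeneous transformation law plus scalar invariance to absorb the $\det$ factor from the adjugate action and obtain $\GL_2^+(\Q)$-invariance. Your care with the ordering convention on $\{0,\infty\}$ is a reasonable (and arguably cleaner) handling of a sign the paper glosses over.
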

\begin{proof}
For any cusps $r,s,\in \P^1(\Q)$, let $\gamma_r,\gamma_s$ be choices of matrices sending $\infty$ to $r$ and $s$, respectively. We can define $\Psi\{r,s\}:=\Psi(\gamma_r,\gamma_s)$, and by the parabolic property of $\Psi$ this is well-defined. The cocycle property of $\Psi$ implies that the map $\{r,s\}\mapsto \Psi\{r,s\}$ extends to a homomorphism $\Delta_0\rightarrow \cD(V,\widetilde{\cR})/\delta_0$. It remains to check the $\GL_2(\Q)$-invariance of $\Psi$: For any $\alpha\in\GL_2^+(\Q)$ and cusps $r,s\in\P^1(\Q)$, $\Psi\{\alpha r,\alpha s\}|\alpha = \alpha^*\Psi(\alpha\gamma_r,\alpha\gamma_s)=\det(\alpha) \alpha^{-1}\cdot\Psi(\alpha\gamma_r,\alpha\gamma_s)=\det(\alpha)\cdot \Psi(\gamma_r,\gamma_s)$. The invariance of $\Psi(\gamma_r,\gamma_s)$ under scalars shows that this is equal to $\Psi\{r,s\}$, as desired.
\end{proof}

We record two useful lemma's about the modular symbol $\Psi$. First, observe that the denominator of $\Psi\{\infty,0\}(f)$ is $XY$, i.e. $XY\Psi\{\infty,0\}(f)\in\Q[[X,Y]]$ for all $f\in\cS(V)$. Using the $\GL_2(\Q)^+$-invariance of $\Psi$ and the fact that $\GL_2(\Q)^+$ acts transitively on divisors $\{r,s\}\in\Delta_0$, we have
\begin{lem}\label{L:pole_desc}
For all $\frac{a}{c},\frac{b}{d}\in\P^1(\Q)$ and $f\in\cS(V)$,  $(aX+bY)(cX+dY)\Psi\left\{\frac{a}{c},\frac{b}{d}\right\}(f)\in\Q[[X,Y]].$\end{lem}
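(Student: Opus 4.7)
My plan is to deduce the lemma from the base case $\{r,s\}=\{0,\infty\}$ by transporting via the $\GL_2^+(\Q)$-invariance of $\Psi$ that was just established.

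First I would handle the base case. Starting from the defining formula
$$\Psi\{0,\infty\}(f)=\sideset{}{'}\sum_{v\in\Q_+^2}f(v)e^{v\cdot(X,Y)}$$
from the preceding theorem, after rescaling $f$ by a positive integer so that it becomes periodic with respect to $e_1,e_2$ (which only rescales the numerator), the geometric-series computation used in the construction of $\mu_C$ writes this as
$$\Psi\{0,\infty\}(f)=\frac{P(e^X,e^Y)}{(1-e^X)(1-e^Y)}$$
for a polynomial $P$ recording the weighted sum over a fundamental parallelogram. Since $1-e^Z = -Z\cdot u(Z)$ for a unit $u(Z)\in\Q[[Z]]^\times$, expanding as a power series in $X,Y$ shows $XY\cdot\Psi\{0,\infty\}(f)\in\Q[[X,Y]]$. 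In the notation of the lemma this corresponds to $\tfrac{a}{c}=0,\tfrac{b}{d}=\infty$, where $(aX+bY)(cX+dY)=Y\cdot X=XY$.

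For the general case, given cusps $\tfrac{a}{c},\tfrac{b}{d}\in\P^1(\Q)$, I would select representatives so that $\alpha=\begin{pmatrix}a&b\\c&d\end{pmatrix}$ has positive determinant and hence lies in $\GL_2^+(\Q)$ (achievable by rescaling a column by a nonzero rational). Then $\alpha\cdot\infty=\tfrac{a}{c}$ and $\alpha\cdot 0=\tfrac{b}{d}$, and the $\GL_2^+(\Q)$-invariance of $\Psi$ gives
$$\Psi\left\{\tfrac{a}{c},\tfrac{b}{d}\right\}=\Psi\{\alpha\cdot\infty,\alpha\cdot 0\}=\Psi\{\infty,0\}\,\big|\,\alpha^{-1}=(\alpha^{-1})^*\cdot\Psi\{\infty,0\}.$$
The problem thus reduces to computing the image of a power series with denominator $XY$ under the left action of $(\alpha^{-1})^*=\det(\alpha)^{-1}\alpha$ on $\widetilde{\cR}$.

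The last step is the substitution: this action replaces $X$ and $Y$ in the denominator by linear forms in $X,Y$ whose coefficients are the entries of $\alpha$ (scaled by the unit $\det(\alpha)^{-1}\in\Q^\times$), producing $(aX+bY)(cX+dY)$ up to a nonzero rational scalar. The only real obstacle is bookkeeping the adjugate and lining up the paper's conventions for the left action on $\widetilde{\cR}$ with the right action on distributions via $\mu|\gamma=\gamma^*\cdot\mu$; once these match, the lemma follows immediately.
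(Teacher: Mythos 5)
Your argument is exactly the paper's: the paper proves this lemma in one sentence, observing that $XY\,\Psi\{\infty,0\}(f)\in\Q[[X,Y]]$ and then invoking the $\GL_2^+(\Q)$-invariance of $\Psi$ together with the transitivity of $\GL_2^+(\Q)$ on pairs of cusps, which is precisely your base case plus transport. Your base case and the reduction $\Psi\left\{\tfrac{a}{c},\tfrac{b}{d}\right\}=(\alpha^{-1})^*\cdot\Psi\{\infty,0\}$ with $(\alpha^{-1})^*=\det(\alpha)^{-1}\alpha$ are fine. However, the one step you defer---the substitution ``bookkeeping''---is exactly where the content lies, and your asserted outcome of it is not what the paper's conventions give. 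With the action $\gamma\cdot F(X,Y)=F((X,Y)\gamma)$, the matrix $\det(\alpha)^{-1}\alpha$ sends $X\mapsto\det(\alpha)^{-1}(aX+cY)$ and $Y\mapsto\det(\alpha)^{-1}(bX+dY)$, so the denominator $XY$ is carried, up to a nonzero rational scalar, to $(aX+cY)(bX+dY)$: each cusp $\tfrac{a}{c}$ contributes the linear form $aX+cY$ whose coefficients are its homogeneous coordinates (these are the forms attached to the edges $(a,c)$, $(b,d)$ of the Shintani cone). It is not $(aX+bY)(cX+dY)$ as you claim; that is what the lemma prints, but the printed indices are evidently a transposition typo, since every subsequent use of this lemma in the paper (the vanishing-hypothesis section and the proof of the theorem on analytic families of modular symbols) writes $\Phi_{f'}\left\{\tfrac{a}{c},\tfrac{b}{d}\right\}=\frac{1}{(aX+cY)(bX+dY)}\cdot\mu$. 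So: right approach, essentially the paper's proof, but carrying out the computation you waved off would have shown that your final sentence (and the statement as printed, unless one reads the cusps as $\tfrac{a}{b}$ and $\tfrac{c}{d}$) pairs the cusps with the wrong linear forms.
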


Before stating the next lemma, let us observe that some test functions on $\Q^2$ factorize (with respect to our chosen basis) as the product of test functions on $\Q$. For example, $[m\Z\times n\Z]$ is the product of the test functions $[m\Z]$ and $[n\Z]$: $[m\Z\times m\Z](xe_1+ye_2)=[m\Z](x) [n\Z](y)$. If $f_1,f_2\in\cS(\Q)$ are test functions, we will write $f_1\times f_2$ for the function
\begin{equation*}
	(f_1\times f_2) (x,y) := f_1(x) f_2(y).
\end{equation*}
Since $f_1,f_2$ are supported on latices in $\Q$, the product $f_1\times f_2$ is supported on a lattice in $\Q^2$. Moreover, if $f_1,f_2$ are periodic with respect to the lattice $m\Z$, $n\Z$, then $f_1\times f_2$ is periodic with respect to $m\Z\times n\Z$. If $f$ can be factored as the product of two test functions on $\cS(\Q)$, say $f(x,y)=f_1(x)f_2(y)$, then
\begin{align*}
	\Psi\{\infty,0\}(f_1\times f_2)
	=\left(\frac{1}{1-e^{mX}}\sideset{}{'}\sum_{0\leq x< m}f_1(x)e^{xX}\right)\left(\frac{1}{1-e^{nY}}\sideset{}{'}\sum_{0\leq y< n}f_2(y)e^{yY}\right),
\end{align*}
the prime on the sums indicating weighting the $x=0$ and $y=0$ terms by $1/2$. 

Every test function $f\in\cS(V)$ is a finite sum of factorizable test functions $f_1\otimes f_2$. Writing $\xi\in\cD(\Q,\Q((X)))$ for the distribution coming from the $1$-dimensional cone of positive real numbers, the above observations show
\begin{lem}
The distribution $\Psi\{\infty,0\}$ factors as a product of distributions
\begin{equation}
	\Psi\{\infty,0\} = \xi_1\times\xi_2,
\end{equation}
with $\xi_1$ valued in $\Q((X))$, $\xi_2$ valued in $\Q((Y))$, and $\xi_1\times\xi_2(f_1\times f_2)=\xi_1(f_1)\xi_2(f_2).$
\end{lem}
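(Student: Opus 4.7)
The lemma essentially formalizes the explicit computation displayed in the paragraph immediately preceding it, so the plan breaks into three short steps.

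First, I would define the one-dimensional analog of the Shintani distribution, $\xi_1 \colon \cS(\Q) \to \Q((X))$, arising from the positive real ray. For $g \in \cS(\Q)$ periodic mod $m\Z$ and (after rescaling) supported in $\Z$, set
\begin{equation*}
\xi_1(g) := \frac{1}{1 - e^{mX}} \sideset{}{'}\sum_{0 \leq x < m} g(x) e^{xX},
\end{equation*}
which lies in $\Q((X))$ by the same power-series identity $(mX)(1-e^{mX})^{-1} \in \Q[[X]]$ that was invoked in the 2D construction of $\mu_C$. The same telescoping argument used for the 2D Shintani distribution verifies independence of the choice of period $m$. Define $\xi_2$ symmetrically in the variable $Y$.

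Second, I would check that every $f \in \cS(V)$ is a finite $\Z$-linear combination of factorizable test functions $f_1 \times f_2$. This is a routine lattice manipulation: given $f$ supported on a rank-$2$ lattice $L$ and periodic modulo a rank-$2$ lattice $L'$, one replaces $L'$ by an axis-aligned sublattice $m\Z e_1 + n\Z e_2 \subset L'$ (which exists since $L'$ has full rank in $\Q^2$, so meets both coordinate axes nontrivially) and $L$ by an axis-aligned superlattice $\tfrac{1}{N}\Z^2 \supset L$. After these refinements, $f$ is a $\Z$-linear combination of indicator functions of single cosets $(a+\tfrac{m}{N}\Z) \times (b+\tfrac{n}{N}\Z)$, each of which is manifestly factorizable.

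Third, define $\xi_1 \times \xi_2 \colon \cS(V) \to \widetilde{\cR}$ as the $\Z$-linear extension of the assignment $(f_1 \times f_2) \mapsto \xi_1(f_1)\,\xi_2(f_2)$. The computation immediately preceding the lemma statement shows that this extension agrees with $\Psi\{\infty,0\}$ on every factorizable test function. Since $\Psi\{\infty,0\}$ is \emph{a priori} a well-defined homomorphism on $\cS(V)$ by the preceding theorem, and factorizable functions generate $\cS(V)$ by the second step, this simultaneously proves that $\xi_1 \times \xi_2$ is independent of the chosen factorization of $f$ and that $\xi_1 \times \xi_2 = \Psi\{\infty,0\}$. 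There is no real obstacle; the argument is essentially bookkeeping around the identity already displayed in the text.
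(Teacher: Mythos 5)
Your proposal is correct and takes essentially the same route as the paper, which likewise treats the lemma as an immediate consequence of the displayed product formula for $\Psi\{\infty,0\}$ on factorizable test functions together with the observation that every test function is a finite sum of such, so your write-up simply fills in the routine details. One minor wording fix in your second step: the decomposition should be into indicators of cosets of the period lattice $m\Z\times n\Z$ (with representatives taken in the support lattice $\tfrac{1}{N}\Z^2$), not cosets of the finer lattice $\tfrac{m}{N}\Z\times\tfrac{n}{N}\Z$, since $f$ need not be periodic with respect to the latter.
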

%

\subsection{Locally polynomial distributions and the Fourier transform}
Viewing $x,y$ as coordinates on $V$, we regard $\cS(V)\otimes_\Z \Q[x,y]$ as a space of locally polynomial functions on $V$. We denote by
\begin{equation}
	\cD_{poly}(V,\Q):=\Hom_{\Q} (\cS(V)\otimes_\Z \Q[x,y],\Q)
\end{equation}
the space of locally polynomial distributions on $V$.For a locally polynomial function $f\otimes P$, we write write $\int_V f(v)P(v) d\mu(v):=\mu(f\otimes P)$.

Let us forget for a moment that $\Psi$ is valued in distributions valued in Laurent series and pretend that these distributions were valued in power series. We can identify the $\Q$-vector space $\Q[[X,Y]]$ with $\Q[x,y]^*$, the linear dual of the polynomial ring $\Q[x,y]$. This induces an isomorphism
\begin{align}
	\Hom_{\Q} (\cS(V)\otimes_\Z \Q[x,y],\Q)
	=& \Hom_{\Q}(\cS(V),\Q[x,y]^*)\\
	\cong& \Hom_{\Q}(\cS(V),\Q[[X,Y])
	=\cD(V,\Q[[X,Y]]).
\end{align}

The isomorphism $\cD_{poly}(V,\Q)\longrightarrow \cD(V,\Q[[X,Y]])$, which we call the ``Fourier transform" $\cF$, is explicitly described by
\begin{equation}
	\cF(\mu)(f) = \int_V f(x,y)\exp(xX+yY) d\mu(x,y):= \sum_{n,m\geq 0} \mu( f\otimes x^ny^m)\frac{X^nY^m}{n!m!}.
\end{equation}

We state the key properties of $\cF$. The proof is an easy computation which we omit.
\begin{prop}\label{P:Fourier}
The Fourier transform $\cF:\cD_{poly}(V,\Q)\longrightarrow\cD(V,\Q[[X,Y]])$ is an isomorphism of $\Q$-vector spaces. Moreover,
\begin{itemize}
\item[(i)] $\cF$ is $\GL_2(\Q)$-equivariant: $\cF(\mu|\gamma)=\cF(\mu)|\gamma$ for all $\gamma\in\GL_2(\Q)$.
\item[(ii)] $\cF(\partial_x\cdot\mu)=X\cF(\mu)$, $\cF(\partial_y \cdot \mu)=Y\cF(\mu)$
\item[(iii)] If $v$ has coordinates $a,b$, then $\cF(\delta_{v})=e^{aX+bY}\delta_{v}$.
\end{itemize} 
Here the action of $\partial_x, \partial_y$ on distributions is induced by the action on polynomials: $(\partial_x \cdot\mu)(f\otimes P):=\mu(f\otimes\partial_xP)$.
\end{prop}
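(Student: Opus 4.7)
The plan is to compactly rewrite the definition of $\cF$ as the formal identity
$$\cF(\mu)(f) = \mu\!\left(f\otimes e^{xX+yY}\right),$$
understood as the termwise sum $\sum_{n,m\geq 0}\mu(f\otimes x^n y^m)\frac{X^n Y^m}{n!m!}$ of honest finite quantities assembling into an element of $\Q[[X,Y]]$. Once this compact form is in place, all three properties follow because $e^{xX+yY}$ is an eigenfunction of the relevant operations.

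First I would prove invertibility by exhibiting an explicit inverse: the map sending $\nu\in\cD(V,\Q[[X,Y]])$ to the distribution $\mu$ defined on monomial tensors by $\mu(f\otimes x^ny^m) := n!\,m!\cdot[X^nY^m]\nu(f)$ and extended $\Q$-linearly in $\Q[x,y]$. That this is a two-sided inverse to $\cF$ is immediate. Next I would dispatch (ii) and (iii), both of which are one-line computations: (iii) reduces to $\cF(\delta_v)(f)=f(v)\,e^{v\cdot(X,Y)}$ by summing the monomial series; (ii) reduces to $\partial_x e^{xX+yY}=Xe^{xX+yY}$, so transposing the derivative across the pairing gives $X\cF(\mu)(f)=\cF(\partial_x\cdot\mu)(f)$, and similarly for $\partial_y$.

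The substantive step is the $\GL_2(\Q)$-equivariance (i). I would unravel the conventions: the right action on $\cD(V,\Q[[X,Y]])$ is $(\mu|\gamma)(f)=\gamma^*\cdot\mu(f|\gamma^*)$ with $\gamma^*=\det(\gamma)\gamma^{-1}$ acting on $\Q[[X,Y]]$ by $\gamma^*\cdot F(X,Y)=F((X,Y)\gamma^*)$, while the action on $\cD_{poly}(V,\Q)$ is induced from viewing $\cS(V)\otimes\Q[x,y]$ as locally polynomial functions on $V$. The verification then collapses to the single matrix identity
$$(\gamma^*v)\cdot(X,Y) \;=\; v\cdot\bigl((X,Y)\gamma^*\bigr),$$
which is just associativity of matrix multiplication after identifying $v$ with a column and $(X,Y)$ with a row. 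Applying this inside the exponential kernel and invoking the compact formula for $\cF$ matches both sides of $\cF(\mu|\gamma)=\cF(\mu)|\gamma$ monomial by monomial.

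The main obstacle is not mathematical but notational: juggling left versus right $\GL_2$-actions, column versus row conventions, and the twist by $\gamma^*$ rather than $\gamma$ itself (present precisely to preserve integrality, per the earlier remark). Beyond this bookkeeping, the only genuine subtlety is justifying the manipulation $\cF(\mu)(f)=\mu(f\otimes e^{xX+yY})$ by interpreting the right-hand side as a formal power series whose $(n,m)$-coefficient is the honest evaluation $\mu(f\otimes x^n y^m)/(n!\,m!)$. Once this formalism is pinned down, the proposition is the routine exercise the author alludes to.
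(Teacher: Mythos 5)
Your proposal is correct, and it carries out precisely the routine verification the paper alludes to when it says ``the proof is an easy computation which we omit'': the explicit coefficientwise inverse, the eigenfunction identities $\partial_x e^{xX+yY}=Xe^{xX+yY}$ and $\cF(\delta_v)(f)=f(v)e^{aX+bY}$, and the reduction of $\GL_2(\Q)$-equivariance (with both actions twisted by $\gamma^*$) to the associativity identity $(\gamma^* v)\cdot(X,Y)=v\cdot\bigl((X,Y)\gamma^*\bigr)$ all check out against the paper's conventions. No gaps; this is the intended argument.
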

The (inverse) Fourier transform realizes power-series-valued distributions as linear functionals on the locally polynomial functions of $V$. These, in turn, can be interpreted as linear functionals on $p$-adic polynomial functions of compact support. In the best of cases, these distributions are continuous, allowing one to obtain locally \emph{analytic} $p$-adic distributions. Of course, the values of the Shintani cocycle are distributions in $\cD(V,\widetilde{R})$,--the poles corresponding to the poles of Shintani zeta functions. Therefore, we must allow for poles in our locally polynomial distributions. This can be achieved using an ideas of Stevens, localizing $\cD_{poly}$ with respect to rational differential operators.

We let $\Q[X,Y]$ act on $\cD_{poly}$ by putting $P(X,Y)\cdot\mu:=P(\partial_x,\partial_y)\mu$. The space of \emph{locally polynomial distributions with rational poles} is defined as
\begin{equation}
	\widetilde{\cD}_{poly}(V):=S^{-1}\widetilde{\cD}_{poly}(V)=\cD_{poly}(V)\otimes_{\Q[X,Y]} S^{-1}\Q[X,Y],
\end{equation}
where again, $S$ is the multiplicative subset generated by non-zero linear forms.
\begin{lem}
The Fourier transform induces an isomorphism
\begin{equation}
	\cF:\Symb_{\GL_2^+}(\widetilde{\cD}_{poly}(V)/\delta_0)\longrightarrow \Symb_{\GL_2^+}(\cD(V,\widetilde{\cR})/\delta_0).
\end{equation}
\end{lem}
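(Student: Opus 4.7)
The plan is to extend the Fourier transform $\cF$ from the preceding Proposition to the localized setting, verify that the extension is $\GL_2^+(\Q)$-equivariant and sends $\delta_0$ to $\delta_0$, and then descend to modular symbols by functoriality.

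First, I would use property (ii) of the Proposition to observe that $\cF$ intertwines the $\Q[X,Y]$-action on $\cD_{poly}(V,\Q)$ by differential operators (via $X\mapsto \partial_x,\ Y\mapsto \partial_y$) with the $\Q[X,Y]$-action on $\cD(V,\Q[[X,Y]])$ by multiplication. Hence $\cF$ is an isomorphism of $\Q[X,Y]$-modules, and localizing both sides at the multiplicative subset $S$ of nonzero linear forms yields a $\Q$-linear isomorphism
\[
\widetilde{\cF}\colon \widetilde{\cD}_{poly}(V)=S^{-1}\cD_{poly}(V,\Q)\ \xrightarrow{\ \sim\ }\ S^{-1}\cD(V,\Q[[X,Y]]).
\]

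Second, I would identify $S^{-1}\cD(V,\Q[[X,Y]])$ with $\cD(V,\widetilde{\cR})$. The natural map $Q^{-1}\mu\mapsto (f\mapsto \mu(f)/Q)$ is evidently well-defined. For the inverse, write $\cS(V)=\varinjlim_L \cS_L(V)$ as a directed union over lattices $L$, where each $\cS_L(V)$ is finitely generated as a $\Z$-module; for a finitely generated module, localization commutes with $\Hom$, so $S^{-1}\Hom(\cS_L(V),\Q[[X,Y]])\cong \Hom(\cS_L(V),\widetilde{\cR})$. Given $\nu\in\cD(V,\widetilde{\cR})$ and a finite generating set of $\cS_L$, choose a common denominator $Q_L\in S$ for the finitely many values of $\nu$; the resulting system is compatible under the inclusions $\cS_L\hookrightarrow \cS_{L'}$ and assembles to a preimage in $S^{-1}\cD(V,\Q[[X,Y]])$.

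Third, the extended $\widetilde{\cF}$ is $\GL_2^+(\Q)$-equivariant because $\cF$ is and because $\GL_2(\Q)$ acts on $\Q[X,Y]$ by linear substitutions, hence preserves $S$. Property (iii) of the Proposition gives $\cF(\delta_0)=\delta_0$, so $\widetilde{\cF}$ descends to a $\GL_2^+(\Q)$-equivariant isomorphism of quotients $\widetilde{\cD}_{poly}(V)/\delta_0\to \cD(V,\widetilde{\cR})/\delta_0$. Since $\Symb_{\GL_2^+(\Q)}(-)=\Hom_{\Z}(\Delta_0,-)^{\GL_2^+(\Q)}$ is a functor, it preserves this isomorphism, giving the claimed result.

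The main obstacle is the second step. Localization does not in general commute with infinite products or inverse limits, so the identification $S^{-1}\cD(V,\Q[[X,Y]])\cong \cD(V,\widetilde{\cR})$ is not formal; it depends crucially on the fact that $\cS(V)$ is a directed union of finitely generated $\Z$-modules, which lets us bound denominators one lattice at a time and patch them across the direct system. Everything else is either inherited from the (already proved) unlocalized Proposition or is pure functoriality.
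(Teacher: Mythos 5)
Your first and third steps are exactly the paper's argument: $\Q[X,Y]$-linearity of $\cF$ (property (ii)) to pass to localizations, $\cF(\delta_0)=\delta_0$ (property (iii)) to descend to the quotient, and equivariance plus functoriality of $\Symb_{\GL_2^+}$ to conclude. You have also correctly isolated the one non-formal point, which the paper's own proof passes over in silence: identifying $S^{-1}\Hom_\Z(\cS(V),\Q[[X,Y]])$ with $\Hom_\Z(\cS(V),\widetilde{\cR})$.

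However, your resolution of that point does not work as stated. Writing $\cS(V)=\varinjlim_L\cS_L(V)$ and choosing a denominator $Q_L\in S$ for each finitely generated piece only produces a compatible system in $\varprojlim_L S^{-1}\Hom_\Z(\cS_L(V),\Q[[X,Y]])\cong\varprojlim_L\Hom_\Z(\cS_L(V),\widetilde{\cR})=\cD(V,\widetilde{\cR})$ -- i.e.\ you just recover $\nu$ -- whereas an element of $S^{-1}\cD(V,\Q[[X,Y]])$ requires a \emph{single} $Q\in S$ clearing all denominators of $\nu$ at once, and nothing forces the $Q_L$ to stabilize. Indeed the natural map $S^{-1}\cD(V,\Q[[X,Y]])\to\cD(V,\widetilde{\cR})$ is injective but not surjective at the module level: a homomorphism sending a $\Z$-basis of $\cS(V)$ to $1/(X-c_iY)$ for infinitely many distinct $c_i$ admits no common denominator, since a fixed product of linear forms is divisible in $\Q[[X,Y]]$ by only finitely many non-proportional linear forms. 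So surjectivity of the map on modular symbols needs a uniform-denominator input rather than abstract patching. In the paper this is exactly what Lemma \ref{L:pole_desc} supplies: for a fixed divisor $\left\{\frac{a}{c},\frac{b}{d}\right\}$ the values of the relevant symbols have the single denominator $(aX+cY)(bX+dY)$ as $f$ ranges over $\cS(V)$, so each value lies in the image of the localization; combined with the injectivity your argument does establish, this is what makes Corollary \ref{C:def} (existence and uniqueness of $\Phi$ with $\cF(\Phi)=\Psi$) go through.
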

\begin{proof}
Proposition \ref{P:Fourier}, (ii) says the Fourier transform is a $\Q[X,Y]$-homomorphism, so it extends to an isomorphism of the localizations $\cF:\widetilde{\cD}_{poly}(V)\longrightarrow\cD(V,\widetilde{\cR})$. The third property shows $\cF(\delta_0)=\delta_0$, so we have an isomorphism on the quotient $\cF:\widetilde{\cD}_{poly}(V)/\delta_0\longrightarrow \cD(V,\widetilde{\cR})/\delta_0$, and the $\GL_2$-equivariance induces the isomorphism on modular symbols.
\end{proof}
It follows that we can realize the Shintani modular symbol as taking values in polynomial distributions with rational poles:
\begin{cor}\label{C:def}
There exists a unique modular symbol $\Phi\in\Symb_{\GL_2}(\widetilde{\cD}_{poly}(V)/\delta_0)$ such that $\cF(\Phi)=\Psi$.
\end{cor}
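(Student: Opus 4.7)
The statement is an immediate formal consequence of the lemma proved just above, so my plan is essentially to quote it. Specifically, the preceding lemma produces an isomorphism
\[
\cF:\Symb_{\GL_2^+(\Q)}(\widetilde{\cD}_{poly}(V)/\delta_0)\longrightarrow \Symb_{\GL_2^+(\Q)}(\cD(V,\widetilde{\cR})/\delta_0),
\]
so I would simply \emph{define} $\Phi:=\cF^{-1}(\Psi)$. Existence follows from the surjectivity of $\cF$ on modular symbols, and uniqueness follows from its injectivity.

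To justify that invoking the lemma is legitimate, I would walk back through the two inputs it uses. First, Proposition on the Fourier transform gives a $\Q[X,Y]$-module isomorphism $\cD_{poly}(V,\Q)\to\cD(V,\Q[[X,Y]])$ which is $\GL_2(\Q)$-equivariant; localizing both sides at the multiplicative set $S$ of nonzero linear forms extends this to a $\GL_2(\Q)$-equivariant isomorphism $\widetilde{\cD}_{poly}(V)\to\cD(V,\widetilde{\cR})$. Second, the identity $\cF(\delta_0)=\delta_0$ from part (iii) of the Proposition (taking $a=b=0$) ensures that $\cF$ descends to the quotients by $\delta_0$. Taking $\Gamma$-invariant homomorphisms from $\Delta_0$ then transports this isomorphism to the modular symbol level, giving exactly the map whose inverse we apply to $\Psi$.

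There is no real obstacle here; the only thing worth being careful about is the bookkeeping of the two actions of $\GL_2^+(\Q)$. The source module $\widetilde{\cD}_{poly}(V)/\delta_0$ inherits its right action via the same adjugate convention $\mu|\gamma := \gamma^\ast\cdot\mu$ as on the target, so equivariance of $\cF$ with respect to the left action (Proposition, (i)) automatically yields equivariance for the right actions used in the modular symbol definitions. Once this is noted, the corollary follows verbatim from the lemma, and the characterization $\cF(\Phi)=\Psi$ is built into the construction.
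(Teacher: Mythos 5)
Your proposal is correct and matches the paper's own treatment: the corollary is exactly the observation that the preceding lemma's isomorphism $\cF$ on modular symbols lets one set $\Phi:=\cF^{-1}(\Psi)$, with existence and uniqueness coming from bijectivity, and your recap of the lemma's ingredients (localization at $S$, $\cF(\delta_0)=\delta_0$, and equivariance under the adjugate right action) is the same justification the paper gives.
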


\subsection{Modules of $p$-adic distributions}
Fix a prime $p$, and let $U$ be a compact open subspace of $\Q_p$ or $\Q_p^2$. In this section, we recall the definitions of the distribution spaces we will use. Our main reference is \cite{PolSte11}.
\begin{itemize}
	\item $B[U,r]:=\{z\in W_p\widehat\otimes\C_p ~|~ \exists a\in U \text{ s.t.} |z-a|\leq r\}.$
	\item $A[U,r]:=$ the $\Q_p$-Banach algebra of rigid analytic functions on $B[U,r]$
		{whose taylor expansions on $U$ have $\Q_p$-coefficients.}
	\item $P[U,r]:=$ the subspace in $A[U,r]$ of polynomial functions.
	\item $D[U,r]:=${ the Banach dual of $A[U,r]$}.
\end{itemize}
For each $r>s>0$, restriction induces a completely continuous homomorphism $A[U,r]\longrightarrow A[U,s]$. The space of locally analytic functions on $U$ is naturally identified with the injective limit $\cA(U)=\varinjlim_{r>0} A[U,r]$ endowed with the inductive limit topology. We also define, for fixed $s$, the overconvergent analytic functions $\cA^\dagger(U,s)=\varinjlim_{r>s} A[U,r]$ and endow it with the inductive limit topology. Within the space of locally analytic functions, we have the dense subspace of \emph{locally polynomial} functions, $\LP(U)$, the union of the $P[U,r]$.

The locally analytic distributions, overconvergent distributions, and locally polynomial distributions are defined by
\begin{itemize}
\item $\cD(U):=\Hom_{cts}(\cA(U),\Q_p)=\varprojlim_{s>0}D(U,s)$
\item $\cD^\dagger(U,r):=\Hom_{cts}(\cA^\dagger(U,r),\Q_p)=\varprojlim_{s>r} D[U,r]$
\item $\cD_{poly}(U):=\Hom_{\Q_p}(\LP(U),\Q_p)$.
\end{itemize}
\begin{rem}
Following \cite{PolSte11}, abbreviate $\cD^\dagger(\Z_p,1)$ by $\cD^\dagger(\Z_p)$ or $\cD^\dagger$ and $D[\Z_p,1]$ by $\bD[\Z_p]$ or $\bD$.
\end{rem}

If $U\subset \Q_p^2$, the differential operators $\partial_x,\partial_y$ act on $\LP(U)$ and $\cA(U)$ in the usual way. We endow these spaces with a $\Q[X,Y]$-action by $P(X,Y)\cdot f(x,y):=P(\partial_x,\partial_y)f(x,y)$, thereby defining an action of $\Q[X,Y]$ on $\cD(U)$ and $\cD_{poly}(U)$. Just as in the case of global distributions, define the corresponding spaces of $p$-adic distributions with rational poles by
\begin{equation}
	\widetilde{\cD}(U):=S^{-1}\cD(U) \text{ and }\widetilde{\cD}_{poly}(U):=S^{-1}\cD_{poly}(U).
\end{equation}

\begin{rem}
Because we are viewing $\Q_p^2$ as column vectors, we have a left action of $\GL_2(\Q_p)$ on $\Q_p^2$, and thus a right action on function spaces and a left action on distribution modules. Again, we endow our distribution modules with a right action via $\mu|\gamma:=\gamma^*\cdot \mu$. If $\Gamma\subset\GL_2(\Q_p)$ has the property that $u|\gamma\in U$ for all $\gamma\in \Gamma$, $u\in U$, then  $\Gamma$ acts on $\cD(U)$. In particular, the semigroup $\Sigma_0(p)$ acts on $\cD(\Z_p^2)$ and $\cD(\Z_p\times\Z_p^\times)$.
\end{rem}

\subsection{$p$-adic distributions from global distributions}
Now we pass from global polynomial distributions to $p$-adic polynomial distributions on $\Z_p^2\subset V_p$. Put $\cS(\Z_p^2)\subset\cS(V_p)$ for the group of test functions supported on $\Z_p^2$. 
\begin{lem}\label{localpolyid}
The space of locally polynomial functions $\LP(\Z_p^2)$ is isomorphic to $\cS(\Z_p^2)\otimes_\Z\Q_p[x,y]$.
\end{lem}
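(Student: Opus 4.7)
The plan is to construct a natural evaluation map
\[
\iota:\cS(\Z_p^2)\otimes_\Z \Q_p[x,y]\longrightarrow \LP(\Z_p^2),\qquad \iota(f\otimes P)(v):=f(v)P(v),
\]
and verify it is a bijection by reducing both sides to their description in terms of residue disks of level $n$. That $\iota$ lands in $\LP(\Z_p^2)$ is immediate: if $f$ is constant on residue disks of some level $n$, then on each such disk $\alpha+p^n\Z_p^2$ the function $fP$ is the polynomial $f(\alpha)P(x,y)$, so $\iota(f\otimes P)\in P[\Z_p^2,p^{-n}]\subset\LP(\Z_p^2)$. By linearity this extends to arbitrary tensors.

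For surjectivity, I would take $\varphi\in\LP(\Z_p^2)$ and use the definition $\LP(\Z_p^2)=\bigcup_{n\geq 0}P[\Z_p^2,p^{-n}]$. Fixing $n$ with $\varphi\in P[\Z_p^2,p^{-n}]$, the affinoid $B[\Z_p^2,p^{-n}]$ is a disjoint union of residue disks indexed by $\alpha\in\Z_p^2/p^n\Z_p^2$, so $\varphi$ is given by a polynomial $Q_\alpha\in\Q_p[x,y]$ on each $\alpha+p^n\Z_p^2$. Then
\[
\varphi(v)=\sum_{\alpha\in \Z_p^2/p^n\Z_p^2} \mathbf{1}_{\alpha+p^n\Z_p^2}(v)\,Q_\alpha(v),
\]
so $\varphi=\iota\bigl(\sum_\alpha \mathbf{1}_{\alpha+p^n\Z_p^2}\otimes Q_\alpha\bigr)$.

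For injectivity, suppose $\iota(\sum_i f_i\otimes P_i)=0$. Choose $n$ large enough that every $f_i$ is constant on level-$n$ residue disks and expand $f_i=\sum_\alpha f_i(\alpha)\,\mathbf{1}_{\alpha+p^n\Z_p^2}$. Regrouping,
\[
\sum_i f_i\otimes P_i=\sum_\alpha \mathbf{1}_{\alpha+p^n\Z_p^2}\otimes R_\alpha,\qquad R_\alpha:=\sum_i f_i(\alpha)P_i\in\Q_p[x,y],
\]
and the image under $\iota$ equals the function that agrees with $R_\alpha$ on each $\alpha+p^n\Z_p^2$. Vanishing of this function on the (Zariski-dense) open set $\alpha+p^n\Z_p^2\subset\Q_p^2$ forces $R_\alpha=0$ in $\Q_p[x,y]$ for every $\alpha$, hence the original tensor is zero.

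There is no real obstacle here; the argument is essentially a matching of inductive systems. The only point that merits care is verifying that the transition maps defining $\LP(\Z_p^2)=\varinjlim_n P[\Z_p^2,p^{-n}]$ and those defining the analogous filtration of $\cS(\Z_p^2)\otimes_\Z\Q_p[x,y]$ by level-$n$ residue-disk decompositions are compatible with $\iota$, which amounts to the trivial identity $\mathbf{1}_{\alpha+p^n\Z_p^2}=\sum_{\beta\to\alpha}\mathbf{1}_{\beta+p^{n+1}\Z_p^2}$.
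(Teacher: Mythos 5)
Your proof is correct and follows essentially the same route as the paper: the paper likewise uses the product map $f\otimes P\mapsto fP$, asserts its injectivity, and obtains surjectivity by writing a locally polynomial function as a finite sum of polynomials restricted to compact opens (your residue-disk decomposition is just a specific choice of such opens). The only difference is that you spell out the injectivity step, via the regrouping over disks and Zariski density, which the paper leaves as an assertion.
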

\begin{proof}
The product map $f(x,y)\otimes P(x,y)\mapsto f(x,y)P(x,y)$ is injective and identifies $\cS(U)\otimes\Q[x,y]$ as a subset of $\LP(\Z_p^2)$. Moreover, every locally polynomial function $F$ on $\Z_p^2$ is the sum of finitely many polynomial functions $P_i$ restricted to compact open subsets of $V_i\subset \Z_p^2$, extended by zero. The sum $\sum [V_i]\otimes P_i$ realizes $F$ as an element of $\cS(U)\otimes\Q_p[x,y]$.
\end{proof}

A test function $f'\in\cS(V^{(p)})$ induces a homomorphism
\begin{align*}
	\LP(\Z_p^2)\cong\cS(\Z_p^2)\otimes\Q[x,y]& \longrightarrow\cS(V)\otimes\Q[x,y],\\
	f_p\otimes P& \mapsto (f_p\otimes f')\otimes P.
\end{align*}

Dually, we have a homomorphism $\cD_{poly}(V)\longrightarrow \cD_{poly}(\Z_p^2)$ which is concretely given by sending a distribution $\mu$ to the map $\mu_{f'}: P \mapsto \mu(f'\otimes[\Z_p^2]\otimes P)$.

Localizing with respect to $S$, we record the following lemma.
\begin{lem}\label{L:local_from_global}
For each $f'\in\cS(V^{(p)})$, there is a $\Q$-linear map 
\begin{equation}
\widetilde{\cD}_{poly}(V)\longrightarrow\widetilde{\cD}_{poly}(\Z_p^2).
\end{equation}
\end{lem}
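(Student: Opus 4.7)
The plan is to observe that the unlocalized map $\mu\mapsto \mu_{f'}$ constructed immediately before the lemma is not merely $\Q$-linear but actually $\Q[X,Y]$-linear, so that tensoring with $S^{-1}\Q[X,Y]$ over $\Q[X,Y]$ produces the desired map on the localizations by pure functoriality.

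First I would record the $\Q[X,Y]$-equivariance explicitly. Recall that $\Q[X,Y]$ acts on $\cD_{poly}(V)$ and on $\cD_{poly}(\Z_p^2)$ via the differential operators $P(X,Y)\cdot\mu := P(\partial_x,\partial_y)\mu$, where by definition $(\partial_x\cdot\mu)(f\otimes Q) = \mu(f\otimes\partial_x Q)$ and similarly for $\partial_y$; in particular, the differential operators touch only the polynomial tensor factor, not the test function. Given $f'\in\cS(V^{(p)})$, the map
\begin{equation*}
\cS(\Z_p^2)\otimes\Q[x,y]\longrightarrow \cS(V)\otimes\Q[x,y],\qquad f_p\otimes Q\mapsto (f_p\otimes f')\otimes Q,
\end{equation*}
acts as the identity on the $Q$-factor, so the dual map $\mu\mapsto \mu_{f'}$ commutes with $\partial_x$ and $\partial_y$: a one-line check gives $(P(X,Y)\cdot \mu)_{f'} = P(X,Y)\cdot \mu_{f'}$ for every $P\in\Q[X,Y]$.

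Once this equivariance is in hand, extending to the localizations is automatic. Since both $\cD_{poly}(V)$ and $\cD_{poly}(\Z_p^2)$ are $\Q[X,Y]$-modules and $S\subset \Q[X,Y]$ is the multiplicative set of non-zero linear forms, applying $-\otimes_{\Q[X,Y]} S^{-1}\Q[X,Y]$ to the $\Q[X,Y]$-linear map produces a $\Q$-linear (in fact $S^{-1}\Q[X,Y]$-linear) map
\begin{equation*}
\widetilde{\cD}_{poly}(V)=S^{-1}\cD_{poly}(V)\longrightarrow S^{-1}\cD_{poly}(\Z_p^2)=\widetilde{\cD}_{poly}(\Z_p^2),
\end{equation*}
exactly as required.

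There is really no serious obstacle here; the only point of substance is verifying that the partial-differentiation action commutes with the operation of tacking on the fixed tame test function $f'$, which is immediate because the action is trivial on the test-function slot. I would also remark in passing that this map will later be used to transport the Shintani symbol $\Phi$ of Corollary \ref{C:def} through to $\Symb_{\Gamma_{f'}}(\widetilde{\cD}_{poly}(\Z_p^2)/\delta_0)$, which explains why $\Q$-linearity (rather than the stronger $S^{-1}\Q[X,Y]$-linearity we have actually proved) is all that gets recorded in the statement.
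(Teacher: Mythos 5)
Your proposal is correct and matches the paper's own (largely implicit) argument: the paper constructs the dual map $\mu\mapsto\mu_{f'}$ and simply says ``localizing with respect to $S$,'' which is exactly your route, with the $\Q[X,Y]$-equivariance of $\mu\mapsto\mu_{f'}$ being the detail that justifies the localization step. Your explicit check that the differential operators act only on the polynomial slot, and hence commute with tacking on $f'\otimes[\Z_p^2]$, is precisely what the paper leaves to the reader.
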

We will write $\mu_{f'}$ for the image of $\mu$ under the map induced by $f'$.
%
%
\subsection{The $p$-adic Shintani modular symbol}
Applying the previous sections to the values of modular symbol $\Phi$ gives $p$-adic polynomial distributions with rational poles. In this section, we show that the values of $\Phi$ extend to locally analytic distributions with rational poles.

\begin{thm}\label{T:is_analytic}
For all $f'\in\cS(V^{(p)})$, $\Phi\{\infty,0\}_{f'}\in\widetilde{\cD}(\Z_p^2)\subset\widetilde{\cD}_{poly}(\Z_p^2)$.
\end{thm}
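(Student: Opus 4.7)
Plan. The strategy is to reduce to a one-dimensional statement via the factorization of $\Psi\{\infty,0\}$, then check local analyticity of the resulting $1$-dimensional Bernoulli-type distribution by a direct moment computation.

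\emph{Pole removal and reduction to one variable.} By Lemma~\ref{L:pole_desc}, $XY\cdot\Psi\{\infty,0\}(f)\in\Q[[X,Y]]$ for all $f\in\cS(V)$, and Proposition~\ref{P:Fourier}(ii) then gives $\partial_x\partial_y\cdot\Phi\{\infty,0\}_{f'}\in\cD_{poly}(\Z_p^2)$ without rational poles. In the localization $\widetilde{\cD}(\Z_p^2)=S^{-1}\cD(\Z_p^2)$, it therefore suffices to show that this pole-free polynomial distribution extends to an element of $\cD(\Z_p^2)$. Expanding $f'$ into factorizable test functions $f'_1\otimes f'_2$ and applying $\Psi\{\infty,0\}=\xi_1\times\xi_2$, the task further reduces to the one-variable statement: for $g'=[b+N\hat\Z^{(p)}]\in\cS(\Q^{(p)})$ with $p\nmid N$ and $0<b<N$, show that the polynomial distribution $\eta$ on $\Z_p$ with moment generating function
\[
X\cdot\xi_1([b+N\Z])(X)=\frac{X\,e^{bX}}{1-e^{NX}}
\]
lies in $\cD(\Z_p)$. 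Reading off coefficients via $\frac{t e^{xt}}{e^t-1}=\sum_n B_n(x)\,t^n/n!$ gives $\eta(x^n)=-B_n(b/N)\,N^{n-1}$.

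\emph{Local analyticity via Bernoulli moments.} To verify $\eta\in\cD(\Z_p)$, I check that for every $a\in\Z_p$ and every $k\geq 0$ the rescaled local moments
\[
\eta_{a,k}(y^n):=p^{-kn}\,\eta\bigl([a+p^k\Z_p]\cdot(x-a)^n\bigr)
\]
are uniformly bounded in $n$. Restricting $\eta$ to $a+p^k\Z_p$ corresponds via CRT (using $p\nmid N$) to replacing $[b+N\Z]$ with $[c+Np^k\Z]$ for $c$ the unique residue mod $Np^k$ satisfying $c\equiv b\pmod N$ and $c\equiv a\pmod{p^k}$. Expanding $(x-a)^n$ binomially and applying the Bernoulli translation identity $B_n(x+h)=\sum_j\binom{n}{j}B_j(x)h^{n-j}$ telescopes the sum to $\eta_{a,k}(y^n)=-N^{n-1}p^{-k}B_n(d/N)$, where $d=(c-a)/p^k\in\Z$. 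Since $p\nmid N$, the rational $d/N$ is a $p$-integer, so von~Staudt--Clausen bounds $|B_n(d/N)|_p\leq p$ uniformly in $n$, yielding $|\eta_{a,k}(y^n)|_p\leq p^{k+1}$ independent of $n$. Tensor products of locally analytic distributions are locally analytic, so this recovers the $2$-dimensional claim.

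\emph{Main obstacle.} The hardest part is the local-analyticity check in the one-variable case. Global boundedness of $\eta(x^n)$ does not imply boundedness on subdisks---the $p^{-kn}$ rescaling factor threatens exponential blowup in $n$. The saving cancellation is buried in the Bernoulli translation formula, which consolidates all $n$-dependent $k$-factors into a single harmless $p^{-k}$. The hypothesis $p\nmid N$, automatic from our choice $f'\in\cS(V^{(p)})$, is essential both for CRT to produce a single coset $[c+Np^k\Z]$ and for von~Staudt--Clausen to apply to $B_n(d/N)$; a minor bookkeeping point is that working modulo $\delta_0$ throughout absorbs the $b=0$ case whose only contribution is a scalar multiple of the $\delta_0$ at the origin.
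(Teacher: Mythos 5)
Your proof is correct, but it takes a genuinely different route from the paper's. The paper also starts from Lemma \ref{L:pole_desc} to clear the pole $XY$, but then finishes in one stroke with the Amice transform: it computes
$\cA(XY\mu_{f'})(q_1,q_2)=\frac{\log(q_1)}{1-q_1^m}\frac{\log(q_2)}{1-q_2^n}\sum_{(x,y)\in\cP}(f'\otimes[\Z_p^2])(x,y)\,q_1^xq_2^y$,
where the period lattice $m\Z\times n\Z$ can be chosen with $m,n$ $p$-adic units because $f'\otimes[\Z_p^2]$ is $\Z_p^2$-periodic at $p$; it then observes that $\frac{\log(q)}{1-q^{m}}=\frac{1-q}{1-q^{m}}\cdot\frac{\log(q)}{1-q}$ is rigid analytic on the open disc $B(1,1)$ precisely because $m$ is a unit, and invokes the Amice--V\'elu theorem identifying $\cD(\Z_p^2)$ with analytic functions on $B(1,1)^2$. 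You instead reduce to one variable through the factorization $\Psi\{\infty,0\}=\xi_1\times\xi_2$ and verify membership in $\cD(\Z_p)$ by hand: explicit Bernoulli-polynomial moments, restriction to residue discs via CRT (using $p\nmid N$), the translation identity to telescope the rescaled local moments to $-N^{n-1}p^{-k}B_n(d/N)$, and von Staudt--Clausen for the uniform bound. Both arguments hinge on the same arithmetic input---the prime-to-$p$ period---deployed differently: invertibility of $\frac{1-q^m}{1-q}$ on $B(1,1)$ in the paper, versus CRT plus $d/N\in\Z_p$ in your estimate. The paper's route is shorter and stays in two variables; yours is more elementary and more quantitative, showing the pole-free part has local moments of size $O(p^{k})$ on discs of radius $p^{-k}$, i.e.\ it is essentially $D_xD_y$ of a pair of measures, which is consistent with the factorization into $D_x\xi_{f'_1}\times D_y\xi_{f'_2}$ that the paper exploits later when computing the $p$-adic $L$-functions.

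Two points to tighten, neither fatal. First, a general factor test function in $\cS(\Q^{(p)})$ is a $\Z$-linear combination of cosets $[\beta+N\hat{\Z}^{(p)}]$ where $N$ is only a positive \emph{rational} $p$-adic unit and $\beta$ a rational that can be taken in $\Z_{(p)}$ but need not be an integer in $[0,N)$; your computation goes through verbatim in that generality (the paper makes the same normalization in choosing $m,n\in\Q^\times$ to be $p$-adic units), but your stated reduction to integral $0<b<N$ is slightly too narrow. Second, the criterion you use---uniform boundedness in $n$ of the rescaled local moments at each fixed scale $p^{-k}$ implies membership in $\cD(\Z_p)=\varprojlim_s D[\Z_p,s]$---is correct but is the unstated lemma doing the work: it deserves a sentence noting that polynomials are dense in each $A[\Z_p,p^{-k}]$ and that there are only finitely many discs at each scale, so your bound gives continuity for each Banach norm in the projective system.
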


The main idea behind the proof is the famous theorem of Amice and Velu. Before stating their theorem, we setup notation. Given an analytic distribution $\mu\in\cD(\Z_p^2)$, we can define a function $\cA(\mu)$ on the open polydisk $B(1,1)^2$ by
\begin{equation*}
	\cA(\mu)(q_1,q_2):=\int_{\Z_p^2} q_1^xq_2^y d\mu(x,y),
\end{equation*}
where we are using the fact that $x\mapsto q^x$ defines an analytic function on $\Z_p$. The theorem of Amice and Velu (extended to two variables) says that $\cA(\mu)$ is an analytic function on the polydisk $B(1,1)^2$ defined over $\Q_p$ and that all such analytic functions arise in this way.
 
\begin{thm}[Amice-Velu \cite{AmVe}]
The Amice transform $\cA:\cD(\Z_p^2)\longrightarrow \cA_\Q(B(1,1)^2)$ is an isomorphism of $\Q_p$-Frech\'et spaces.
\end{thm}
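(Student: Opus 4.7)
The plan is to establish the two-variable Amice–Vélu theorem essentially by bootstrapping from the one-variable case of \cite{AmVe}, using the kernel expansion $q^x = \sum_{n \geq 0} \binom{x}{n}(q-1)^n$ to identify both sides with a common space of coefficient sequences. Since $\cD(\Z_p^2)$ and $\cA_\Q(B(1,1)^2)$ are both naturally expressible as completed tensor products of two copies of their one-variable analogues, the heart of the argument is a direct check against the basis of shifted binomial polynomials $\binom{x}{n}\binom{y}{m}$.

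First, I would recall that for each $r>0$, the polynomials $\binom{x}{n}\binom{y}{m}$ form an orthogonal basis of $A[\Z_p^2,r]$ with norm decaying geometrically in $n+m$ at a rate determined by $r$; this is the one-variable Amice basis result applied in each coordinate. Dually, an element of the Banach dual $D[\Z_p^2,r]$ is determined by its moments $c_{n,m}(\mu) := \mu\bigl(\binom{x}{n}\binom{y}{m}\bigr)$, with $|c_{n,m}(\mu)|$ controlled by the matching growth bound. Taking the projective limit as $r \to 0^+$ characterises $\cD(\Z_p^2)$ as the space of sequences $(c_{n,m})$ whose growth in $n+m$ is slower than any prescribed geometric rate.

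Second, I would make the Amice transform explicit: substituting the Mahler expansions of $q_1^x$ and $q_2^y$ and integrating term by term (legitimate because $\mu$ is continuous on each $A[\Z_p^2,r]$), one obtains
\begin{equation*}
\cA(\mu)(q_1,q_2) \;=\; \sum_{n,m \geq 0} c_{n,m}(\mu)\,(q_1-1)^n(q_2-1)^m.
\end{equation*}
A power series in $(q_1-1,q_2-1)$ defines an analytic function on the open bidisk $B(1,1)^2$ precisely when $|a_{n,m}|\rho^{n+m} \to 0$ for every $\rho<1$, which matches the characterisation of coefficient sequences arising from $\cD(\Z_p^2)$. This gives a bijection between $\cD(\Z_p^2)$ and $\cA_\Q(B(1,1)^2)$ at the level of coefficients.

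To upgrade to an isomorphism of Fréchet spaces, I would match the two projective systems: for appropriately paired radii $r$ and $\rho$ (with $\rho \to 1^-$ as $r \to 0^+$), the transform induces a bi-continuous Banach-space isomorphism $D[\Z_p^2,r] \cong A_\Q(B[(1,1),\rho])$, and passing to the projective limit yields the Fréchet isomorphism. The main technical obstacle is the careful bookkeeping of these norm correspondences; the cleanest conceptual route is to observe that both $\cD(\Z_p^2)$ and $\cA_\Q(B(1,1)^2)$ are completed tensor products of two copies of their one-variable counterparts, so that the desired isomorphism follows by applying the one-variable Amice–Vélu theorem twice and invoking the exactness of the completed tensor product on these nuclear Fréchet spaces.
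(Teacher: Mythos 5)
The paper offers no proof of this statement: it is quoted as a theorem of Amice--V\'elu \cite{AmVe} (extended to two variables) and used as a black box, with only the surrounding remarks explaining how $\cA$ relates to $\cF$ via $\exp_p$ and how $\cA$ extends to $\cD_{poly}(\Z_p^2)$. Your proposal therefore cannot be compared to an argument in the text; judged on its own, it is the standard proof and is essentially correct. The identification of $\cD(\Z_p^2)$ with moment sequences $c_{n,m}(\mu)=\mu\bigl(\binom{x}{n}\binom{y}{m}\bigr)$ growing slower than any geometric rate, the term-by-term integration of $q_1^xq_2^y=\sum\binom{x}{n}\binom{y}{m}(q_1-1)^n(q_2-1)^m$ (valid because for fixed $|q_i-1|<1$ the series converges in $A[\Z_p^2,r]$ for $r$ small enough), and the matching with Taylor coefficients of functions on the open bidisk together give the bijection; the Fr\'echet statement then follows either from the paired Banach isomorphisms $D[\Z_p^2,r]\cong A_\Q(B[(1,1),\rho])$ or from writing both sides as completed tensor squares of their one-variable counterparts. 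Two small cautions: the norms $\|\binom{x}{n}\|_r$ on $A[\Z_p,r]$ \emph{grow} geometrically in $n$ (at a rate tending to $1$ as $r\to 0^+$) rather than decay --- it is the coefficients in the expansion of an element of $A[\Z_p,r]$ that must decay --- though your final characterisation of the moment sequences is the correct one; and in the tensor-product route what you need is not ``exactness'' of $\widehat\otimes$ but the identifications $\cD(\Z_p^2)\cong\cD(\Z_p)\widehat\otimes_{\Q_p}\cD(\Z_p)$ and $\cA_\Q(B(1,1)^2)\cong\cA_\Q(B(1,1))\widehat\otimes_{\Q_p}\cA_\Q(B(1,1))$, which hold because these are nuclear Fr\'echet (resp.\ their preduals are compact inductive limits) but do require justification beyond the one-variable theorem.
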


\begin{rem}We can use the $p$-adic exponential function to identify the disk $B(0,p^{-1/p-1})^2$ with $B(1,1)^2$ by $(X,Y)\mapsto (\exp_p(X),\exp_p(Y))$. Under this identification, the Amice transform becomes
\begin{equation*}
	\cA(\mu)(\exp_p(X,Y))=\int_{\Z_p^2} \exp_p(X)^x\exp_p(Y)^yd\mu(x,y)=\int_{\Z_p^2}e^{xX+yY}d\mu(x,y),
\end{equation*}
which, formally, is our map $\cF$ pushed forward to $\cD(\Z_p^2)$.
\end{rem}
\begin{rem}
Expanding $q_1^xq_2^y=\sum_{n,m\geq 0} {x \choose n}{y \choose m} (q_1-1)^n(q_2-1)^m$, we can extend $\cA$ to a homomorphism $\cD_{poly}(\Z_p^2)\longrightarrow\Q_p[[q_1-1,q_2-1]]$, where now $\cA(\mu)$ represents a formal power series which may or may not converge on $B(1,1)^2$.
\end{rem}
Armed with these observations, we are ready for the proof of Theorem \ref{T:is_analytic}.
\begin{proof}
Fix $f'\in\cS(V^{(p)})$ and let $\mu_{f'}=\Phi\{\infty,0\}_{f'}$. Since $XY\Psi\{\infty,0\}(f)\in\Q[[X,Y]]$, $XY\Phi\{\infty,0\}\in\cD_{poly}(V)$ and we conclude $XY\mu_{f'}\in\cD_{poly}(\Z_p^2)$. We compute the Amice transform of $XY\cdot \mu_{f'}$ and recognize it as an analytic function on $\cB(1,1)^2$, whence $XY\cdot\mu_{f'}\in\cD(\Z_p^2)$ and $\mu_{f'}\in\widetilde{\cD}(\Z_p^2)$. 
On the polydisk about $(0,0)$, the Amice transform is
\begin{align*}
	\cA(XY\mu_{f'})(\exp_p(X,Y))&=\int_{\Z_p^2}e^{xX+yY}d (XY\cdot\mu_{f'})\\
	&=\int_{V} f'\otimes[\Z_p^2]e^{xX+yY} d(XY\cdot\Phi\{\infty,0\})\\
	&=\cF(XY\Phi\{\infty,0\})(f'\otimes[\Z_p^2])\\
	&=XY\Psi\{\infty,0\}(f'\otimes[\Z_p^2]).
\end{align*}
Note that $f'\otimes[\Z_p^2]$ is periodic at $p$ with respect to $\Z_p^2$, so we may take as a period lattice of $f'\otimes[\Z_p^2]$ $m\Z\times n\Z$ with $m,n\in\Q^\times$ $p$-adic units. Using equation \ref{E:explicit} for $\Psi\{\infty,0\}(f'\otimes[\Z_p^2])$, we are left with
\begin{equation*}
	\cA(XY\mu_{f'})=\Psi\{\infty,0\}(f)=\frac{X}{1-e^{mX}}\frac{Y}{1-e^{nY}} \sum_{ (x,y)\in\cP}f'\otimes[\Z_p^2](x,y)e^{xX+yY}.
\end{equation*}
In the variables $q_1,q_2$, this becomes
\begin{equation*}
	\cA(XY\mu_{f'})(q_1,q_2)= \frac{\log(q_1)}{1-q_1^m}\frac{\log(q_2)}{1-q_2^n}\sum_{(x,y)\in\cP}f'\otimes[\Z_p^2](x,y)q_1^xq_2^y.
\end{equation*}

Now the sum is a finite number of non-zero terms, each with $x,y\in\Z_p$. Thus, each summand is an analytic function on the open polydisk $B(1,1)^2$. We are reduced to showing the that $\log(q_1)/(1-q_1^m)$ and $\log(q_2)/(1-q_2^n)$ define analytic functions on $B(1,1)^2$. We use crucially that $m,n$ are $p$-adic units, which implies $\frac{1-q_1^m}{1-q_1}$, $\frac{1-q_2^n}{1-q_2}$ are units on $B(1,1)^2$. 
\begin{equation*}
	\log(q)/(q-1)=\frac{1}{q^m-1}\sum_{n\geq 1} \frac{(-1)^n}{n} (q-1)^n\frac{(-1)^n}{n}=-\sum_{n\geq 0} \frac{(-1)^{n+1}}{n+1} (q-1)^n.
\end{equation*}
The above power series converges for all $q\in\C_p$ such that $|q-1|_p<1$, so $\log(q)/1-q$ is an analytic function on the open unit ball $B(1,1)$. Therefore $\log(q_1)/(1-q_1^m)$ and $\log(q_2)(1-q_2^n)$ are analytic functions on $B(1,1)^2$. Thus, $XY\cdot\mu_{f'}\in\cD(\Z_p^2)$ by the theorem of Amice-Velu.
\end{proof}

\begin{lem}\label{L:S0_action}
If $\gamma\in\GL_2(\Q)$ fixes $\Z_p^2$, then
\begin{equation*}
	\Phi_{f'}|\gamma=\Phi_{f'|\gamma^{*-1}}.
\end{equation*}	
\end{lem}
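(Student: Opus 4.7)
The proof is a direct formal computation: evaluate both sides on an arbitrary divisor $\{r,s\} \in \Delta_0$ and a locally polynomial test input $f_p \otimes P \in \cS(\Z_p^2) \otimes \Q[x,y] \cong \LP(\Z_p^2)$, and then invoke the $\GL_2^+(\Q)$-invariance of the global Shintani symbol $\Phi$ supplied by Corollary \ref{C:def}. The hypothesis $\gamma\Z_p^2 = \Z_p^2$ is equivalent to $\gamma^* = \det(\gamma)\gamma^{-1}$ preserving $\Z_p^2$, which is precisely what is needed for the right action $\nu\mid\gamma := \gamma^*\cdot\nu$ to restrict to $\widetilde{\cD}_{poly}(\Z_p^2)$ and, dually, to preserve $\cS(\Z_p^2)$.

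First, I would unwind the definition of the specialization $\mu \mapsto \mu_{f'}$ from Lemma \ref{L:local_from_global}, exploiting that the $\GL_2(\Q)$-action is diagonal on the factorization $\cS(V) \otimes \Q[x,y] = \cS(\Z_p^2) \otimes \cS(V^{(p)}) \otimes \Q[x,y]$. This yields
\begin{align*}
(\Phi_{f'}\mid\gamma)\{r,s\}(f_p \otimes P)
&= \Phi_{f'}\{\gamma r,\gamma s\}\bigl((f_p\mid\gamma^*) \otimes (P\mid\gamma^*)\bigr)\\
&= \Phi\{\gamma r,\gamma s\}\bigl((f_p\mid\gamma^*) \otimes f' \otimes (P\mid\gamma^*)\bigr).
\end{align*}
Next, the $\GL_2^+(\Q)$-invariance of $\Phi$ reads $\Phi\{\gamma r,\gamma s\}(g) = \Phi\{r,s\}(g\mid\gamma^{*-1})$ for every global input $g$. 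Applying this with $g = (f_p\mid\gamma^*) \otimes f' \otimes (P\mid\gamma^*)$, and using the diagonal action so that $\gamma^*$ followed by $\gamma^{*-1}$ cancels on the $\cS(\Z_p^2)$ and $\Q[x,y]$ factors and leaves the action solely on the away-from-$p$ factor, the previous display becomes
\begin{equation*}
\Phi\{r,s\}\bigl(f_p \otimes (f'\mid\gamma^{*-1}) \otimes P\bigr) = \Phi_{f'\mid\gamma^{*-1}}\{r,s\}(f_p \otimes P).
\end{equation*}
Since $\{r,s\}$, $f_p$, and $P$ are arbitrary, this identifies the two modular symbols.

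The only substantive bookkeeping is tracking the adjugate convention $\mid\gamma = \gamma^*\cdot$ consistently through both the test-function factors and the polynomial factor (which corresponds to power-series coefficients via the Fourier isomorphism of Proposition \ref{P:Fourier}); once this is set up, all the cancellations are purely formal, and the only genuine input is the $\GL_2^+(\Q)$-equivariance of $\Phi$ together with the hypothesis that $\gamma$ fixes $\Z_p^2$. The one caveat is that for $\gamma$ of negative determinant one lies outside the invariance group $\GL_2^+(\Q)$ of $\Phi$, so the argument as written applies to $\gamma \in \GL_2^+(\Q)$ fixing $\Z_p^2$, which is exactly the case used in the subsequent $\Gamma$-invariance arguments of \S3.
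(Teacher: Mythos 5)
Your proposal is correct and is essentially the paper's own argument: both unwind the specialization $\mu\mapsto\mu_{f'}$ on a locally polynomial test input, use that $\gamma^*$ fixes $\Z_p^2$ so the at-$p$ factor is unchanged under the diagonal action, and then invoke the $\GL_2^+(\Q)$-invariance of $\Phi$ to shift the action onto $f'$. Your remark about negative determinant is a fair caveat (the paper later handles $\iota$ with an explicit sign twist), but it does not change the substance of the proof.
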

\begin{proof}
Fix $\gamma\in\GL_2(\Q)$, $D\in\Delta_0$, and $P\in\Q[X,Y]$. We apply the distribution $\Phi(\gamma\cdot D)_{f'}|\gamma$ to $P$:
\begin{align*}
	\Phi(\gamma\cdot D)_{f'}|\gamma(P)&=\Phi(\gamma\cdot D)_{f'} (P|\gamma^*)\\
							&=\Phi(\gamma\cdot D) (f'\otimes[\Z_p^2]\otimes P|\gamma^*)\\
							&=\Phi(\gamma\cdot D)( (f'|(\gamma^*)^{-1}\otimes[\Z_p^2]\otimes P)|\gamma^*)\\
							&=(\Phi(\gamma\cdot D)|\gamma)(f'|(\gamma^*)^{-1}\otimes[\Z_p^2]\otimes P)\\
							&=\Phi(D)(f'|(\gamma^*)^{-1}\otimes[\Z_p^2]\otimes P)\\
							&=\Phi(D)_{f'|(\gamma^*)^{-1}}(P).
\end{align*}
As $P$, $D$ were arbitrary, we conclude $\Phi_{f'}|\gamma = \Phi_{f'|\gamma^{*-1}}$.
\end{proof}

Restriction to $\SL_2(\Z)$, where $\gamma^*=\gamma^{-1}$, we record a corollary:
\begin{lem}\label{L:SL_action}
For all $\gamma\in \SL_2(\Z)$,
\begin{equation*}
	\Phi_{f'}|\gamma=\Phi(D)_{f'|\gamma}
\end{equation*}
\end{lem}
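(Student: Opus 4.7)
The plan is to deduce this directly from Lemma \ref{L:S0_action} by specializing to matrices in $\SL_2(\Z)$. First I would verify the hypothesis of Lemma \ref{L:S0_action}: any $\gamma \in \SL_2(\Z)$ lies in $\GL_2(\Z_p)$ and hence preserves the lattice $\Z_p^2 \subset \Q_p^2$. Moreover, $\SL_2(\Z) \subset \GL_2(\Q)$, so the previous lemma applies and gives
\begin{equation*}
\Phi_{f'}|\gamma \;=\; \Phi_{f'|\gamma^{*-1}}.
\end{equation*}

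Next I would compute the adjugate. By definition $\gamma^* = \det(\gamma)\gamma^{-1}$, and since $\det\gamma = 1$ for $\gamma \in \SL_2(\Z)$, we have $\gamma^* = \gamma^{-1}$, hence $\gamma^{*-1} = \gamma$. Substituting this into the formula above yields $\Phi_{f'}|\gamma = \Phi_{f'|\gamma}$, which is the claimed identity (interpreting the right-hand side of the lemma, where $\Phi(D)_{f'|\gamma}$ evaluated on a divisor $D$ is $\Phi(D)_{f'|\gamma}$).

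There is essentially no obstacle here; the content of the lemma is just the observation that the $*$-involution on $\GL_2$ restricts to inversion on $\SL_2$, which trivializes the twist by $\gamma^{*-1}$ appearing in Lemma \ref{L:S0_action}. The only point to double-check is that the action of $\SL_2(\Z)$ on $\cS(V^{(p)})$ via $f'\mapsto f'|\gamma$ is well-defined, which it is because $\SL_2(\Z)$ acts on $V$ and hence on $\cS(\bA_{V}^{(\infty,p)})$ through its image in $\GL_2(\Q)$.
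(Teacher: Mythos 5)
Your proposal is correct and is exactly the paper's argument: the paper records this lemma as an immediate consequence of Lemma \ref{L:S0_action}, using precisely the observation that $\gamma^*=\gamma^{-1}$ (hence $\gamma^{*-1}=\gamma$) for $\gamma\in\SL_2(\Z)$, and your check that such $\gamma$ preserves $\Z_p^2$ is the same (implicit) verification of the hypothesis.
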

Now fix $f'\in\cS(V^{(p)})$, and let $\Gamma_{f'}\subset \SL_2(\Z)$ be the stabilizer of $f'\otimes[\Z_p^2]\in\cS(V)$. 
\begin{defn}
Define
\begin{equation*}
\Phi_{f'}:\Delta_0\longrightarrow \widetilde{\cD}_{poly}(\Z_p^2)/\delta_0
\end{equation*}
by $\Phi_{f'}(D):=\Phi(D)_{f'}$. By the above Lemma, $\Phi_{f'}|\gamma=\Phi_{f'}$ for all $\gamma\in \Gamma_{f'}$; therefore, $\Phi_{f'}$ is a $\Gamma_{f'}$ modular symbol valued in $\widetilde{\cD}_{poly}(\Z_p^2)/\delta_0$. In fact,
\end{defn}
\begin{thm}\label{T:analytic_modular_symbol}
For each $f'\in\cS(V^{(p)})$, $\Phi_{f'}$ is valued in analytic distributions with rational poles, $\widetilde{\cD}(\Z_p^2)/\delta_0$. That is, $\Phi_{f'}\in\Symb_{\Gamma_{f'}}(\widetilde{\cD}(\Z_p^2)/\delta_0)$.
\end{thm}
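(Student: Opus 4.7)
My plan is to reduce the statement to Theorem~\ref{T:is_analytic} via the standard Manin trick, using the $\SL_2(\Z)$-equivariance recorded in Lemma~\ref{L:SL_action}. We already know from Corollary~\ref{C:def} together with Lemma~\ref{L:local_from_global} that $\Phi_{f'}$ takes values in $\widetilde{\cD}_{poly}(\Z_p^2)/\delta_0$; only the upgrade from locally polynomial to locally analytic (with rational poles) on \emph{arbitrary} divisors remains.

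The key observation is that the $\SL_2(\Z)$-translates of $\{0,\infty\}$ generate $\Delta_0$ as a $\Z$-module. Every divisor decomposes as a sum of pieces $\{r,s\}=\{r,\infty\}+\{\infty,s\}$, and for a reduced-form cusp $r=a/c$ the classical Manin/continued-fraction argument writes $\{\infty,a/c\}$ as a finite $\Z$-linear combination of divisors $\gamma_i\cdot\{0,\infty\}$ with $\gamma_i\in\SL_2(\Z)$: taking $\gamma=\begin{pmatrix}a & b\\ c & d\end{pmatrix}\in\SL_2(\Z)$ one has $\gamma\{0,\infty\}=\{b/d,a/c\}$, and Euclidean induction on $|c|$ completes the decomposition.

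For each $\gamma\in\SL_2(\Z)$, Lemma~\ref{L:SL_action} gives
\[
\Phi_{f'}(\gamma\{0,\infty\})=\Phi_{f'|\gamma}(\{0,\infty\})\,\bigl|\,\gamma^{-1}.
\]
Since $\gamma$ is integral and invertible away from $p$, one has $f'|\gamma\in\cS(V^{(p)})$, so Theorem~\ref{T:is_analytic} places $\Phi_{f'|\gamma}(\{0,\infty\})$ in $\widetilde{\cD}(\Z_p^2)/\delta_0$.

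The remaining point --- and the only genuinely delicate one --- is to check that the right action of $\gamma^{-1}$ preserves $\widetilde{\cD}(\Z_p^2)/\delta_0$. Because $\gamma\in\SL_2(\Z)\subset\GL_2(\Z_p)$ carries $\Z_p^2$ isomorphically to itself by a rigid-analytic change of coordinates, precomposition preserves $\cA(\Z_p^2)$, so the dual action preserves $\cD(\Z_p^2)$. The induced action of $\gamma$ on polynomial differential operators is a linear change of variables that permutes the multiplicative set $S$ of nonzero linear forms, so it passes to the localization $\widetilde{\cD}(\Z_p^2)$; and $\delta_0$ is fixed, so the action descends further to the quotient by $\delta_0$. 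Combining these observations gives $\Phi_{f'}(D)\in\widetilde{\cD}(\Z_p^2)/\delta_0$ for every $D$ of the form $\gamma\{0,\infty\}$, and by $\Z$-linearity and the Manin decomposition for every $D\in\Delta_0$. The serious work is really done in Theorem~\ref{T:is_analytic} itself; once that Amice-type computation is in hand, the theorem here is a formal consequence.
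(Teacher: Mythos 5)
Your argument is correct and is essentially the paper's own proof: both reduce to divisors $\gamma\{\infty,0\}$ with $\gamma\in\SL_2(\Z)$, apply Lemma~\ref{L:SL_action} to rewrite $\Phi_{f'}(\gamma\{\infty,0\})$ as $(\Phi_{f'|\gamma}\{\infty,0\})|\gamma^{-1}$, invoke Theorem~\ref{T:is_analytic}, and use that $\SL_2(\Z)$ acts on $\widetilde{\cD}(\Z_p^2)/\delta_0$. The only difference is that you spell out the Manin-trick generation of $\Delta_0$ and the well-definedness of the $\SL_2(\Z)$-action on the localization, points the paper leaves implicit.
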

\begin{proof}
It suffices to show that $\Phi_{f'}(\gamma\{\infty,0\})\in\widetilde{\cD}(\Z_p^2)$ for all $\gamma\in\SL_2(\Z)$. Lemma \ref{L:SL_action} tells us $\Phi_{f'}(\gamma\{\infty,0\})|\gamma=\Phi_{f'|\gamma}\{\infty,0\}$, so $\Phi_{f'}(\gamma\{\infty,0\})=(\Phi_{f'|\gamma}\{\infty,0\})|\gamma^{-1}$. Theorem \ref{T:is_analytic} tells us $\Phi_{f'|\gamma}\{\infty,0\}\in\widetilde{\cD}(\Z_p^2)/\delta_0$. Since $\SL_2(\Z)$ acts on $\widetilde{\cD}(\Z_p^2)$, we conclude that $\Phi_{f'}(\gamma\{\infty,0\})\in\widetilde{\cD}(\Z_p^2)/\delta_0$.
\end{proof}

\subsection{The Vanishing Hypothesis}
In \cite{Ste1}, we described  conditions which guaranteed specializations of the Shintani cocycle produce measures. This \emph{vanishing hypothesis}, stated in terms of lines in $V$ can be reformulated in terms of the cusps $\P^1(\Q)=\P(V)$.

\begin{defn}
We say $f'\in\cS^{(p)}$ satisfies the vanishing hypothesis for a cusp $\frac{a}{b}\in\P^1(\Q)$, if for all vectors $w\in\Q^2$, the test function $x\mapsto f'\left(w +x{a \choose b}\right)\in\cS(\Q^{(p)})$ has Haar measure $0$.
\end{defn}
For a fixed $f'$, we will say a cusp $r\in\P^1(\Q)$ is \emph{good} for $f'$ if $f'$ satisfies the vanishing hypothesis for $s$.

It follows immediately from Lemma \ref{L:pole_desc} that, for all $\frac{a}{c},\frac{b}{d}\in\P^1(\Q)$, there exists $\mu\left\{\frac{a}{c},\frac{b}{d}\right\}\in\cD(\Z_p^2)$ such
\begin{equation*}
 	\Phi_{f'}\left\{\frac{a}{c},\frac{b}{d}\right\}= \frac{1}{(aX+cY)(bX+dY)}\cdot\mu\left\{\frac{a}{c},\frac{b}{d}\right\}
\end{equation*}

\begin{thm}\label{T:pole_description}
If $f'$ is good for $\frac{a}{c}$ or $\frac{b}{d}$, then there exists $\nu\in\cD(\Z_p^2)$ such that $\mu\left\{\frac{a}{c},\frac{b}{d}\right\}=(aX+cY)\cdot \nu$ or $(bX+dY)\cdot \nu$, respectively. In particular, $\Phi_{f'}\left\{\frac{a}{c},\frac{b}{d}\right\}\in\cD(\Z_p^2)$ if $f'$ is good for both cusps.
\end{thm}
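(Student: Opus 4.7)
The plan is to pass to the Amice transform side, reduce via the $\SL_2(\Z)$-equivariance of $\Phi_{f'}$ to a standard divisor of the form $\{\infty, r/s\}$, and then translate the vanishing hypothesis into the vanishing of $\cA(\mu\{\infty, r/s\})$ along $q_1 = 1$. Since $\gcd(a,c)=1$, choose $\gamma\in\SL_2(\Z)$ with first column $(a,c)^T$; then $\gamma\cdot\infty=a/c$, and setting $(r,s):=\gamma^{-1}(b,d)^T$, Lemma \ref{L:SL_action} gives
\begin{equation*}
\Phi_{f'}\{a/c,b/d\} = \Phi_{f'|\gamma}\{\infty,r/s\}\big|\gamma^{-1}.
\end{equation*}
Unwinding definitions, $f'$ is good for $a/c$ iff $f'|\gamma$ is good for $\infty$ (substitute $w\mapsto\gamma w$ in the vanishing hypothesis along the direction $(a,c)^T=\gamma e_1$). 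The right $|\gamma^{-1}$ action on $\widetilde{\cR}$ is the left $\gamma$ action, which sends $X\mapsto aX+cY$ and $rX+sY\mapsto bX+dY$. It therefore suffices to prove: if $f'$ is good for $\infty$, then $X$ divides $\mu\{\infty,r/s\}$ in $\cD(\Z_p^2)$.

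\textbf{Main computation.} Choose positive rationals $m, n$ which are $p$-adic units such that both $m(1,0)$ and $n(r,s)$ are periods of $f:=f'\otimes[\Z_p^2]$. Mimicking the cone-sum calculation from the proof of Theorem \ref{T:is_analytic}, but with the cone $C((1,0),(r,s))$ in place of the first quadrant, one obtains
\begin{equation*}
\cA(\mu\{\infty,r/s\})(q_1,q_2) = \frac{\log q_1}{1-q_1^m}\cdot\frac{r\log q_1+s\log q_2}{1-q_1^{nr}q_2^{ns}}\cdot\sideset{}{'}\sum_{v\in P'}f(v)\,q_1^{v_1}q_2^{v_2},
\end{equation*}
where $P'$ is the parallelogram with vertices $0,m(1,0),n(r,s),m(1,0)+n(r,s)$ with Shintani half-weightings on the edges through the origin. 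Substituting $u=r\log q_1+s\log q_2$ and $q=q_1^rq_2^s$ turns the second factor into $\log q/(1-q^n)$, which is an analytic unit on $B(1,1)^2$ by the argument from the proof of Theorem \ref{T:is_analytic}; similarly the first factor is analytic. Setting $q_1=1$ collapses the exponential sum to $\sum_y\bigl(\sum_{x:(x,y)\in P'}f(x,y)\bigr)q_2^y$; for each fixed $y$, the $x$-slice of $P'$ is an interval of length exactly $m$ (one full $x$-period of $f$), so the inner sum equals a nonzero constant times the Haar integral $\int_{\Q^{(p)}}f'((0,y)+x(1,0))\,dx$. The goodness of $f'$ for $\infty$ forces this to vanish for every $y$, so $\cA(\mu\{\infty,r/s\})$ vanishes along $q_1=1$. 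Since $X=\log q_1$ differs from $q_1-1$ by an analytic unit on $B(1,1)$, the quotient $\cA(\mu\{\infty,r/s\})/X$ is analytic on $B(1,1)^2$, and Amice-Velu supplies $\nu\in\cD(\Z_p^2)$ with $\mu\{\infty,r/s\}=X\nu$; reversing the $\gamma$-reduction yields the first assertion.

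\textbf{Both cusps and main obstacle.} For the ``in particular'' statement, apply the first part to each cusp to obtain $\mu\{a/c,b/d\}=(aX+cY)\nu_1=(bX+dY)\nu_2$ with $\nu_1,\nu_2\in\cD(\Z_p^2)$. On Amice transforms, $\cA(\mu\{a/c,b/d\})$ vanishes along each of the two transverse analytic curves $aX+cY=0$ and $bX+dY=0$ in $B(1,1)^2$; hence $\cA(\nu_1)=\cA(\mu)/(aX+cY)$ vanishes along $\{bX+dY=0\}$ off their (isolated) intersection, and extends to vanish along the full curve by analyticity. Thus $(bX+dY)$ divides $\nu_1$ in $\cD(\Z_p^2)$, which is exactly the statement $\Phi_{f'}\{a/c,b/d\}\in\cD(\Z_p^2)$. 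The principal obstacle is the vanishing step: one must match the combinatorics of the tilted parallelogram $P'$ (particularly the Shintani half-weightings on its edges through the origin) against the Haar-integral interpretation of the vanishing hypothesis, checking that each $y$-slice of $P'$ captures exactly one $x$-period of $f$ with consistent endpoint weights.
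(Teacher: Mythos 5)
Your overall strategy is sound, and it is genuinely more self-contained than the paper, which at this point simply defers to the proofs of Proposition 4.14 and Theorem 4.15 of \cite{Ste1}: the $\SL_2(\Z)$-reduction to $\{\infty,r/s\}$, the transfer of the vanishing hypothesis under $f'\mapsto f'|\gamma$, the tilted-cone Amice computation, and the identification of the $q_1=1$ specialization of the exponential sum with Haar measures of line-restrictions (including the endpoint-weight bookkeeping you flag) are all correct in outline. However, the key divisibility step contains a genuine error. Divisibility of $\mu\{\infty,r/s\}$ by $X=\partial_x$ in $\cD(\Z_p^2)$ corresponds on the Amice side to divisibility of $\cA(\mu\{\infty,r/s\})$ by $\log q_1$, and your justification --- ``$\log q_1$ differs from $q_1-1$ by an analytic unit on $B(1,1)$'' --- is false: $\log q_1$ vanishes at every $p$-power root of unity $\zeta_{p^j}$, while $q_1-1$ vanishes only at $q_1=1$, so $\log q_1/(q_1-1)$ is analytic but not a unit. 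For the same reason $\log q/(1-q^n)$ is not a unit (only analytic). Consequently ``$\cA(\mu)$ vanishes along $q_1=1$, hence $\cA(\mu)/X$ is analytic'' is not a valid inference in general, and the same imprecision recurs in your ``both cusps'' paragraph, where the zero locus of $a\log q_1+c\log q_2$ is not a single curve but the union of the curves $q_1^aq_2^c=\zeta_{p^j}$, $j\geq 0$.

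Fortunately the repair is already contained in your displayed formula: $\log q_1$ occurs there as an \emph{explicit} factor, so instead of dividing the whole expression by $X$ via the false unit claim, write
\begin{equation*}
\cA(\mu\{\infty,r/s\})=\log q_1\cdot\left[\frac{G(q_1,q_2)}{1-q_1^m}\cdot\frac{\log(q_1^rq_2^s)}{1-(q_1^rq_2^s)^n}\right],
\end{equation*}
where $G$ is the finite exponential sum, and show the bracket is analytic on $B(1,1)^2$. The second factor is analytic exactly as in the proof of Theorem \ref{T:is_analytic}; for the first, the zero locus of $1-q_1^m$ in $B(1,1)$ is precisely $\{q_1=1\}$ with a simple zero (here the \emph{true} unit statement, that $(1-q_1^m)/(1-q_1)$ is a unit because $m$ is a $p$-adic unit, is what you need), and your Haar-measure argument shows $G$ vanishes along $q_1=1$, so $G/(1-q_1^m)$ is analytic. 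Then Amice--Velu gives $\nu'$ with $\mu\{\infty,r/s\}=X\cdot\nu'$, and the $\gamma$-transport goes through as you say. In the ``both cusps'' step you should likewise work with $\log(q_1^aq_2^c)$ and $\log(q_1^bq_2^d)$ and note that the relation $(aX+cY)\nu_1=(bX+dY)\nu_2$ forces $\cA(\nu_1)$ to vanish on \emph{every} component of the zero locus of $\log(q_1^bq_2^d)$ (off the countable intersection, then by continuity), after which the division argument on closed polydisks, using that these zeros are simple, should be stated rather than left implicit.
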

\begin{proof}
This follows from the proofs of Proposition 4.14 and Theorem 4.15 of \cite{Ste1}.
\end{proof}

\section{The weight-$k$ specialization map}
In this section, we specialize the Shintani modular symbol to $\Symb_{\Gamma}(\cD^{\dagger}_k(\Z_p))$ by constructing $\Gamma_p$-equivariant maps from $\widetilde{\cD}(\Z_p^2)$ to Stevens' distribution modules $\cD_k(\Z_p)$ (see \cite{PolSte11}). It turns out that this construction will be simple for $k<0$, but non-trivial for non-negative $k$. Here $\cD_k(\Z_p)$ is the distribution module $\cD(\Z_p)$ a weight-$k$ action of the semigroup
\begin{equation*}
	\Sigma_0(p):=\left\{ \begin{pmatrix} a & b\\ c & d\end{pmatrix}\in M_2(\Z_p) : a\in\Z_p^\times, c\in p\Z_p, \text{ and } ad-bc\neq 0\right\}.
\end{equation*}
Fixing $k\in\Z$, $\Sigma_0(p)$ acts on the left on $\cA(\Z_p)$ by
\begin{equation*}
	\gamma\cdot_k f(z) = (a+cz)^k f\left(\frac{b+dz}{a+cz}\right)
\end{equation*}
for all  $\gamma=\begin{pmatrix} a & b\\ c & d\end{pmatrix}\in\Sigma_0(p)$. This induces a right action of $\Sigma_0(p)$ on $\cD(\Z_p)$ by $(\mu|_k \gamma)(f)=\mu(\gamma\cdot_k f)$. Our convention will be to write $\cA_k$, $\cA^\dagger_k$, $\cD^\dagger_k$, etc. to denote the corresponding modules with the above weight-$k$ actions. 

\subsection{Differential forms on wide opens}
Denote by $\cH_p$ the $p$-adic upper half-plane $\P^1(\C_p)-\P^1(\Q_p)$, endowed with the \emph{left} $\GL_2(\Q_p)$ action
\begin{equation*}
\gamma\cdot z= [(x,y)\gamma^*]= \frac{-b+az}{d-cz}.
\end{equation*}

Let $\cZ\subset\cH_p$ be the wide open $\cZ:=\{z\in\C_p : 1<|z|<p\}.$ Let $A(\cZ)$ the $\Q_p$-algebra of rigid analytic functions on $\cZ$. The space of K\"ahler differentials $\Omega(\cZ)$ on $\cZ$ is an $A(\cZ)$-module, and each of these spaces can be explicitly described in terms of the Laurent series that converge on $\cZ$. 
\begin{prop}\label{P:diffs}
Each analytic function $f\in A(\cZ)$ can be uniquely represented by a Laurent series 
\begin{equation*}
	f=\sum_{n\geq 0} a_n z^{-n} +\sum_{m> 0} b_m p^m z^m
\end{equation*}
Similarly, the differential forms $\omega\in\Omega_{}(\cZ)$ can be written uniquely as
\begin{equation} \label{E:diff_series}
	\omega = \sum_{n\geq 0} a_n z^{-n}\frac{dz}{z} +\sum_{m> 0} b_m p^m z^m \frac{dz}{z}
\end{equation}
Such an expression represents an analytic function (or differential form) if and only if the coefficients satisfy the following growth condition: For each $t>1$ $|a_k|=o(t^k)$ and $|b_{-k}|=o^(t^k)$ as $k\rightarrow\infty$
\end{prop}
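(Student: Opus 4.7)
The plan is to realize $\cZ$ as an increasing union of closed affinoid annuli and apply the standard Laurent expansion result on each, noting that the normalization by $p^m$ is precisely what makes the two convergence conditions symmetric.

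Concretely, for any pair of real radii $1<r<s<p$, set $\cZ_{r,s}=\{z\in\C_p:r\leq|z|\leq s\}$. The standard theory of affinoid algebras gives a unique expansion
\begin{equation*}
	f=\sum_{n\in\Z} c_n z^n \in A(\cZ_{r,s}),
\end{equation*}
characterized by $|c_n|s^n\to 0$ as $n\to+\infty$ and $|c_n|r^n\to 0$ as $n\to-\infty$; moreover the supremum norm is the Gauss norm $\max_n |c_n|\max(r^n,s^n)$. Since $\cZ=\bigcup_{r>1,s<p}\cZ_{r,s}$, restriction gives a compatible family of expansions whose uniqueness on each overlap forces a single formal Laurent series representing $f\in A(\cZ)$. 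The function is analytic on $\cZ$ if and only if it is analytic on every $\cZ_{r,s}$, which by the above amounts to $|c_n|s^n\to0$ for every $s<p$ and $|c_{-k}|r^{-k}\to 0$ for every $r>1$.

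Now relabel $a_k:=c_{-k}$ for $k\geq 0$ and $b_m:=p^{-m}c_m$ for $m>0$; the Laurent series becomes $\sum_{n\geq 0}a_n z^{-n}+\sum_{m>0}b_m p^m z^m$, and uniqueness is preserved. For $k\to\infty$, the condition $|c_{-k}|r^{-k}\to0$ for every $r>1$ is exactly $|a_k|=o(t^k)$ for every $t>1$. For $m\to\infty$, using $|p|_p=p^{-1}$ we have $|c_m|s^m=|b_m|(s/p)^m$, and as $s$ ranges over $(1,p)$ the quotient $s/p$ ranges over $(0,1)$, so $|c_m|s^m\to 0$ for every $s<p$ becomes $|b_m|=o(t^m)$ for every $t>1$ (with $t=p/s$). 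This proves the assertion for functions, and simultaneously explains why the factor $p^m$ is built into the expansion.

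For differential forms, note that $z$ is a unit in $A(\cZ)$ and $\Omega(\cZ)$ is a free $A(\cZ)$-module of rank one generated by $dz$, hence also by $\tfrac{dz}{z}$. Writing $\omega=h(z)\tfrac{dz}{z}$ with $h\in A(\cZ)$ and expanding $h$ as above immediately produces the representation (\ref{E:diff_series}) with the same growth conditions on the coefficients, and uniqueness for forms follows from uniqueness for $h$. There is no real obstacle in the argument; the only point that requires any care is keeping track of the factor $|p|_p=p^{-1}$ when converting between the convergence at the outer radius $s<p$ and the growth condition $|b_m|=o(t^m)$ for $t>1$.
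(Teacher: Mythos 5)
Your argument is correct: the paper states this proposition without proof, treating it as a standard fact about rigid analytic functions on an open annulus, and your exhaustion of the wide open $\cZ$ by closed affinoid annuli $\cZ_{r,s}$, followed by the relabeling $a_k=c_{-k}$, $b_m=p^{-m}c_m$ (so that the factor $p^m$, with $|p|_p=p^{-1}$, symmetrizes the two convergence conditions), is exactly the standard justification, as is the reduction for $\Omega(\cZ)$ via freeness on the generator $\frac{dz}{z}$ (understood, as the paper intends, for continuous differentials). The only microscopic slip is that $s/p$ ranges over $(1/p,1)$ rather than $(0,1)$ as $s$ runs over $(1,p)$, but this is harmless since the conditions are monotone in $t$.
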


Define $A^{bd}(\cZ)$ and $\Omega^{bd}(\cZ)$ to be the subsets of $A(\cZ)$ and $\Omega(\cZ)$ of elements whose Laurent series have ``bounded" coefficients, in the sense that the sequences $a_n$ and $b_n$ are bounded. One can show that
\begin{prop}
The $\Q_p$-vector space $A^{bd}(\cZ)$ is a Banach-algebra under the norm $|\sum_{n\geq0} a_nz^{-n}+\sum_{\geq 0} b_m p^mz^m|=\sup\{|a_n|_p,|b_n|_p\}$. Moreover, $\Omega^{bd}(\cZ)$ is $A(\cZ)$-module and $\Q_p$-Banach space.
\end{prop}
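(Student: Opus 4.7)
The proposition is essentially bookkeeping with the Laurent expansions of Proposition~\ref{P:diffs}. The conceptual key is that the sup-of-coefficients norm $|f| = \sup\{|a_n|_p, |b_m|_p\}$ coincides with the supremum $\sup_{z\in\cZ}|f(z)|_p$: on the circle $|z|_p = r \in (1,p)$, a Gauss-norm computation gives $|f(z)|_p = \max\{|a_n|_p r^{-n},\; |b_m|_p (r/p)^m\}$, and as $r\to 1^+$ the negative-power maxima approach $|a_n|_p$ while as $r\to p^-$ the positive-power maxima approach $|b_m|_p$. The factor of $p^m$ in the expansion is precisely the normalization that makes the two boundary circles of the annulus contribute symmetrically to the norm.

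The plan for the Banach algebra assertion is, first, to check that $|\cdot|$ is a complete ultrametric norm on $A^{bd}(\cZ)$ by the standard Cauchy-in-coordinates argument. Given a Cauchy sequence $f^{(k)}$, each coefficient sequence $(a_n^{(k)})_k, (b_m^{(k)})_k$ is Cauchy in $\Q_p$ and hence convergent; the uniform bound on the sequence bounds the limit coefficients; and boundedness is strictly stronger than the $o(t^n)$ growth condition of Proposition~\ref{P:diffs}, so the limit lies genuinely in $A^{bd}(\cZ)$. For the algebra structure, one expands $fg$ by a formal Laurent convolution. Each coefficient of $fg$ is a convergent sum of products of the shapes $a_n c_{n'}$, $a_n d_m p^m$, $b_m c_n p^m$, or $b_m d_{m'} p^{m+m'}$, and in every shape the summand has $p$-adic absolute value at most $|f|\cdot|g|$ -- this is exactly where the $p^m$-factors do their work. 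The ultrametric inequality then yields simultaneously $fg\in A^{bd}(\cZ)$ and the submultiplicativity $|fg|\le|f|\cdot|g|$.

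For $\Omega^{bd}(\cZ)$, the map $\omega\mapsto(a_n,b_m)$ read off from~\eqref{E:diff_series} identifies the space, as a normed $\Q_p$-vector space, with the same bounded coefficient tuples, so the Banach space assertion reduces to the completeness argument already given. The module structure follows from the same Laurent-convolution calculation: if $f\in A^{bd}(\cZ)$ and $\omega\in\Omega^{bd}(\cZ)$ then $f\omega\in\Omega^{bd}(\cZ)$ with $|f\omega|\le|f|\cdot|\omega|$, and the ambient $A(\cZ)$-action on $\Omega(\cZ)$ restricts to this subspace (under the natural reading as an $A^{bd}(\cZ)$-submodule of $\Omega(\cZ)$). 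The entire argument is really just the ultrametric inequality applied to convolutions, with the $p^m$-normalization doing all the work; the only mild point of care is the asymmetric treatment of positive and negative powers, and no step is a genuine obstacle.
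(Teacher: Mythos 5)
Your argument is correct, and in fact the paper offers no proof of this proposition at all (it is introduced with ``One can show that''), so your write-up supplies exactly the standard verification one would expect: coefficientwise completeness for the sup-of-coefficients norm, and the Laurent-convolution estimate in which the $p^m$-normalization guarantees both boundedness of the product's coefficients and submultiplicativity. Your opening observation that the coefficient norm equals $\sup_{z\in\cZ}|f(z)|_p$ is also correct (Gauss norms on circles $|z|=r$, letting $r\to 1^+$ and $r\to p^-$), though it is not needed for the proof. One point deserves care: the proposition as printed says $\Omega^{bd}(\cZ)$ is an $A(\cZ)$-module, but that literal statement is false --- multiplying $\frac{dz}{z}\in\Omega^{bd}(\cZ)$ by an element of $A(\cZ)$ with unbounded coefficients leaves $\Omega^{bd}(\cZ)$ --- so your phrase ``the ambient $A(\cZ)$-action on $\Omega(\cZ)$ restricts to this subspace'' should not be read at face value; what you actually prove, and what the paper needs (the weight-$k$ action is defined via multiplication by $(a-bz^{-1})^k\in A^{bd}(\cZ)$), is that $\Omega^{bd}(\cZ)$ is an $A^{bd}(\cZ)$-module with $|f\omega|\le |f|\,|\omega|$. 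With that reading understood, your proposal is complete and correct.
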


\begin{lem}
The subgroup $\Gamma_p\subset\Sigma_0(p)$ preserves $\cZ$, and thus acts on $\cA(\cZ),\cA^{bd}(\cZ)$ and $\Omega(\cZ),\Omega^{bd}(\cZ)$.
\end{lem}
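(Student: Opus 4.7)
The plan is to split the lemma into the geometric claim that $\Gamma_p$ stabilizes the annulus $\cZ$, and its functorial consequences for the function/differential spaces. First I would unpack the $p$-adic shape of $\gamma = \begin{pmatrix} a & b \\ c & d \end{pmatrix} \in \Gamma_p$: evaluating the matrix action on $(0,1)^T$ forces $b \in \Z_p$ and $d \in \Z_p^\times$; demanding $cx + d \in \Z_p^\times$ for all $x \in \Z_p$ forces $c \in p\Z_p$; and running the same analysis for $\gamma^{-1}$ yields $a \in \Z_p^\times$ and $\det\gamma \in \Z_p^\times$. This simultaneously verifies the implicit inclusion $\Gamma_p \subset \Sigma_0(p)$ used in the statement.

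Next I would verify $\gamma \cdot \cZ \subset \cZ$ directly from the M\"obius formula $\gamma \cdot z = (-b + az)/(d - cz)$. For $z$ with $1 < |z|_p < p$: since $|a| = 1$ and $|b| \leq 1 < |z|$, the ultrametric inequality gives $|{-b + az}| = |az| = |z|$; since $|c| \leq p^{-1}$ and $|z| < p$, we have $|cz| < 1 = |d|$, whence $|d - cz| = 1$. Therefore $|\gamma \cdot z| = |z| \in (1,p)$, so $\gamma \cdot z \in \cZ$; applied also to $\gamma^{-1}$, this shows $\gamma$ is a rigid-analytic automorphism of $\cZ$. Pullback then supplies the action on $\cA(\cZ)$, and combined with the standard formula $d(\gamma z) = \det(\gamma)(d - cz)^{-2}\,dz$, the induced action on $\Omega(\cZ)$.

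For the bounded subspaces I would exploit that $\cA^{bd}(\cZ)$ coincides with the sup-norm-bounded elements of $\cA(\cZ)$: bounded Laurent coefficients immediately give bounded values on $\cZ$ (since $|z^{-n}|, |p^m z^m| \leq 1$ there), and conversely a maximum-modulus argument on sub-annuli of $\cZ$ recovers bounded coefficients from bounded values. Bijectivity of $\gamma$ on $\cZ$ then preserves sup-norms, so $\cA^{bd}(\cZ)$ is stable. For $\Omega^{bd}(\cZ) = \cA^{bd}(\cZ)\cdot dz/z$, the pullback can be written $\gamma^\ast(dz/z) = g(z)\,dz/z$ with $g(z) = \det(\gamma)\, z/[(-b + az)(d - cz)]$, and the estimates above give $|g(z)| = 1$ on all of $\cZ$, so $g$ is a unit of $\cA^{bd}(\cZ)$ and multiplication by it preserves the bounded module. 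The main potential obstacle is the sup-norm characterization of $\cA^{bd}(\cZ)$; if that is not deemed standard in the present setting, I would instead argue directly at the level of Laurent coefficients by expanding $(d - cz)^{-1}$ and $(-b + az)^{-1}$ as geometric series on $\cZ$ with uniformly bounded coefficients, and then checking coefficient-wise that pullback is bounded in the Laurent-coefficient norm.
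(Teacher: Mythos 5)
Your proof is correct and its core is exactly the paper's argument: from $a,d\in\Z_p^\times$, $b\in\Z_p$, $c\in p\Z_p$ one gets $|{-b+az}|_p=|z|_p$ and $|d-cz|_p=1$, hence $|\gamma\cdot z|_p=|z|_p$ and $\gamma\cdot z\in\cZ$. The additional material you supply --- deriving the congruence shape of $\Gamma_p$ from its definition and checking stability of $\cA^{bd}(\cZ)$ and $\Omega^{bd}(\cZ)$ via the sup-norm characterization of bounded Laurent coefficients --- is sound supplementary detail that the paper leaves implicit.
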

\begin{proof}
Let $\gamma=\begin{pmatrix} a & b\\ c& d\end{pmatrix}\in\Gamma_p$, i.e. $c\in p\Z_p$ and $a,d\in\Z_p^\times$. Fixing $z\in\cZ$, we have $|\gamma\cdot z|_p=\frac{|b+az|_p}{|d-cz|_p}.$ Because $|az|_p=|z|_p>|b|_p$, we have $|b+az|_p=|z|_p$. Similarly, since $|cz|_p<1$ and $|d|_p=1$, $|d-cz|_p=|d|_p=1$. Therefore, $|\gamma z|_p=|z|_p$, so $\gamma\cdot z\in\cZ$. 
\end{proof}

\begin{defn}
For each integer $k$, the weight-$k$ action of $\Gamma_p$ on $\Omega_{}(\cZ)$ is defined by 
\begin{equation*}
	(\omega|_k\gamma)(z)=(a-bz^{-1})^k(\omega|_0\gamma).
\end{equation*}
Since $(a-bz^{-1})^k\in A^{bd}(\cZ)$, this extends to a well-defined action on $\Omega(\cZ)$ and $\Omega^{bd}(\cZ)$. We indicate that we are using this weight $k$ action with the subscript $k$, e.g. $\Omega_k(\cZ)$.
\end{defn}
\subsection{Modular symbols of differential forms}
Define, for each $\mu\in \widetilde{\cD}(\Z_p^2)$, the modified Fourier-transform
\begin{equation*}
	\cF^*(\mu):=\int_{\Z_p\times\Z_p^\times}\exp(xX+yY) d\mu(x,y)\in \widetilde{\cR}.
\end{equation*}
It is evident that $\cF^*:\widetilde{\cD}(\Z_p^2)\longrightarrow \widetilde{\cR}$ is a $\Q[X,Y]$-homomorphism, and it is $\Gamma_p$-equivariant. Moreover, $\cF^*(\delta_0)=0$, so $\cF^*$ induces a $\Gamma_p$-equivariant, $\Q[X,Y]$-homomorphism $\widetilde{\cD}(\Z_p^2)/\delta_0\longrightarrow \widetilde{\cR}$.

The ring $\Q[[X,Y]]$ and its localization $\widetilde{\cR}$ are graded by degree and  for each integer $k$ and element $F\in \widetilde{\cR}]$, denote by $F_k$ the degree $k$ component of $F$. It is a homogeneous rational polynomial of degree $k$. The space of degree $0$ rational polynomials in $(S^{-1}\Q[X,Y])_0$ is isomorphic to the ring of rational functions on $\P^1(\Q)$ by the (identical) maps $F(X,Y)\mapsto F(1,z)$ or $F(X,Y)\mapsto F(z^{-1},1)$. 

In the same way that a degree $0$ rational function $f(X,Y)$ can be identified with  differential form $f(1,z)dz$ or $f(z^{-1},1)dz^{-1}$, we can pass from degree $k$ rational functions to differential forms by $\pi_{k}:\widetilde{\cR}\longrightarrow\Omega_{}(\cZ)$,
\begin{equation*}
	\pi_k (F(X,Y)) = k! F_{k-2}(z^{-1},1)d{z^{-1}},
\end{equation*}\

\begin{lem}\label{L:S0_equivariant}
The projection $\pi_k$ induces a $\Gamma_p$-equivariant homomorphism $\pi_k :\widetilde{\cR} (1) \longrightarrow \Omega_{k} (\cZ)$.
\end{lem}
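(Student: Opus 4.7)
The plan is to prove the lemma by a direct calculation. The map $\pi_k$ is visibly $\Q$-linear, so the only things to check are (a) that $\pi_k$ actually lands in $\Omega(\cZ)$, and (b) that it intertwines the two actions once ``(1)'' is correctly interpreted as a det-twist on the source.

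For well-definedness, I would first observe that $\widetilde{\cR}$ is naturally graded by degree: writing $F = G/L$ with $G \in \Q[[X,Y]]$ and $L$ a product of linear forms, the decomposition $G = \sum_n G_n$ into homogeneous pieces gives $F = \sum_n G_n/L$, whose $n$-th summand is homogeneous of degree $n - \deg L$. A comparison across different representatives $G/L = G'/L'$ shows the component $F_{k-2}$ is independent of choice. Since the denominators of $F_{k-2}$ are products of $\Q$-rational linear forms $aX+bY$, the rational function $F_{k-2}(z^{-1},1)$ has poles only at $z = -a/b \in \P^1(\Q)$. Every nonzero element of $\Q$ has $p$-adic absolute value a power of $p$, so no such rational pole lies in the annulus $\cZ = \{z : 1 < |z|_p < p\}$. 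Hence $F_{k-2}(z^{-1},1)\,dz^{-1} \in \Omega(\cZ)$.

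The core of the argument is the equivariance computation. I would change to the coordinate $w = z^{-1}$, so $\pi_k(F) = k!\,F_{k-2}(w,1)\,dw$. Starting from $\gamma\cdot z = (-b+az)/(d-cz)$, a short computation shows that in the $w$-coordinate $\gamma$ acts by $w \mapsto w' = (dw-c)/(a-bw)$ with Jacobian $dw'/dw = \det(\gamma)/(a-bw)^2$. Combining this pullback with the factor $(a-bz^{-1})^k = (a-bw)^k$ coming from the weight-$k$ slash action yields
\begin{equation*}
(\pi_k F)|_k\gamma \;=\; \det(\gamma)\,k!\,(a-bw)^{k-2}\,F_{k-2}\!\left(\tfrac{dw-c}{a-bw},1\right)dw \;=\; \det(\gamma)\,k!\,F_{k-2}(dw-c,\,a-bw)\,dw,
\end{equation*}
where the second equality uses the fact that $F_{k-2}$ is homogeneous of degree $k-2$. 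On the other hand, the ``paper's'' right action on $\widetilde{\cR}$ is the adjugate action $F\mapsto F((X,Y)\gamma^*) = F(dX-cY,\,-bX+aY)$, whose degree-$(k-2)$ specialization at $(w,1)$ is $F_{k-2}(dw-c,\,a-bw)$. Therefore $\pi_k(\gamma^*\cdot F) = k!\,F_{k-2}(dw-c,\,a-bw)\,dw$, and the two sides differ by exactly $\det(\gamma)$. This discrepancy is precisely the ``(1)''-twist: declaring the action on $\widetilde{\cR}(1)$ to be $F|\gamma := \det(\gamma)\cdot F((X,Y)\gamma^*)$, we obtain the desired identity $\pi_k(F|\gamma) = (\pi_k F)|_k\gamma$.

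The main obstacle is notational rather than conceptual. One has to juggle several closely related actions (original vs.\ adjugate, left vs.\ right, untwisted vs.\ det-twisted) and pair them consistently with the weight-$k$ slash action on $\Omega(\cZ)$, which itself packages a pullback with a multiplier $(a-bz^{-1})^k$. Once coordinates and conventions are fixed, the identity reduces to the chain rule together with homogeneity of $F_{k-2}$, so the calculation above is essentially the whole proof.
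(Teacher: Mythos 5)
Your proof is correct and follows essentially the same route as the paper: change to the coordinate $w=z^{-1}$, compute the Jacobian $\det(\gamma)/(a-bw)^2$ of the pullback, combine with the automorphy factor $(a-bw)^k$, and use homogeneity of $F_{k-2}$ to match the adjugate action on $\widetilde{\cR}$, with the leftover $\det(\gamma)$ absorbed by the $(1)$-twist. Your preliminary checks (well-definedness of the graded piece $F_{k-2}$ and that the $\Q$-rational poles avoid $\cZ$ since $|x|_p$ is a power of $p$) are a welcome explicit supplement to what the paper leaves implicit in the definition of $\pi_k$.
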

\begin{proof}
First, note that $dz^{-1}|_0\gamma = \det(\gamma) (a-bz^{-1})^{-2} dz^{-1}$. Therefore,
\begin{align*}
	\pi_k(\det(\gamma)F|\gamma)=&\det(\gamma)k!F_{k-2}( dX-cY,-bX+aY)dz^{-1}\\
	=&\det(\gamma)\left.(-bX+aY)^{k-2}k!F_{k-2}\left(\frac{dX-cY}{-bX+aY},1\right)\right|_{(X,Y)=(z^{-1},1)}dz^{-1}\\
	=&\det(\gamma)(a-bz^{-1})^{k-2}k!F_{k-2}\left(\frac{dz^{-1}-c}{-bz^{-1}+a}\right)dz^{-1}\\
	=&\pi_k(F)|_k\gamma.
\end{align*}
\end{proof}

Writing $\Gamma:=\Gamma_p\cap\Gamma_{f'},$ we have
\begin{cor}
The composition $\pi_k \circ\cF^*$ induces a homomorphism of modular symbols
\begin{equation*}
	\Symb_{\Gamma}(\widetilde{\cD}(\Z_p^2))(1)\xrightarrow{~\sim~}\Symb_{\Gamma}(\widetilde{\cD}(\Z_p^2)(1))\longrightarrow \Symb_{\Gamma}(\Omega_{k}(\cZ))
\end{equation*}
\end{cor}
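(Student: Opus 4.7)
The claim is essentially formal once the two $\Gamma_p$-equivariant pieces, namely $\cF^*$ and $\pi_k$, are in place. My plan is therefore to assemble these ingredients at the level of coefficient modules, note equivariance for $\Gamma$, and then push through the $\Symb_\Gamma(-)$ functor.

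First I would recall from the paragraph introducing $\cF^*$ that $\cF^*:\widetilde{\cD}(\Z_p^2)\longrightarrow\widetilde{\cR}$ is a $\Q[X,Y]$-linear, $\Gamma_p$-equivariant homomorphism with $\cF^*(\delta_0)=0$, so that it descends to a $\Gamma_p$-equivariant map
\begin{equation*}
\cF^*:\widetilde{\cD}(\Z_p^2)/\delta_0\longrightarrow \widetilde{\cR}.
\end{equation*}
Since twisting by $\det$ is a functorial operation on $\Gamma_p$-modules, the same formula defines a $\Gamma_p$-equivariant map on the $(1)$-twists, $\widetilde{\cD}(\Z_p^2)(1)/\delta_0\to \widetilde{\cR}(1)$. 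Composing with the $\Gamma_p$-equivariant map $\pi_k:\widetilde{\cR}(1)\to\Omega_k(\cZ)$ supplied by Lemma \ref{L:S0_equivariant} produces a $\Gamma_p$-equivariant homomorphism
\begin{equation*}
\pi_k\circ\cF^*:\widetilde{\cD}(\Z_p^2)(1)/\delta_0\longrightarrow \Omega_k(\cZ).
\end{equation*}

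Next I would restrict along the inclusion $\Gamma\subset\Gamma_p$ and apply the functor $\Symb_\Gamma(-)=\Hom_{\Z}(\Delta_0,-)^\Gamma$. Functoriality yields the second arrow $\Symb_\Gamma(\widetilde{\cD}(\Z_p^2)(1))\longrightarrow \Symb_\Gamma(\Omega_k(\cZ))$ of the corollary. For the first arrow, I would observe that $\Gamma=\Gamma_p\cap\Gamma_{f'}\subset\SL_2(\Z)$, so every $\gamma\in\Gamma$ has $\det\gamma=1$; consequently the twist $(1)$, which multiplies the $\gamma$-action by $\det(\gamma)$, leaves the $\Gamma$-action on the coefficient module untouched. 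Hence $\widetilde{\cD}(\Z_p^2)$ and $\widetilde{\cD}(\Z_p^2)(1)$ are identical as $\Gamma$-modules, and the canonical identification $\Symb_\Gamma(\widetilde{\cD}(\Z_p^2))(1)\xrightarrow{\sim}\Symb_\Gamma(\widetilde{\cD}(\Z_p^2)(1))$ is an equality of abelian groups.

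There is no genuine obstacle: all the content was absorbed into Lemma \ref{L:S0_equivariant} (the equivariance of $\pi_k$ with the $(1)$-twist) and the preceding remark that $\cF^*$ is $\Gamma_p$-equivariant and kills $\delta_0$. The only thing to be careful about is the bookkeeping of the determinantal twist, which disappears after passing to $\Gamma\subset\SL_2(\Z)$.
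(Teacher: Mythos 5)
Your proposal is correct and follows essentially the same route as the paper: the first identification comes from $\Gamma\subset\SL_2(\Z)$ making the determinant twist invisible on $\Gamma$-invariants, and the second arrow comes from the $\Gamma_p$-equivariance of $\pi_k$ in Lemma \ref{L:S0_equivariant} (together with the already-noted facts that $\cF^*$ is $\Gamma_p$-equivariant and kills $\delta_0$), restricted along $\Gamma\subset\Gamma_p$. The extra bookkeeping you supply about descending modulo $\delta_0$ is exactly what the paper absorbs into the preceding paragraph, so there is no substantive difference.
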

\begin{proof}
The first isomorphism follows from that fact that $\Gamma_{f'}\subset\SL_2(\Z)$--thus a $\Gamma_{f'}$-invariant homomorphism remains $\Gamma_{f'}$-invariant after twisting the action by the determinant.

The second homomorphism is induced by the $\Gamma_p$-equivariant homomorphism of Lemma \ref{L:S0_equivariant}, since $\Gamma\subset \Gamma_p$.
\end{proof}

\begin{defn}
For each integer $k\geq0$,  define $\Phi_{f'}^k\in\Symb_{\Gamma}(\Omega_{k}(\cZ))$ to be the modular symbol $\pi_k\circ \Phi_{f'}$.
\end{defn}

\subsection{Analytic families of modular symbols}
In this section, we extend the weight $k$ specializations $\Phi_{f'}^k$ to analytic families of modular symbols over the $\Q_p$-points of weight space, $\cX_{\Q_p}=\Hom_{cts}(\Z_p^\times,\Q_p^\times)$. In fact we only build families over affinoid subdomains of $\cX_{\Q_p}$, but this restriction allows us to work in the comfortable realm of $\Q_p$-Banach spaces, avoiding entirely technical considerations of Fr\'echet spaces. We remark that it is possible to extend the family $\Phi_{f'}^\kappa$ to $\C_p$-points of $\cX$, but the resulting differential forms will converge on a smaller annulus.

The Teichm\"uller character $\omega_p$ expresses $\Z_p^\times=\mu_{p-1}\times (1+p\Z_p)$, so $\cX_{\Q_p}=\Hom_{cts}(\mu_{p-1},\Q_p^\times)\times\Hom_{cts}(1+p\Z_p,\Q_p^\times)=\Z/(p-1)\times \Hom_{cts}(1+p\Z_p,\Q_p^\times)$. For each $i\in\Z/(p-1)$, we write $\cX_i$ for the corresponding component $\Hom_{cts}(1+p\Z_p,\Q_p^\times)$ of weight-space. A choice of topological generator $\gamma$ of $1+p\Z_p$ identifies $\Hom_{cts}(1+p\Z_p,\Q_p^\times)=\Hom_{cts}(1+p\Z_p,\Z_p^\times)$ with $\Z_p$. Indeed, a continuous homomorphism $\kappa:1+p\Z_p\longrightarrow\Z_p^\times$ is determined by  $\kappa(\gamma)$  (which is necessarily in $1+p\Z_p$) so $\kappa(\gamma)=\gamma^{\alpha}$ for some unique $\alpha\in\Z_p$. We denote by $\wt(\kappa)$ the unique $p$-adic integer for which $\kappa(\langle \gamma\rangle )=\langle\gamma\rangle^{\wt(\gamma)}$; in other words, $\wt(\kappa)=\log_p(\kappa(\gamma))/\log_p(\gamma)$. Thus, $\cX_{\Q_p}$ can be viewed as $p-1$ copies of $\Z_p$, and the (rigid) analytic functions on $A(\cX_{\Q_p})=\bigoplus_{i=1}^{p-1}A(\cX_i)\cong\bigoplus_{i=1}^{p-1}A[\Z_p,1]$. Finally, we note that the integers embed in $\cX_{\Q_p}$ by sending $k$ to the $k$-th power map. Under this embedding, $\wt(k)=k$.

\begin{lem}
For each $\kappa\in\cX_{\Q_p}$, the function $\kappa:\Z_p^\times\longrightarrow\Q_p^\times$ extends to a rigid analytic function $\kappa\in A[\Z_p,r]$ for any $r<1$.
\end{lem}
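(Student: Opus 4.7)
The plan is to exploit the decomposition $\Z_p^\times = \mu_{p-1} \times (1+p\Z_p)$ coming from the Teichm\"uller character, together with the classical fact that the binomial series $(1+w)^\alpha = \sum_{n\ge 0} \binom{\alpha}{n} w^n$ is rigid analytic on the open unit disk in $w$ whenever $\alpha \in \Z_p$.

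First I would decompose $\kappa$ explicitly. For $z \in \Z_p^\times$, write $z = \omega(z) \langle z \rangle$, where $\omega(z) \in \mu_{p-1}$ is the Teichm\"uller lift of $z$ mod $p$ and $\langle z \rangle := z/\omega(z) \in 1+p\Z_p$, so that $\kappa(z) = \kappa(\omega(z)) \cdot \kappa(\langle z \rangle)$. Because $1+p\Z_p$ is pro-$p$, the continuous homomorphism $\kappa\colon 1+p\Z_p \to \Q_p^\times$ takes values in $1+p\Z_p$, and using the topological $\Z_p$-module structure on $1+p\Z_p$ one obtains a unique $\alpha = \wt(\kappa) \in \Z_p$ with $\kappa(u) = u^\alpha$ for all $u \in 1+p\Z_p$.

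Next I would argue residue disk by residue disk. For $r<1$, the rigid space $B[\Z_p, r]$ decomposes as a disjoint union of finitely many closed sub-disks, each of radius $r$ centered at an element of $\Z_p$; on sub-disks contained in $B[p\Z_p, r]$ I extend $\kappa$ by zero, which is the customary convention for viewing a character of $\Z_p^\times$ as an element of $\cA(\Z_p)$. On a sub-disk $B[a, r]$ with $a \in \Z_p^\times$, set $a^\ast := \omega(a)$ and define
\begin{equation*}
\kappa(z) \;:=\; \kappa(a^\ast) \sum_{n \ge 0} \binom{\alpha}{n} \left(\frac{z}{a^\ast} - 1\right)^n.
\end{equation*}
For $z \in B[a, r]$ one has $|z/a^\ast - 1| \le \max(r, p^{-1}) < 1$, and since $\alpha \in \Z_p$ gives $|\binom{\alpha}{n}|_p \le 1$, the series converges absolutely to a rigid analytic function on $B[a, r]$. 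A direct check on $a^\ast(1+p\Z_p) \subset B[a, r]$ recovers $\kappa(a^\ast)\langle z\rangle^\alpha = \kappa(z)$, so the local formulas assemble into an element of $A[\Z_p, r]$ extending the original character.

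I do not anticipate a serious obstacle. The only substantive analytic input is the classical bound $|\binom{\alpha}{n}|_p \le 1$ for $\alpha \in \Z_p$, and the gluing across residue disks is automatic because these disks are pairwise disjoint in $\C_p$. The only mildly delicate point is the convention of extending $\kappa$ by zero on $B[p\Z_p, r]$, which is standard in the Pollack--Stevens framework adopted throughout the paper.
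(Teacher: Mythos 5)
Your argument is correct and is essentially the paper's own proof: both decompose $\Z_p^\times$ via the Teichm\"uller character, work residue disc by residue disc around the Teichm\"uller representatives, and extend $\kappa$ by the binomial series $\kappa(\omega_p(a))\sum_{n\geq 0}{\wt(\kappa)\choose n}(z/\omega_p(a)-1)^n$, with convergence on discs of radius $r<1$ coming from $\left|{\wt(\kappa)\choose n}\right|_p\leq 1$ because $\wt(\kappa)\in\Z_p$ (i.e.\ because $\kappa$ is a $\Q_p$-valued point of weight space). Your explicit remark about extending by zero on the non-unit residue discs is a harmless added detail left implicit in the paper.
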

\begin{proof}
The restriction of $\kappa$ to a given residue disc $\omega_p(a)+p\Z_p$ can be described by the binomial theorem,
\begin{align}
	\kappa(\omega_p(a)+u) &= \kappa(\omega_p(a))\kappa(1+u/\omega_p(a))\\
	&=\kappa(\omega_p(a))(1+u/\omega_p(a))^{\wt(\kappa)}\\
	&=\kappa(\omega_p(a))\sum_{n\geq 0} {\wt(\kappa) \choose n} u^n\omega_p(a)^{-n},\label{E:binom}
\end{align}
where ${\wt\kappa \choose n}$ is the usual binomial coefficient extended to $\Z_p$. Now each ${\wt\kappa \choose n}$ has $p$-adic valuation bounded above by $1$ (crucial to this is $\kappa\in\cX_{\Q_p}$), so the series (\ref{E:binom}) converges for $|u|<1$. Thus, $\kappa\in A[\Z_p,r]$ for $r<1$.
\end{proof}

The Lemma allows us to extend the weight-$k$ automorphy factor $(a-bz^{-1})^k$ to arbitrary weights $\kappa\in\cX_{\Q_p}$ by putting
\begin{equation*}
	(a-bz^{-1})^\kappa:=\kappa(\omega_p(a))\left(\frac{a-bz^{-1}}{\omega_p(a)}\right)^{\wt(\kappa)}.
\end{equation*}
This defines, for each $\kappa\in\cX_{\Q_p}$, a weight-$\kappa$ action of $\Gamma_p$ on $\Omega(\cZ)$ by
\begin{equation*}
	\omega |_\kappa \gamma :=(a-bz^{-1})^\kappa \omega|_0\gamma
\end{equation*}
Moreover, one can define an action of $\Gamma_p$ on families (see \cite{Bel12}, \S 3.1)--for each affinoid $W_{\Q_p}\subset \cX_{\Q_p}$, there is an action of $\Gamma_p$ on $\Omega_{}^{bd}(\cZ)\widehat{\otimes}_{\Q_p}A(W_{\Q_p})$ specializing at each $\kappa\in W_{\Q_p}$ to the weight-$\kappa$ action on $\Omega_{}^{bd}(\cZ)$. 

The following theorem generalizes earlier results of Campell \cite{Duff} and Kostadinov \cite{Kalin}:

\begin{thm}
For each affinoid $W\subset \cX$, there exists an analytic family of modular symbols 
\begin{equation*}
\mathbf{\Phi}_{f'}^W\in\Symb_{\Gamma}(\Omega^{bd}(\cZ)\widehat\otimes_{\Q_p}A(W_{\Q_p}))
\end{equation*} 
whose evaluation at each non-negative integer $k\in W\cap \Z_{\geq0}$ is $\Phi_{f'}^k$.
\end{thm}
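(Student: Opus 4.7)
The plan is to interpolate the weight-$k$ specialization $\pi_k\circ\cF^*$ across $W$ by replacing the integer exponent $(y+xz^{-1})^{k}$ implicit in $\pi_k\circ\cF^*$ with the character value $(y+xz^{-1})^{\kappa}$. First I would unpack the integer case: writing a pseudo-distribution $\mu\in\widetilde{\cD}(\Z_p^2)$ as $\mu=H^{-1}\nu$ with $\nu\in\cD(\Z_p^2)$ and $H=\prod(a_iX+b_iY)\in S$ of total degree $d$, the Fourier transform gives $\cF^*(\mu)=H^{-1}\cF^*(\nu)$ with homogeneous components $(\cF^*(\nu))_{n}(X,Y)=\int(xX+yY)^n/n!\,d\nu(x,y)$; unwinding $\pi_k(F)=k!\,F_{k-2}(z^{-1},1)\,dz^{-1}$ yields
\begin{equation*}
\pi_k\circ\cF^*(H^{-1}\nu)(z)=\frac{k!}{(k+d-2)!}\,H(z^{-1},1)^{-1}\!\int_{\Z_p\times\Z_p^\times}(y+xz^{-1})^{k+d-2}\,d\nu(x,y)\;dz^{-1}.
\end{equation*}
By Lemma \ref{L:pole_desc}, for every divisor $D=\{a/c,b/d\}$ I may choose $H=(aX+bY)(cX+dY)$ of degree $d=2$, so the combinatorial prefactor collapses to $1$ and the only $k$-dependence is the exponent $(y+xz^{-1})^k$.

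For the family, set $\nu_D:=(aX+bY)(cX+dY)\cdot\Phi_{f'}(D)\in\cD(\Z_p^2)/\delta_0$, which is an honest distribution by Theorem \ref{T:analytic_modular_symbol} and Lemma \ref{L:pole_desc}, and define
\begin{equation*}
\mathbf{\Phi}_{f'}^W(D)(z)\;:=\;\bigl[(az^{-1}+b)(cz^{-1}+d)\bigr]^{-1}\!\left(\int_{\Z_p\times\Z_p^\times}(y+xz^{-1})^{\kappa}\,d\nu_D(x,y)\right)dz^{-1}.
\end{equation*}
Here $(y+xz^{-1})^{\kappa}:=\kappa(\omega_p(y))\bigl((y+xz^{-1})/\omega_p(y)\bigr)^{\wt(\kappa)}$ is well-defined because for $y\in\Z_p^\times$ and $z\in\cZ$ we have $|xz^{-1}|<1=|y|$, placing $(y+xz^{-1})/\omega_p(y)$ in the open disk of radius $1$ around $1$, on which the binomial series $(1+u)^{\wt(\kappa)}=\sum_n\binom{\wt(\kappa)}{n}u^n$ converges. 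By construction, specialization at $\kappa=k\in W\cap\Z_{\geq 0}$ recovers $\pi_k\circ\cF^*(\Phi_{f'}(D))=\Phi_{f'}^k(D)$. The $\Gamma$-equivariance of $\mathbf{\Phi}_{f'}^W$ relative to the weight-$\kappa$ action on $\Omega^{bd}(\cZ)\widehat\otimes A(W_{\Q_p})$ follows from the $\Gamma_{f'}$-invariance of $\Phi_{f'}$ together with a $\kappa$-parametrized version of Lemma \ref{L:S0_equivariant}: under $\gamma=\begin{pmatrix}\alpha&\beta\\ \gamma'&\delta\end{pmatrix}\in\Gamma_p$ the pair (rational prefactor, kernel) transforms so as to produce precisely the automorphy factor $(\alpha-\beta z^{-1})^{\kappa}$ defining $|_\kappa\gamma$.

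The main obstacle is verifying that $\mathbf{\Phi}_{f'}^W(D)$ lies in the Banach completion $\Omega^{bd}(\cZ)\widehat\otimes_{\Q_p}A(W_{\Q_p})$, i.e.\ is \emph{jointly} bounded in $(z,\kappa)$. I would handle this by partitioning $\Z_p^\times$ into residue classes modulo $p$, on each of which $\omega_p(y)$ is constant, and expanding
\begin{equation*}
(y+xz^{-1})^\kappa=\kappa(\omega_p(y))\sum_{n\geq 0}\binom{\wt(\kappa)}{n}\bigl((y-\omega_p(y)+xz^{-1})/\omega_p(y)\bigr)^n.
\end{equation*}
Because $|\binom{\wt(\kappa)}{n}|_p\leq 1$ uniformly for $\kappa\in W_{\Q_p}$, and because $\nu_D\in\cD(\Z_p^2)$ is an honest distribution of bounded norm (via Amice--Velu, as in the proof of Theorem \ref{T:is_analytic}), termwise integration yields a Laurent series in $z^{-1}$ whose coefficients are uniformly bounded over $\kappa\in W_{\Q_p}$. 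The rational prefactor is itself a bounded analytic function on $\cZ$, since the cusps $a/c,b/d\in\P^1(\Q)\subset\P^1(\Q_p)$ cannot lie in $\cZ\subset\cH_p$; multiplying through then delivers the required element of $\Omega^{bd}(\cZ)\widehat\otimes_{\Q_p}A(W_{\Q_p})$, completing the construction of the family.
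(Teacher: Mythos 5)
Your interpolation step is essentially the paper's own argument: expanding your kernel as $(y+xz^{-1})^{\kappa}=y^{\kappa}\sum_{n\geq 0}\binom{\wt(\kappa)}{n}(x/y)^{n}z^{-n}$ produces exactly the coefficient functions $\alpha_n(\kappa)=\binom{\wt(\kappa)}{n}\int_{\Z_p\times\Z_p^\times}(x/y)^n y^{\kappa}\,d\mu_{f'}(D)$ that the paper interpolates, and your boundedness argument ($|\binom{\wt(\kappa)}{n}|_p\leq 1$ together with the honest distribution $\nu_D$) is the same as the paper's, with one caution: a locally analytic distribution has no global norm, so you must restrict $\nu_D$ to a fixed Banach piece $D[\Z_p^2,r]$, $r<1$, on which all the functions $(x/y)^n y^{\kappa}$ live with sup-norm at most $1$; that is how the paper obtains a bound uniform in $n$ and $\kappa$, and you should also record that each $\alpha_n$ is analytic on $W_{\Q_p}$, not merely bounded, to land in $\Omega^{bd}(\cZ)\widehat\otimes_{\Q_p}A(W_{\Q_p})$.

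The genuine gap is in the claim that the resulting family is a modular symbol. You assert $\Gamma$-equivariance via an unproved ``$\kappa$-parametrized version of Lemma \ref{L:S0_equivariant}'', and you never address well-definedness on $\Delta_0$: your formula is given divisor-by-divisor with a denominator $H_D$ depending on the divisor, and compatibility with the relation $\{r,t\}=\{r,s\}+\{s,t\}$ is not automatic. Clearing denominators, additivity of $\Phi_{f'}$ gives $L_s\cdot\nu_{\{r,t\}}=L_t\cdot\nu_{\{r,s\}}+L_r\cdot\nu_{\{s,t\}}$, where multiplication by a linear form acts on distributions as the differential operator $L(\partial_x,\partial_y)$, \emph{not} as multiplication by $L(z^{-1},1)$; translating this into the needed identity among your $\kappa$-integrals requires the computation $\int (a\partial_x+b\partial_y)(y+xz^{-1})^{\kappa}\,d\nu=\wt(\kappa)\,(az^{-1}+b)\int (y+xz^{-1})^{\kappa-1}\,d\nu$ together with multiplicativity of $\kappa$-powers on the relevant domains and care with $\delta_0$ --- none of which appears in your write-up, and the same computations are what your parametrized equivariance lemma would have to contain. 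The paper sidesteps all of this with a density argument you should adopt: the Laurent coefficients of $\mathbf{\Phi}^W_{f'}(\gamma D)|_\kappa\gamma-\mathbf{\Phi}^W_{f'}(D)$, and of the defect in any divisor relation, are analytic functions on $W_{\Q_p}$ vanishing at every non-negative integer $k\in W$ (where the specialization is $\Phi^k_{f'}$, already known to be a modular symbol), hence vanish identically since those integers are dense in $W$.
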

\begin{proof}

Fix an arbitrary divisor $D=\left\{\frac{a}{c},\frac{b}{d}\right\}\in\Delta_0$ with $ad-bc\neq0$ and $W_{\Q_p}\subset \cX_{\Q_p}$ an affinoid. Our first step is to exhibit $\Phi_{f'}^k(D)$ as the weight-$k$ specialization of an analytic family in $\Omega^{bd}_{}(\cZ)\widehat\otimes_{\Q_p}A(W_{\Q_p})$. Recall that Theorem \ref{T:is_analytic} states $\Phi_{f'}(D)=\frac{1}{(aX+cY)(bX+dY)}\mu_{f'}(D)$ for some $\mu_{f'}(D)\in\cD(\Z_p^2)$. For each integer $k\geq 0$, the degree $k-2$ part of $\cF^*(\Phi_{f'}(D))$ is the degree $k$ part of $\cF^*(\mu_{f'}(D))$ multiplied by $\frac{1}{(aX+cY)(bX+dY)}$. Therefore,
\begin{equation}\label{E:weight_k}
	\Phi_{f'}^k(D) = \left(\int_{\Z_p\times\Z_p^\times}(xz^{-1}+y)^{k} d\mu_{f'}(D)\right) \frac{dz^{-1}}{(az^{-1}+c)(bz^{-1}+d)}.
\end{equation}
The differential form $\frac{dz^{-1}}{(az^{-1}+c)(bz^{-1}+d)}$ belongs to $\Omega^{bd}_{}(\cZ)$ and the polynomial 
\begin{equation*}
\int_{\Z_p\times\Z_p^\times}(xz^{-1}+y)^kd\mu_{f'}(D)=\sum_{n=0}^k{k\choose n} \left(\int_{\Z_p\times\Z_p^\times}(x/y)^ny^k d\mu_{f'}(D)\right) z^{-n}
\end{equation*}
 is an element of $A^{bd}(\cZ)$, and thus their product, $\Phi_{f'}^k(D)$, is a differential form in $\Omega^{bd}_{}(\cZ)$. For each integer $n\geq 0$, the coefficient of $z^{-n}\frac{dz^{-1}}{(az^{-1}+c)(bz^{-1}+d)}$ is the weight $k$ specialization of the analytic  function $\alpha_n\in A(\cX)$ defined by $\alpha_n(\kappa):= {\wt{\kappa} \choose n}\int_{\Z_p\times\Z_p^\times} (x/y)^n y^\kappa d\mu_{f'}(D).$ We will show that, for each $\kappa\in W_{\Q_p}$, the coefficients $\alpha_n(\kappa)$ are bounded and thus $\sum_{n\geq 0} \alpha_n(\kappa)z^{-n}\frac{dz^{-1}}{(az^{-1}+c)(bz^{-1}+d)}$ is a differential form in $\Omega^{bd}_{}(\cZ)$. This will show $\sum_{n\geq 0} \alpha_nz^{-n}\frac{dz^{-1}}{(az^{-1}+c)(bz^{-1}+d)}\in\Omega^{bd}(\cZ)\widehat{\otimes}_{\Q_p}A(W_{\Q_p})$.

Fixing $\kappa\in W\subset \cX_{\Q_p}$, observe that the factor ${\wt{\kappa} \choose n}$ is $p$-adically bounded as $n$-varies.  The interesting part of $\alpha_n(\kappa)$ is $\int_{\Z_p\times\Z_p^\times} (x/y)^n y^\kappa d\mu_{f'}(D)$, but our work in showing that $\mu_{f'}(D)$ is a locally analytic distribution will finally pay dividends. Observe that $(x/y)^ny^{\kappa}|_{\Z_p\times\Z_p^\times}$ is a rigid analytic function of some radius $r$, i.e. an element of $A[\Z_p^2,r]$. One sees that the sup-norm $||(x/y)^ny^{\kappa}|_{\Z_p\times\Z_p^\times}||_{r}$ is bounded above by $1$, as the image of $\kappa$ belongs to $\Z_p^\times$. Therefore, it suffices to consider our locally analytic distribution $\mu_{f'}(D)$ restricted to the $\Q_p$-Banach space of rigid analytic distributions $D[\Z_p^2,r]$. Let $||\mu_{f'}(D)||_{r}$ denote the Gauss norm of $\mu_{f'}(D)$ restricted to $D[\Z_p^2,r]$, i.e. 
\begin{equation*}
	||\mu_{f'}(D)||_{r}=\sup_{\substack{g\in A[\Z_p^2,r]\\ g\neq 0}} \frac{ |\mu_{f'}(D)(g)|_p}{||g||_{r}}.
\end{equation*}
It follows that 
\begin{equation*}
\left|\int_{\Z_p\times\Z_p^\times} (x/y)^ny^{\kappa }d\mu_{f'}\right|_p \leq ||\mu_{f'}(D)||_{1/p}\cdot || (x/y)^ny^{\kappa }|_{\Z_p\times\Z_p^\times}||_{r}\leq ||\mu_{f'(D)}||_{r}. 
\end{equation*}

Since the values of $\Phi_{f'}^k$ interpolate into analytic families in $\Omega^{bd}_{}(\cZ)\widehat{\otimes}_{\Q_p} A(W_{\Q_p})$, we have a map $\mathbf{\Phi}^{W}_{f'}:\Delta_0\longrightarrow \Omega^{bd}_{}(\cZ)\widehat{\otimes}_{\Q_p} A(W_{\Q_p})$. It only remains to show that $\mathbf{\Phi}^W_{f'}$ satisfies the necessary conditions to define a modular symbol, but this follows from a simple continuity argument. For example, to show that $\mathbf{\Phi}^W_{f'}$ is $\Gamma$-invariant, fix an arbitrary $D\in \Delta_0$ and $\gamma\in \Gamma$. Consider $\mathbf{\Phi}^W_{f'}(\gamma D)|\gamma- \mathbf{\Phi}^W_{f'}\in \Omega^{bd}_{}(\cZ)\widehat{\otimes}_{\Q_p} A(W_{\Q_p})$, which we wish to show is zero. The description of $\Omega^{bd}(\cZ)$ in terms of Laurent series allows us to regard $\Omega_{}(\cZ)\widehat{\otimes}_{\Q_p}A(\cX)$ as a subset of $A(W_{\Q_p})[[z,z^{-1}]]dz^{-1}$. To show that $\mathbf{\Phi}^W_{f'}(\gamma D)|\gamma- \mathbf{\Phi}^W_{f'}=\sum_{n\in\Z}\alpha_n z^{-n}dz^{-1}$ is zero, it is enough to show that each coefficient $\alpha_n\in\cA(W_{\Q_p})$ vanishes. Evaluating at an integer $k\geq 0$ in $W$,
\begin{equation*}
	\ev_k(\mathbf{\Phi}^W_{f'}(\gamma D)|\gamma)-\ev_k(\mathbf{\Phi}^W_{f'})=\Phi_{f'}^k(\gamma D)|_k\gamma-\Phi_{f'}^k=0,
\end{equation*}
so each coefficient $\alpha_n$ vanishes on a dense subset of $\cX_{\Q_p}$. It follows that each $\alpha_n$ is identically $0$, and thus $\mathbf{\Phi}_{f'}$ is $\Gamma$-invariant. The same arguments show that $\mathbf{\Phi}^W_{f'}$, extended by linearity, satisfies the necessary relations, and thus defines a modular symbol.
\end{proof}

Once we have the analytic families $\mathbf{\Phi}_{f'}^W$, we can \emph{define} the specialization of $\Phi_{f'}$ to negative weights $-k\leq 0$ by picking a an affinoid neighborhood $W$ of $-k$ and evaluating $\mathbf{\Phi}_{f'}^W$ at weight $-k$: $\Phi_{f'}^{-k}:= \ev_{-k}(\mathbf{\Phi}^W_{f'}) \in\Symb_{\Gamma}(\Omega_{-k}^{bd}(\cZ))$.

\begin{lem}\label{L:is_zero}
If the test function $f'$ is good for a cusp $r$ (and hence for all the cusps in $\Gamma\cdot r$), then the analytic family of modular symbols $\Phi^{k}_{f'}$ will vanish at $k=0$ and $\lim_{k\mapsto 0}\frac{1}{k} \Phi_{f'}^k$ is a well-defined modular symbol. 
\end{lem}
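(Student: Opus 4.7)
The plan is to establish that $\Phi_{f'}^0(D)=0$ for every divisor $D\in\Delta_0$, after which the existence of $\lim_{k\to 0}\frac{1}{k}\Phi_{f'}^k$ as a genuine modular symbol will follow from the rigid analyticity of the family $\mathbf{\Phi}_{f'}^W$ in $k$.

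First I would reduce to a single polar configuration. Any divisor decomposes as $\{s_1,s_2\}=\{s_1,r\}+\{r,s_2\}$, and since $\Phi_{f'}^0$ is $\Z$-linear on $\Delta_0$, it is enough to prove vanishing on divisors $\{a/c,b/d\}$ where one of the two cusps---say $b/d$---is good for $f'$. Combining the general pole description with Theorem \ref{T:pole_description} produces an honest distribution $\nu\in\cD(\Z_p^2)$ such that
\begin{equation*}
\Phi_{f'}\{a/c,b/d\}=\frac{(bX+dY)\,\nu}{(aX+cY)(bX+dY)}=\frac{1}{aX+cY}\,\nu.
\end{equation*}
Applying the $\Q[X,Y]$-equivariant Fourier transform $\cF^*$ gives $\cF^*(\Phi_{f'}\{a/c,b/d\})=\cF^*(\nu)/(aX+cY)$. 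Because $\nu$ is an honest locally analytic distribution, $\cF^*(\nu)\in\Q_p[[X,Y]]$ has only non-negative homogeneous components, so the quotient by the linear form $aX+cY$ has lowest homogeneous degree $-1$. The specialization $\pi_0$ extracts the degree $-2$ component (multiplied by $0!=1$), which is therefore zero, yielding $\Phi_{f'}^0\{a/c,b/d\}=0$.

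For the second statement, the analytic family $\mathbf{\Phi}_{f'}^W\in\Symb_\Gamma(\Omega^{bd}(\cZ)\widehat{\otimes}_{\Q_p}A(W_{\Q_p}))$, for any affinoid $W\ni 0$, takes the value $0$ on every divisor at $k=0$. Division by the coordinate function $k$ extends $\mathbf{\Phi}_{f'}^W/k$ analytically across $0$, and its specialization at $k=0$ is $\frac{d}{dk}\big|_{k=0}\Phi_{f'}^k\in\Omega^{bd}_0(\cZ)$. The modular-symbol property for this extended family is inherited from that of $\mathbf{\Phi}_{f'}^W$: for any $\gamma\in\Gamma$ and $D\in\Delta_0$, the relation $\mathbf{\Phi}_{f'}^W(\gamma D)|\gamma=\mathbf{\Phi}_{f'}^W(D)$ persists after dividing by $k$, and evaluating at $k=0$ uses that the family action of $\Gamma$ specializes to the weight-$0$ action.

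The main obstacle I anticipate is making the cancellation in the first step rigorous---tracking the interaction of the $\Q[X,Y]$-module structure on $\widetilde{\cD}$ with the degree grading on $\widetilde{\cR}$ under $\cF^*$, and confirming that the single surviving linear pole shifts $\cF^*(\Phi_{f'}\{a/c,b/d\})$ into degrees $\geq -1$ so that $\pi_0$ annihilates it. Once the vanishing at $k=0$ is secured, the construction of the limit modular symbol is a routine consequence of the analytic-family formalism.
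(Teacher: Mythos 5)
Your proposal is correct and follows essentially the same route as the paper: reduce via $\{s_1,s_2\}=\{s_1,r\}+\{r,s_2\}$ to divisors with the good cusp as an endpoint, invoke Theorem \ref{T:pole_description} to kill one of the two linear poles, and conclude using the analytic family $\mathbf{\Phi}_{f'}^W$. The only (harmless) difference is presentational: the paper exhibits the weight-$k$ value explicitly as $k\left(\int_{\Z_p\times\Z_p^\times}(xz^{-1}+y)^{k-1}d\mu\right)\frac{dz^{-1}}{bz^{-1}+d}$, so that the factor $k$ gives both the vanishing at $0$ and the existence of $\lim_{k\to 0}\frac{1}{k}\Phi_{f'}^k$ at once, whereas you obtain the vanishing by a degree count on $\cF^*(\nu)/(aX+cY)$ and the limit by divisibility by the weight coordinate in $\Omega^{bd}(\cZ)\widehat{\otimes}_{\Q_p}A(W_{\Q_p})$ together with $A(W_{\Q_p})$-linearity of the $\Gamma$-action---the same underlying observation.
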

\begin{proof}
For any cusp $s=\frac{b}{d}$ distinct form $r$, the pseudo-distribution is of the form $\Phi_{f'}\{r,s\}=\frac{1}{bX+dY}\cdot\mu_{f'}\{r,s\}$ by Theorem \ref{T:pole_description}. Therefore, the degree $k>0$ part of $\Phi_{f'}\{r,s\}$ is equal to 
\begin{equation}
	\Phi_{f'}\{r,s\}= k \left(\int_{\Z_p\times\Z_p^\times} (xz^{-1}+y)^{k-1}d \mu_{f'}\{r,s\}\right)\frac{dz^{-1}}{bz^{-1}+d}.
\end{equation}
Taking $k\rightarrow 0$, we see that $\Phi_{f'}^0$ vanishes on all divisors supported at the good cusp $r$ and that $\lim_{k\rightarrow 0} \frac{1}{k}\Phi_{f'}^k$ is well-defined. We conclude by noting that $\Delta_0$ is generated by such divisors.
\end{proof}

\subsection{The residue pairing}

\begin{lem}\label{L:def_res}
There is a canonical $\Gamma_p$-invariant homomorphism, the residue map, $\Res:\Omega_{}(\cZ)\longrightarrow \Q_p$, characterized by
\begin{align*}
	\Res: \sum_{n\in \Z} a_n z^{-n}\frac{dz}{z}=a_0
\end{align*}
\end{lem}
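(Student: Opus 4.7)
My plan is to establish the lemma in two steps: first show that $\Res$ is a well-defined continuous $\Q_p$-linear functional, then verify $\Gamma_p$-invariance via a cohomological reformulation. Well-definedness is immediate from Proposition \ref{P:diffs}: every $\omega \in \Omega(\cZ)$ has a unique Laurent expansion of the stated form, so reading off the constant coefficient $a_0$ is unambiguous and continuous.

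For $\Gamma_p$-invariance, I would first reformulate $\Res$ as the unique continuous $\Q_p$-linear functional on $\Omega(\cZ)$ that vanishes on exact forms $dA(\cZ)$ and satisfies $\Res(dz/z)=1$. Both properties follow from termwise integration of the Laurent expansion: any form with $a_0 = 0$ admits an explicit antiderivative in $A(\cZ)$, while $dz/z$ visibly does not. By the preceding lemma, each $\gamma \in \Gamma_p$ defines an automorphism of $\cZ$, so pullback by $\gamma$ preserves $A(\cZ)$ and commutes with the exterior derivative, hence preserves $dA(\cZ)$. Therefore $\Res \circ (\,\cdot\,|_0\gamma) = \lambda(\gamma)\Res$ for some scalar $\lambda(\gamma) \in \Q_p$, and the problem reduces to checking $\lambda(\gamma) = 1$, or equivalently $\Res(d(\gamma\cdot z)/(\gamma\cdot z)) = 1$.

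A direct quotient-rule computation followed by partial fractions yields
\begin{equation*}
\frac{d(\gamma\cdot z)}{\gamma\cdot z} = \frac{\det(\gamma)\,dz}{(-b+az)(d-cz)} = \frac{a\,dz}{az-b} + \frac{c\,dz}{d-cz}.
\end{equation*}
The conditions $a,d \in \Z_p^\times$ and $c \in p\Z_p$ defining $\Gamma_p$ force $|b/a|_p \leq 1$ and $|d/c|_p \geq p$, so the pole of the first summand lies inside the inner boundary of $\cZ$ while the pole of the second lies beyond the outer boundary. Expanding each term in the appropriate Laurent variable on $\cZ$ gives residue $1$ for the first summand and $0$ for the second, as desired.

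The only step that requires genuine care is this last bookkeeping: one must verify that the two simple poles land in the correct components of $\P^1(\C_p)\setminus \cZ$, and this is precisely where the congruence condition $c \in p\Z_p$ in the definition of $\Gamma_p$ does essential work; without it, the second summand could contribute a nonzero residue and invariance would fail.
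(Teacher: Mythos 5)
Your argument is correct, but it is a genuinely different route from the paper's: the paper disposes of this lemma by citing the Remark on p.~223 of Schneider's \emph{Rigid-analytic $L$-transforms} \cite{Sch84}, where the residue map on wide open annuli and its invariance under automorphisms are established, whereas you give a self-contained proof. Your two ingredients are sound: uniqueness of the Laurent expansion (Proposition \ref{P:diffs}) makes $\omega\mapsto a_0$ well defined, and the identification of $\Res$ as the functional vanishing on $dA(\cZ)$ with $\Res(dz/z)=1$ reduces invariance to the explicit computation $\Res\bigl(d(\gamma\cdot z)/(\gamma\cdot z)\bigr)=1$, which your partial-fraction expansion handles correctly: with $a,d\in\Z_p^\times$, $b\in\Z_p$, $c\in p\Z_p$ the pole of $a\,dz/(az-b)$ lies in the inner component $|z|\le 1$ (giving residue $1$) and that of $c\,dz/(d-cz)$ lies in $|z|\ge p$ or is absent when $c=0$ (giving residue $0$). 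Two small points deserve a sentence each if you write this up: (i) the one-dimensionality of $\Omega(\cZ)/dA(\cZ)$ requires checking that the termwise antiderivative of a form with $a_0=0$ still satisfies the growth condition of Proposition \ref{P:diffs}, which holds because $|1/n|_p\le n$ does not disturb the condition ``$o(t^n)$ for every $t>1$''; (ii) the bound $|b/a|_p\le 1$ uses $b\in\Z_p$, which follows from the definition of $\Gamma_p$ as the stabilizer of $\Z_p\times\Z_p^\times$ (and is implicitly used in the paper's own proof that $\Gamma_p$ preserves $\cZ$). What your approach buys is transparency and independence from \cite{Sch84}; moreover, the same residue-of-pullback computation essentially reproves the change-of-variables identity $\Res_B(\omega(z|\gamma))=\Res_{B|\gamma^{-1}}(\omega)$ that the paper again imports from Schneider in Lemma \ref{L:almost_equivariance}, so your method could be reused there. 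What the citation buys the paper is brevity and a uniform reference for residues on arbitrary wide opens rather than just this annulus.
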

\begin{proof}
See the Remark on pg. 223 of \cite{Sch84}.
\end{proof}

For each $k\in\Z$, we define a pairing $\langle~,~\rangle_k:\Omega_{-k}(\cZ)\times \cA^\dagger_k(\Z_p,1)\longrightarrow\Q_p$ by $\langle \omega,f\rangle_k:=\Res (z^{-k}f \omega).$
\begin{prop}
The bilinear pairing $\langle~,~\rangle_k:\Omega_{-k}(\cZ)\times \cA^\dagger_k(\Z_p,1)\longrightarrow\Q_p$ is $\Gamma_p$-equivariant: $\langle\omega|_{-k}\gamma, \gamma^*\cdot_k f\rangle =\langle \omega,f\rangle$.
\end{prop}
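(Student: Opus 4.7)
The plan is to unpack both sides of the identity and show that the combined expression inside the residue on the left-hand side is exactly the pullback of $z^{-k}f\omega$ under the Möbius action of $\gamma$ on $\cZ$, after which the $\Gamma_p$-invariance of $\Res$ recorded in Lemma \ref{L:def_res} gives the equality for free.

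Fix $\gamma=\begin{pmatrix}a&b\\c&d\end{pmatrix}\in\Gamma_p$ so that $\gamma^*=\begin{pmatrix}d&-b\\-c&a\end{pmatrix}$, and recall the Möbius formula $\gamma\cdot z=(az-b)/(d-cz)$. From the definition of the weight-$k$ action of $\Sigma_0(p)$ on $\cA^\dagger_k(\Z_p)$ we compute
\begin{equation*}
(\gamma^*\cdot_k f)(z)=(d-cz)^k f(\gamma\cdot z),
\end{equation*}
and from the definition of the weight-$(-k)$ action on $\Omega(\cZ)$ we have $\omega|_{-k}\gamma=(a-bz^{-1})^{-k}(\omega|_0\gamma)$. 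Multiplying by $z^{-k}$, the scalar prefactor becomes
\begin{equation*}
z^{-k}(d-cz)^k(a-bz^{-1})^{-k}=\left(\frac{d-cz}{az-b}\right)^{k}=(\gamma\cdot z)^{-k},
\end{equation*}
so we obtain the clean identity
\begin{equation*}
z^{-k}(\gamma^*\cdot_k f)(z)\,(\omega|_{-k}\gamma)=(\gamma\cdot z)^{-k}\,f(\gamma\cdot z)\,(\omega|_0\gamma).
\end{equation*}

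Now I would observe that the right-hand side is nothing but the weight-$0$ pullback $(z^{-k}f\omega)|_0\gamma$: indeed, if $\omega=g(z)\,dz$ then $\omega|_0\gamma=g(\gamma\cdot z)\,d(\gamma\cdot z)$ by construction, and substituting $w=\gamma\cdot z$ into $w^{-k}f(w)g(w)\,dw$ produces exactly this expression. Here it is essential that $k$ is an integer and that $\gamma\in\Gamma_p$ so that $(d-cz)^k$ lies in $A^{bd}(\cZ)$ (because $|d|=1$ and $|cz|<1$), making all of the manipulations take place inside $\Omega(\cZ)$.

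Taking residues of both sides and applying the $\Gamma_p$-invariance of $\Res$ from Lemma \ref{L:def_res}, we conclude
\begin{equation*}
\langle\omega|_{-k}\gamma,\gamma^*\cdot_k f\rangle_k=\Res\bigl((z^{-k}f\omega)|_0\gamma\bigr)=\Res(z^{-k}f\omega)=\langle\omega,f\rangle_k,
\end{equation*}
which is the claimed equivariance. The entire content of the proof is the algebraic identification of the three automorphy factors $z^{-k}$, $(d-cz)^k$ and $(a-bz^{-1})^{-k}$ as a single Möbius factor $(\gamma\cdot z)^{-k}$; the only place requiring genuine care is to keep the adjugate action on $f$ and the non-adjugate action on $\omega$ straight, since these are the conventions in force throughout \S2 and \S3 of the paper.
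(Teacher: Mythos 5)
Your proposal is correct and follows essentially the same route as the paper: expand both weight actions, collapse the three automorphy factors $z^{-k}$, $(d-cz)^k$, $(a-bz^{-1})^{-k}$ into the single factor $(\gamma\cdot z)^{-k}$, recognize the resulting expression as the pullback of $z^{-k}f\omega$ under $z\mapsto\gamma\cdot z$, and invoke invariance of the residue (the paper phrases this as a change of variables over $B[\Z_p,1]$, which $\gamma$ stabilizes). No gaps.
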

\begin{proof}
Fix $\omega\in\Omega_{-k}(\cZ)$ and $f\in\cA^\dagger(\Z_p,1)$.  An element $\gamma\in\Gamma_p$ acts by
\begin{equation*}
	\omega|_{-k}\gamma(z) = (a-bz^{-1})^{-k}\omega(z|\gamma^*) \text{ and } \gamma\cdot_k f(z) = (a+cz)^kf(z|\gamma)
\end{equation*}
Because $\gamma$ stabilizes $B[\Z_p,1]$, we have
\begin{align*}
	\langle \omega|_{-k}\gamma, \gamma^*\cdot_kf(z)\rangle_k&\\
	 = &\Res_{B[\Z_p,1]}(z^{-k} (a-bz^{-1})^{-k}(d-cz)^k f(z|\gamma^*)\omega(z|\gamma^*))\\
	=&\Res_{B[\Z_p,1]}\left( \frac{ az-b}{d-cz}\right)^{-k} f(z|\gamma^*)\omega(z|\gamma^*)\\
	=&\Res_{B[\Z_p,1]} (z|\gamma^*)^{-k}f(z|\gamma^*)\omega(z|\gamma^*)\\
	=&\Res_B[\Z_p,1]{} z^{-k}f(z)\omega(z)\\
	=&\langle \omega, f\rangle_k.
\end{align*}

\end{proof}
In this way, each $\omega\in\Omega_{-k}(\cZ)$ gives us a continuous linear functional $\mu_\omega:\cA^\dagger(\Z_p,1)\longrightarrow\Q_p$ by $\mu_{\omega}(f)= \Res(f \omega).$  It follows from the above lemma that $$\mu_{\omega|_{-k}\gamma}(f)=\langle \omega|_{-k} \gamma,f\rangle_k=\langle \omega,\gamma^{*-1}\cdot_k f\rangle_k=\det^{-k}(\gamma)\langle \omega,\gamma\cdot_k f\rangle_k=\det^{-k}\gamma \mu_{\omega}(\gamma\cdot_k f).$$ In other words, $\det^{k}(\gamma)\mu_{\omega|_{-k}\gamma}= \mu_{\omega}|_k\gamma$. Therefore, we get a $\Gamma_p$-equivariant map
\begin{equation*}
	\Omega_{-k}(\cZ)(k)\longrightarrow \cD^\dagger_k(\Z_p),
\end{equation*}
where the $(k)$ denotes we have twisted the action of $\Gamma_p$ by $\det^k$. In fact, we can use the boundedness of the coefficients of $\omega\in\Omega^{bd}(\cZ)$ to show, via Proposition 3.1 of \cite{PolSte}, that
\begin{cor}
For each $k\in\Z$, the residue pairing induce a $\Gamma_p$-equivariant homomorphism $\Omega^{bd}_{-k}(\cZ)(k)\longrightarrow\mathbf{D}_k$ by $\omega\mapsto\mu_\omega$.   \end{cor}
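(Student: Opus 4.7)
The plan has two parts: leveraging the $\Gamma_p$-equivariance already established in the preceding proposition, and pinning down that bounded differentials produce functionals landing in the Banach dual $\mathbf{D}_k = D[\Z_p,1]$ rather than merely in $\cD^\dagger_k$.

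First I would check that the subspace $\Omega^{bd}_{-k}(\cZ) \subset \Omega_{-k}(\cZ)$ is $\Gamma_p$-stable. For $\gamma\in\Gamma_p$ the weight factor $(a-bz^{-1})^{-k}$ is a unit of norm $1$ in $A^{bd}(\cZ)$ (since $a\in\Z_p^\times$, $b\in\Z_p$), and the substitution $z\mapsto(az-b)/(d-cz)$, together with the transformation law $dz^{-1}|_0\gamma = \det(\gamma)(a-bz^{-1})^{-2}dz^{-1}$ used in Lemma~\ref{L:S0_equivariant}, preserves boundedness of the Laurent coefficients because $c\in p\Z_p$. Granted this stability, $\Gamma_p$-equivariance of the restricted map $\Omega^{bd}_{-k}(\cZ)(k)\to\mathbf{D}_k$ is automatic as soon as the image actually lies in $\mathbf{D}_k$.

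The substantive point is therefore the boundedness claim, for which I would invoke the moment criterion in Proposition~3.1 of \cite{PolSte}: a distribution $\mu\in\cD^\dagger$ lies in $\mathbf{D}$ exactly when the moments $\mu(z^n)$ are bounded as $n$ varies over $\Z_{\geq 0}$. Using the canonical Laurent description of $\omega\in\Omega^{bd}(\cZ)$ from Proposition~\ref{P:diffs} and the residue formula $\Res(z^m\frac{dz}{z})=\delta_{m,0}$, one identifies $\mu_\omega(z^n) = \Res(z^{n-k}\omega)$ with a single coefficient of $\omega$: either the coefficient $a_{n-k}$ when $n\geq k$, or $b_{k-n}\,p^{k-n}$ when $n<k$. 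Both are bounded in $p$-adic absolute value by $\|\omega\|$; the $n<k$ case even enjoys an extra factor $p^{k-n}$ of non-positive valuation. This places $\mu_\omega$ in $\mathbf{D}_k$ and completes the proof.

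I do not anticipate any substantive obstacle. The only piece requiring care is the case split in the residue computation, and the $\Gamma_p$-stability of $\Omega^{bd}$ is immediate from the explicit shape of the action.
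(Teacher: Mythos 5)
Your argument is correct and follows essentially the same route the paper intends: the $\Gamma_p$-equivariance is inherited from the preceding proposition (the weight-$k$ action was already noted to preserve $\Omega^{bd}(\cZ)$), and membership in $\mathbf{D}_k$ follows by identifying the moments $\mu_\omega(z^n)=\Res(z^{n-k}\omega)$ with the Laurent coefficients $a_{n-k}$ (resp.\ $p^{k-n}b_{k-n}$) of $\omega$ and invoking the boundedness criterion of Proposition 3.1 of \cite{PolSte}, which is exactly the paper's stated justification. One harmless slip: for $n<k$ the factor $p^{k-n}$ has positive (not non-positive) valuation, which is precisely why the bound $|p^{k-n}b_{k-n}|_p\leq \|\omega\|$ holds.
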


\begin{defn}
Denote by $R:\Symb_{\Gamma}(\Omega_{-k}^{bd}(\cZ)(k))\longrightarrow\Symb_{\Gamma}(\bD_k)$ the $\Gamma_p$-homomorphism induced by $\omega\mapsto\mu_\omega$.
\end{defn}

We emphasize that these are \emph{not} $\Sigma_0(p)$-equivariant homomorphisms. The $\Gamma_p$-equivariance is enough to compare the tame Hecke action on both spaces of modular symbols, but the actions of $U_p$ are not directly comparable. The next lemma allows us to compute the values of $R\Phi_{f'}|U_p$ in terms of $\Phi_{f'}|U_p$, at least on divisors where the only poles of $\Phi_{f'}$ are well-controlled.

\begin{lem}\label{L:almost_equivariance}
Let $\gamma\in\Sigma_0(p)$ and suppose $\omega\in\Omega(\cZ)$ has no poles in the annulus $B[\Z_p,1]|\gamma^{-1}-B[\Z_p,1]$. Then, for all $f\in\cA^\dagger(\Z_p)$, $\langle \omega, \gamma\cdot_k f\rangle_k=\det(\gamma)^k\langle \omega|_{-k}\gamma,f\rangle_k$. Put another way, $\mu_\omega|_k\gamma=\det(\gamma)^k\mu_{\omega|_{-k}\gamma}$.
\end{lem}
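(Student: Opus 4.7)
The strategy is to mirror the proof of the preceding $\Gamma_p$-invariance proposition while keeping careful track of how the change of variables alters the region of integration when $\gamma\in\Sigma_0(p)\setminus\Gamma_p$. In that earlier proof, the crucial step used that $\gamma$ stabilized $B[\Z_p,1]$; for general $\gamma\in\Sigma_0(p)$ this only contracts the unit disk, and the hypothesis on $\omega$ is exactly what is needed to compensate.

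Write $\gamma=\begin{pmatrix}a & b \\ c & d\end{pmatrix}$. The first step is to start from $\langle\omega,\gamma\cdot_k f\rangle_k=\Res_{B[\Z_p,1]}(z^{-k}(a+cz)^k f(z|\gamma)\,\omega(z))$ and substitute $z=u|\gamma^*$ (equivalently $u=z|\gamma$). Since $\gamma\gamma^*=\det(\gamma)I$, the M\"obius actions $|\gamma$ and $|\gamma^*$ are mutually inverse, so this substitution is a biholomorphism from $B[\Z_p,1]|\gamma$ onto $B[\Z_p,1]$. A direct computation gives $z^{-k}(a+cz)^k=\det(\gamma)^k(au-b)^{-k}$ and $dz=\det(\gamma)(d-cu)^{-2}\,du$; combining these with $f(z|\gamma)=f(u)$ and the definition $\omega|_{-k}\gamma(u)=(a-bu^{-1})^{-k}\phi^*\omega(u)$ for $\phi(u)=u|\gamma^*$ yields
\[
\langle\omega,\gamma\cdot_k f\rangle_k=\det(\gamma)^k\,\Res_{B[\Z_p,1]|\gamma}\bigl(u^{-k}(\omega|_{-k}\gamma)(u)f(u)\bigr).
\]

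The second step is to replace $\Res_{B[\Z_p,1]|\gamma}$ by $\Res_{B[\Z_p,1]}$. Because $B[\Z_p,1]|\gamma\subseteq B[\Z_p,1]$ for $\gamma\in\Sigma_0(p)$, this reduces to showing the integrand has no poles on the inner annulus $B[\Z_p,1]\setminus B[\Z_p,1]|\gamma$. Using $(a-bu^{-1})^{-k}=u^k(au-b)^{-k}$, the factor $u^{-k}$ cancels and the integrand simplifies to $(au-b)^{-k}f(u)\phi^*\omega(u)$. The pole at $u=b/a$ lies inside $B[\Z_p,1]|\gamma$ (it is the image of $u=0$ under $|\gamma$), the function $f$ is analytic on some $B[\Z_p,r]$ with $r>1$ by overconvergence, and $\phi$ bijects the inner annulus $B[\Z_p,1]\setminus B[\Z_p,1]|\gamma$ onto the outer annulus $B[\Z_p,1]|\gamma^{-1}\setminus B[\Z_p,1]$, on which $\omega$ has no poles by hypothesis. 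Thus $\phi^*\omega$ is pole-free on the inner annulus, the integrand is holomorphic there, the offending residue vanishes, and the claim follows.

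The main obstacle is the algebraic bookkeeping in the first step: one must verify that $(a+cz)^k$, $z^{-k}$, and the Jacobian $dz$ conspire after substitution to yield precisely $\det(\gamma)^k u^{-k}(\omega|_{-k}\gamma)(u)\,du$, and in particular that the apparent pole at $u=0$ produced by $u^{-k}$ is cancelled by the $u^k$ hiding inside the weight factor $(a-bu^{-1})^{-k}$. Without this cancellation the residue on the inner annulus would not be expected to vanish and the lemma would fail.
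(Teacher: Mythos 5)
Your proof is correct and follows essentially the same route as the paper's: the paper simply cites Schneider's change-of-variables identity $\Res_{B}(\omega(z|\gamma))=\Res_{B|\gamma^{-1}}(\omega)$ together with pole-freeness on the annulus $B[\Z_p,1]|\gamma^{-1}-B[\Z_p,1]$, which is exactly your substitution $z=u|\gamma^*$ plus the comparison of residues across the pole-free annulus. The only difference is that you spell out the weight-factor bookkeeping (which the paper leaves implicit, as it repeats the computation of the preceding proposition) and verify that the pole of $(au-b)^{-k}$ lands inside $B[\Z_p,1]|\gamma$ -- a worthwhile detail, but not a different argument.
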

\begin{proof}
The remark on pg. 223 of \cite{Sch84} shows $\Res_{B}(\omega(z|\gamma))=\Res_{B|\gamma^{-1}}(\omega)$. If $\omega$ is holomorphic on $B|\gamma^{-1}-B$, then $\Res_{B|\gamma^{-1}}(\omega)=\Res_{B}(\omega)$.
\end{proof}

\section{Eisenstein symbols}

\subsection{Hecke action}

All computations of Hecke operators will reduce to computing the Hecke action on test functions. For example, a tame Hecke operator $T$ can be written as a sum of double coset representatives $\beta_1,\ldots,\beta_n\in \Gamma_p$, and we get
\begin{equation*}
	\Phi_{f'}|T=\sum_{i=1}^n \Phi_{f'}|\beta_i=\sum_{i=1}^n \Phi_{f'|\beta_i^{*-1}}
\end{equation*}

Thus, we turn to the groups of test functions $\cS(V)$, $\cS(V^{(p)})$ and $\cS(V_p)$ equipped with the adjugate-inverse action. Let $\Gamma$ be a congruence subgroup of $\GL_2(\Q)$, and let $\cS(V)^\Gamma$ denote the subgroup of test functions invariant with respect to $\Gamma$. Then $\cS(V)^\Gamma$ has an action of the Hecke module via the adjugate-inverse action: If $T$ is represented by the coset $\bigcup_{i=1}^r \Gamma \beta_i$, then we write $f|T^{*-1}:=\sum_{i=1}^r f|\beta_i^{*-1}.$ For each $\lambda\in \Q^\times$, we will write $[ \lambda ] :=\begin{pmatrix} \lambda & 0 \\ 0 & \lambda \end{pmatrix}$. Observe that $[ \lambda ] ^* =[ \lambda ]$ and $[ \lambda ]^{-1} = [ \lambda^{-1} ]$. Finally, let $\iota$ denote the Hecke operator represented by $\begin{pmatrix} 1 & 0 \\ 0 & -1\end{pmatrix}$.

First, we consider Hecke eigenvectors for $\Gamma_0(N)$. Observe that, for each prime $q$, the test function $[\Z_q^2]\in\cS(V_q)$ is fixed by $\Gamma_0(N)$. Letting $T_q$, $U_q$ denote the usual Hecke operators for $\Gamma_0(N)$, we have:
\begin{lem}\label{L:Hecke}
For any prime $q$, $[\Z_q^2]|T_q^{*-1}= q[\Z_q^2]|[ q]^{*-1}+[\Z_q^2].$
\end{lem}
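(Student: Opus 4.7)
The statement is a direct computation, so the plan is to unwind the definitions and do the bookkeeping carefully.

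First, I would choose explicit coset representatives. Since $q \nmid N$ in the assumption (the tame setting), the standard choice for $T_q$ on $\Gamma_0(N)$ is the $q+1$ matrices
\begin{equation*}
\beta_b=\begin{pmatrix} 1 & b \\ 0 & q\end{pmatrix}\ (0\leq b<q), \qquad \beta_\infty=\begin{pmatrix} q & 0 \\ 0 & 1\end{pmatrix}.
\end{equation*}
Next I would compute the adjugate-inverse of each representative using the identity $\beta^{*-1}=\beta/\det\beta$:
\begin{equation*}
\beta_b^{*-1}=\tfrac{1}{q}\begin{pmatrix} 1 & b \\ 0 & q\end{pmatrix},\qquad \beta_\infty^{*-1}=\tfrac{1}{q}\begin{pmatrix} q & 0 \\ 0 & 1\end{pmatrix}.
\end{equation*}

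Second, I would apply the definition $(f|\gamma)(v)=f(\gamma v)$ with $f=[\Z_q^2]$. For $\gamma=\beta_b^{*-1}$ the function $[\Z_q^2]|\beta_b^{*-1}$ is the indicator of those $(x,y)\in V_q$ with $y\in\Z_q$ and $x+by\in q\Z_q$. For $\gamma=\beta_\infty^{*-1}$, the function is $[\Z_q\times q\Z_q]$.

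Third, I would sum over $b$ by partitioning according to the divisibility of $y$ by $q$. If $y\in q\Z_q$, the congruence $x+by\equiv 0\pmod{q}$ reduces to $x\equiv 0\pmod{q}$ independently of $b$, so all $q$ values contribute, giving $q\cdot[q\Z_q^2]$ on that piece. If $y\in\Z_q^\times$, then as $b$ ranges over $\{0,\dots,q-1\}$ the residue $-by\pmod q$ takes every value in $\Z/q\Z$ exactly once, so for each $x\in\Z_q$ exactly one $b$ contributes, yielding $[\Z_q\times\Z_q^\times]$ on that piece. Adding in the $\beta_\infty^{*-1}$-term $[\Z_q\times q\Z_q]$ recombines the last two pieces:
\begin{equation*}
[\Z_q^2]|T_q^{*-1}=q\cdot[q\Z_q^2]+[\Z_q\times\Z_q^\times]+[\Z_q\times q\Z_q]=q\cdot[q\Z_q^2]+[\Z_q^2].
\end{equation*}

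Finally, I would identify $q\cdot[q\Z_q^2]$ with $q\cdot[\Z_q^2]|[q]^{*-1}$: since $[q]^{*-1}=[q^{-1}]$, applying it to $[\Z_q^2]$ produces the indicator of $[q]\Z_q^2=q\Z_q^2$, which completes the identity. There is no real obstacle here beyond the partition trick for varying $b$; the only point worth flagging is the convention $\beta^{*-1}=\beta/\det\beta$, which makes the fractional matrices above look slightly unusual but is forced by the author's choice to endow $\cS(V)$ with the adjugate action.
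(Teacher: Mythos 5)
Your proposal is correct and follows essentially the same route as the paper: the same $q+1$ coset representatives, the same adjugate-inverse action $\beta^{*-1}=\beta/\det\beta$, and the same counting over $b$ (you phrase it via congruence conditions on $(x,y)$, the paper by pushing forward lattice cosets, but the bookkeeping is identical). No gaps.
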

\begin{proof}
\begin{align*}
	[\Z_q^2]|T_q^{*-1} 
	=& \sum_{a=0}^{q-1} [\Z_q^2] \left| \begin{pmatrix} 1 & a \\ 0 & q \end{pmatrix}^{*-1}\right.+ [\Z_q^2]\left| \begin{pmatrix} q & 0\\ 0 & 1 \end{pmatrix}^{*-1}\right.\\
	=&\sum_{a=0}^{q-1} \left[ \begin{pmatrix} q & -a\\ 0 & 1\end{pmatrix}\Z_q^2\right] +\left[ \begin{pmatrix} 1 & 0 \\ 0 & q\end{pmatrix}\Z_q^2\right]\\
	=&\sum_{a=0}^{q-1}\sum_{i=0}^{q-1} \left[ \begin{pmatrix} q & -a\\ 0 & 1\end{pmatrix}\left(\begin{pmatrix} 0\\i \end{pmatrix}+\Z_q\times q\Z_q\right)\right] + [\Z_q\times q\Z_q]\\
	=&\sum_{a=0}^{q-1}\sum_{i=0}^{q-1} \left[ \begin{pmatrix} -ai\\i \end{pmatrix}+q\Z_q\times q\Z_q\right] + [\Z_q\times q\Z_q]\\
	=& q[q\Z_q^2]+[\Z_q]\\
	=& q[\Z_q^2]|[ q]^{*-1}+[\Z_q^2]
\end{align*}
\end{proof}
A similar calculation shows  $[\Z_p\times p\Z_p]|U_p^{*-1} = p[(p\Z_p)^2]$ and $[\Z_p\times \Z_p]|U_p^{*-1}= p[(p\Z_p)^2]+[\Z_p\times\Z_p^\times]$. Together, these imply
\begin{lem}\label{L:U_p}
$[\Z_p\times\Z_p^\times]|U_p^{*-1}=[\Z_p\times\Z_p^\times]$.
\end{lem}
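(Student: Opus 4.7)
The proof should be essentially immediate from the two identities stated in the sentence immediately preceding the lemma, since $U_p^{*-1}$ acts $\Z$-linearly on $\cS(V_p)$ and the test functions involved satisfy an obvious additive decomposition.

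The plan is to observe that $\Z_p = p\Z_p \sqcup \Z_p^\times$ gives the identity of test functions
\[
[\Z_p\times\Z_p] = [\Z_p\times p\Z_p] + [\Z_p\times\Z_p^\times].
\]
Applying the $\Z$-linear operator $U_p^{*-1}$ and using the two preceding identities $[\Z_p\times p\Z_p]|U_p^{*-1}=p[(p\Z_p)^2]$ and $[\Z_p\times\Z_p]|U_p^{*-1}=p[(p\Z_p)^2]+[\Z_p\times\Z_p^\times]$, I would simply subtract to obtain
\[
[\Z_p\times\Z_p^\times]|U_p^{*-1} = [\Z_p\times\Z_p]|U_p^{*-1} - [\Z_p\times p\Z_p]|U_p^{*-1} = [\Z_p\times\Z_p^\times].
\]

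If instead one wants a direct verification (not relying on the second auxiliary identity), the approach would mirror the computation in Lemma \ref{L:Hecke}: write $U_p$ as the coset sum represented by $\beta_a=\begin{pmatrix}1 & a\\ 0 & p\end{pmatrix}$ for $a=0,\ldots,p-1$, compute $\beta_a^{*-1}=\tfrac{1}{p}\begin{pmatrix}1 & a\\ 0 & p\end{pmatrix}$, and determine for which $(x,y)\in\Q_p^2$ the vector $\beta_a^{*-1}(x,y)^t=((x+ay)/p,\,y)^t$ lies in $\Z_p\times\Z_p^\times$. The condition $y\in\Z_p^\times$ is preserved, and for each such $y$ the congruence $x\equiv -ay\pmod p$ sweeps out every residue class modulo $p$ exactly once as $a$ ranges over $\{0,\ldots,p-1\}$. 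Summing, the support collapses back to $\Z_p\times\Z_p^\times$ with multiplicity $1$.

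There is no real obstacle: the first route is a one-line linearity manipulation, and the second is a direct coset computation identical in flavor to the proof of Lemma \ref{L:Hecke}. The only thing to be careful about is keeping track of the adjugate-inverse convention (as opposed to the inverse convention), but this is already set up in the discussion preceding Lemma \ref{L:Hecke}.
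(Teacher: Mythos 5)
Your first route is exactly the paper's own argument: the lemma is stated there as an immediate consequence ("Together, these imply") of the two identities $[\Z_p\times p\Z_p]|U_p^{*-1}=p[(p\Z_p)^2]$ and $[\Z_p\times\Z_p]|U_p^{*-1}=p[(p\Z_p)^2]+[\Z_p\times\Z_p^\times]$, combined by $\Z$-linearity with the decomposition $[\Z_p\times\Z_p]=[\Z_p\times p\Z_p]+[\Z_p\times\Z_p^\times]$. Your direct coset verification with $\beta_a=\begin{pmatrix}1&a\\0&p\end{pmatrix}$ is also correct and consistent with the convention used in the proof of Lemma \ref{L:Hecke}, so the proposal is fine as written.
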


Given a prime $\ell$ distinct from $p$, we will write $f_\ell'$ for the test function
\begin{equation*}
	f'_\ell=\left(\bigotimes_{q\neq p,\ell} [\Z_q^2] \right)\otimes([\Z_\ell^2]-\ell[\Z_\ell\times\ell\Z_\ell]),
\end{equation*}
which factorizes as $f_1\times f_2$, with
\begin{equation*}
	f_1'=\bigotimes_{q\neq p} [\Z_q] \text{ and } f_2' =\left(\bigotimes_{q\neq p,\ell}[\Z_q]\right)\otimes([\Z_\ell]-\ell[\ell \Z_\ell]).
\end{equation*}
Observe that the stabilizer of $f'_\ell$ is $\Gamma_0(\ell)$; the previous lemmas show
\begin{prop}
The test function $f'_{\ell}$ is a Hecke-eigenvector with eigenvalues
\begin{enumerate}
	\item $f'_\ell |T_q^{*-1}=f_\ell'+qf_\ell'|[q]^{*-1}$ for $q\nmid \ell p$
	\item $f'_\ell|\iota^{*-1}=f_\ell'$.
\end{enumerate}
\end{prop}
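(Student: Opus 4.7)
Both parts are local calculations that reduce to computations at a single prime. The plan is to exploit the tensor product decomposition of $f'_\ell$ across primes and reduce part (1) to Lemma \ref{L:Hecke}, while part (2) will follow from a direct symmetry argument on the first coordinate.

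For part (1), I would factor $f'_\ell = [\Z_q^2]\otimes h_q$, where $h_q\in\cS(V^{(p,q)})$ is the tensor product of the local test functions at all primes other than $p$ and $q$ (note $q\nmid \ell p$, so $h_q$ includes the $\ell$-factor $[\Z_\ell^2]-\ell[\Z_\ell\times\ell\Z_\ell]$). The Hecke operator $T_q$ for $\Gamma_0(\ell)$ is a sum of coset representatives lying in $M_2(\Z)$ with determinant $\pm q$; hence at every prime $q'\neq q$ these matrices lie in $\GL_2(\Z_{q'})$, so their adjugate-inverses preserve $[\Z_{q'}^2]$. Consequently $T_q^{*-1}$ acts on $f'_\ell$ only through the $q$-th factor. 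Lemma \ref{L:Hecke} then gives $[\Z_q^2]|T_q^{*-1}=[\Z_q^2]+q[\Z_q^2]|[q]^{*-1}$. Since $q\in\Z_{q'}^\times$ for every $q'\neq q$, the global operator $[q]^{*-1}$ also fixes $h_q$, so assembling the factors yields $f'_\ell|T_q^{*-1}=f'_\ell+qf'_\ell|[q]^{*-1}$, as required.

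For part (2), I would compute $\iota^{*-1}$ explicitly: since $\iota^2=I$ one has $\iota^{-1}=\iota$, and $\det(\iota)=-1$, so $\iota^{*-1}=-\iota=\begin{pmatrix} -1 & 0 \\ 0 & 1\end{pmatrix}$. This matrix acts on $V$ by $(x,y)^T\mapsto(-x,y)^T$. Using the factorization $f'_\ell=f'_1\times f'_2$ from the excerpt, the action reads $(f'_\ell|\iota^{*-1})(x,y)=f'_1(-x)f'_2(y)$. Since $f'_1=\bigotimes_{q\neq p}[\Z_q]$ is the characteristic function of $\hat\Z^{(p)}\subset\bA_\Q^{(\infty,p)}$, which is closed under negation, one has $f'_1(-x)=f'_1(x)$, and therefore $f'_\ell|\iota^{*-1}=f'_\ell$.

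Neither step is deep; the entire argument is routine bookkeeping with the local factorization. The only place requiring real care is tracking the adjugate-inverse convention $\gamma^*=\det(\gamma)\gamma^{-1}$ through the signs and inverses, particularly in establishing that $\iota^{*-1}$ negates the first coordinate rather than the second (which is where the compatibility with the factorization $f'_1\times f'_2$ crucially enters).
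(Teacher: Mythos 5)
Your route---factor $f'_\ell$ over the primes, reduce $T_q$ to the local computation of Lemma \ref{L:Hecke}, and treat $\iota$ by computing $\iota^{*-1}=\begin{pmatrix}-1&0\\0&1\end{pmatrix}$ explicitly---is exactly the reduction the paper intends (it simply says ``the previous lemmas show''), and your part (2) is correct; in fact both factors $f_1',f_2'$ are invariant under negation of their variable, so the argument works whichever coordinate $\iota^{*-1}$ negates.

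There is, however, a gap in the justification of part (1) at the prime $\ell$. You argue that the coset representatives for $T_q$ have determinant $q$, hence lie in $\GL_2(\Z_{q'})$ for every $q'\neq q$, and that their adjugate-inverses therefore preserve $[\Z_{q'}^2]$; you then conclude that $T_q^{*-1}$ acts only through the $q$-th factor. That reasoning covers the primes where the local factor of $h_q$ is $[\Z_{q'}^2]$, but at $\ell$ the local factor is $[\Z_\ell^2]-\ell[\Z_\ell\times\ell\Z_\ell]$, and membership in $\GL_2(\Z_\ell)$ does \emph{not} imply preservation of $\Z_\ell\times\ell\Z_\ell$ (e.g.\ $\begin{pmatrix}0&1\\1&0\end{pmatrix}$ swaps it with $\ell\Z_\ell\times\Z_\ell$). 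What you need---and what is true---is that the representatives of the $\Gamma_0(\ell)$-double coset defining $T_q$ automatically have lower-left entry divisible by $\ell$ (the standard choices $\begin{pmatrix}1&a\\0&q\end{pmatrix}$, $\begin{pmatrix}q&0\\0&1\end{pmatrix}$ are even upper triangular); such a matrix $\beta$, and likewise $\beta^{*-1}=q^{-1}\beta$ since $q\in\Z_\ell^\times$, carries $\Z_\ell\times\ell\Z_\ell$ bijectively to itself, so the $\ell$-factor of $h_q$ is indeed fixed. With that one sentence added, your argument for (1) is complete and agrees with the paper's.
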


%

Next, we turn to the congruence groups $\Gamma_1(N)$. Let $\tau$ be a primitive Dirichlet character of conductor $L$, $\psi$ a primitive dirichlet character of conductor $M$, and let $N=ML$. Define a test function $f'_{\tau,\psi}$ away from $p$ by specifying
\begin{equation}\label{E:filled_in}
	f'_{\tau,\psi}\otimes[\Z_p^2]=\sum_{\substack{i=1\ldots L\\ j=1\ldots M}} \tau^{-1}(j)\psi(i)\left[\begin{pmatrix} i\\ jL\end{pmatrix} + L\Z\times N\Z\right].
\end{equation}
\begin{rem}\label{R:factorization}
The test function $f'_{\tau,\psi}\otimes[\Z_p^2]$ is factorizable: $f'_{\tau,\psi}\otimes[\Z_p^2]=\left(\sum_{i=1}^M\tau^{-1}(i)[i+L\Z]\right)\times \left(\sum_{j=1}^M \psi(j)[jL + N\Z]\right)$. It follows that $f_{\tau,\psi}'$ is factorizable, a fact  we will use when computing $p$-adic $L$-functions.
\end{rem}
It is not hard to see that $f'_{\tau,\psi}$ is fixed by $\Gamma_1(N)$. Indeed, each summand of the honest test function in (\ref{E:filled_in}) is $\Gamma_1(N)$-invariant. A lengthy but elementary calculation shows that $f'_{\tau,\psi}$ is a Hecke-eigenvector with eigenvalues corresponding to the Eisenstein series $E_{\tau,\psi}$. We record the statement but omit the proof. 
\begin{prop}
The test function $f'_{\tau,\psi}$ is a Hecke-eigenvector with eigenvalues
\begin{enumerate}
	\item $f'_{\tau,\psi} |\langle a \rangle^{*-1} = \tau(a)\psi(a)f'_{\tau,\psi}$
	\item $f'_{\tau,\psi}|T_q^{*-1}=\tau(q)f_{\tau,\psi}+q\psi(q)f_{\tau,\psi}|[q]^{*-1}$, for $q\nmid Np$.
	\item $f'_{\tau,\psi}|\iota^{*-1}=\psi(-1)pf'_{\tau,\psi}$
\end{enumerate}
\end{prop}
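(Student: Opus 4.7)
The plan is to exploit the factorization $f'_{\tau,\psi}\otimes[\Z_p^2]=F_1\times F_2$ recorded in Remark~\ref{R:factorization}, where $F_1$ is a character-weighted one-variable test function on the $x$-axis (with weights $\psi(i)$ on residues $i \bmod L$) and $F_2$ likewise on the $y$-axis (with weights $\tau^{-1}(j)$ on residues $jL \bmod N$). All of the Hecke operators in question admit representatives that are upper-triangular or diagonal, so their adjugate-inverses preserve the product structure. Each assertion therefore reduces to a pair of one-variable computations describing how the characters transform under translations, dilations, and sign-flips of their arguments, in close analogy with the proof of Lemma~\ref{L:Hecke}.

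For~(1), pick $\gamma_a \in \Gamma_0(N)$ with diagonal entries $\equiv (a^{-1}, a) \bmod N$. The matrix $\gamma_a^{*-1}$ then rescales the $x$-coordinate by $a$ and the $y$-coordinate by $a^{-1}$ modulo $N$; relabelling residues pulls $\psi(a)$ out of the $x$-factor and $\tau(a)$ out of the $y$-factor. For~(2), use the coset decomposition $T_q = \langle q\rangle\begin{pmatrix} 1 & 0 \\ 0 & q\end{pmatrix}\sqcup\bigsqcup_{a=0}^{q-1}\begin{pmatrix} q & a \\ 0 & 1\end{pmatrix}$. Applying the adjugate-inverses of the upper-triangular representatives to $F_1\times F_2$ and summing over $a$ produces a Fourier-style collapse in the $x$-direction exactly as in Lemma~\ref{L:Hecke}, yielding $\tau(q) f'_{\tau,\psi}$; the remaining diagonal piece produces $q\psi(q) f'_{\tau,\psi}|[q]^{*-1}$, after applying~(1) to absorb the $\langle q\rangle$-twist. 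Summing gives the stated eigenvalue.

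For~(3), the adjugate-inverse $\iota^{*-1} = \begin{pmatrix} -1 & 0 \\ 0 & 1\end{pmatrix}$ acts by $(x,y)\mapsto(-x,y)$, fixing $F_2$ while sending $F_1(x)$ to $F_1(-x)$; the reindexing $i\mapsto -i$ pulls out $\psi(-1)$. The factor of $p$ in the stated eigenvalue must be traced to a residual contribution from the $p$-component of the symbol (presumably an interaction between $\iota$ and the relevant trivialisation $[\Z_p\times\Z_p^\times]|U_p^{*-1}=[\Z_p\times\Z_p^\times]$ of Lemma~\ref{L:U_p}, or a convention about how $\iota$ is lifted to a full Hecke representative on the $\Gamma_1(Np)$-level test function), and this is the one point where the argument is not purely formal from the one-variable pictures.

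The main obstacle throughout is the book-keeping: one must carefully track how each $\gamma^{*-1}$ permutes the cosets $\binom{i}{jL}+L\Z\times N\Z$, reindex the double sum, and match character values, reconciling the repeated transitions between $\gamma$, $\gamma^{-1}$, and $\gamma^{*-1}$. Once the factorization $f'_{\tau,\psi}\otimes[\Z_p^2]=F_1\times F_2$ is in place and the template of Lemma~\ref{L:Hecke} is followed, the verifications in~(1)--(3) are mechanical but tedious, which is presumably why the paper records only the statement.
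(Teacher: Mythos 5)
The paper itself omits the proof (it records the statement as "a lengthy but elementary calculation"), and your overall plan — compute directly on the factorized test function with explicit coset representatives, following the template of Lemma \ref{L:Hecke} — is certainly the intended one. But two of your three items have genuine problems as sketched. For (2), the decomposition you write is not a left-coset decomposition at level $\Gamma_1(N)$: since $\begin{pmatrix}1&a\\0&1\end{pmatrix}\in\Gamma_1(N)$, all the matrices $\begin{pmatrix}q&a\\0&1\end{pmatrix}$ lie in a single left coset, so the sum you describe is not $T_q$ in the paper's convention ($T$ given by $\bigcup_i\Gamma\beta_i$, as in Lemma \ref{L:Hecke}). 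More importantly, your attribution of terms is not how the computation comes out, and it even contradicts the template you cite: in Lemma \ref{L:Hecke} the $q$ translation cosets produce the scaled term $q[\Z_q^2]|[q]^{*-1}$ \emph{plus only part} of the unscaled term, with the remaining diagonal coset completing the unscaled term. The same happens here: writing $f=F_1\times F_2$, the diamond-twisted diagonal coset acts by $(x,y)\mapsto\sigma_q\cdot(x,y/q)$ (up to convention), giving $\tau(q)\psi(q)F_1(x)F_2(y/q)$, which is supported only on $\{q\mid y\}$ and is \emph{not} proportional to $f'|[q]^{*-1}$ (that function lives on $\{q\mid x,\,q\mid y\}$); and the translation cosets do not sum to a multiple of $f'$ — with the naive representatives $\begin{pmatrix}1&a\\0&q\end{pmatrix}$ the values involve $\tau^{-1}\bigl((x+ay)\bmod L\bigr)$, which depends on $a$, so one must first replace $a$ by $aN$ (legitimate since $\gcd(N,q)=1$) before any "collapse" occurs. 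Only after these adjustments do the two partial contributions reassemble into $\tau(q)f'+q\psi(q)f'|[q]^{*-1}$. Your sketch, executed literally, would not produce the stated identity.

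For (3), the factor $p$ cannot be "traced to the $p$-component": $\iota^{*-1}=\begin{pmatrix}-1&0\\0&1\end{pmatrix}$ fixes $[\Z_p^2]$ and $[\Z_p\times\Z_p^\times]$, so no $p$ can emerge from that direction, and Lemma \ref{L:U_p} is irrelevant here. The honest computation gives eigenvalue $\psi(-1)$ with the convention of (\ref{E:filled_in}) (equivalently $\tau(-1)$ with the convention of Remark \ref{R:factorization}; these agree under the parity $\psi(-1)\tau(-1)=(-1)^k$ in force), with no factor of $p$ — and indeed the paper's own use of this item in the proof of the subsequent corollary is $\Phi_{\tau,\psi}^{-k}|\iota=(-1)^{k+2}\psi(-1)\Phi_{\tau,\psi}^{-k}$, consistent with eigenvalue $\psi(-1)$ only. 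So the $p$ in the printed statement should be identified as spurious rather than hunted for; your speculation points at a mechanism that does not exist. Finally, a small remark on (1): with your choice $\gamma_a\equiv\operatorname{diag}(a^{-1},a)\bmod N$ one has $\gamma_a^{*-1}=\gamma_a$, which rescales the $x$-residue by $a^{-1}$ and the $y$-residue by $a$ — the opposite of what you wrote — and note that Remark \ref{R:factorization} puts $\tau^{-1}$ on the $x$-axis and $\psi$ on the $y$-axis (the display (\ref{E:filled_in}) says the opposite; the paper is inconsistent, and the later $L$-function computation follows the Remark). You reach the stated eigenvalue only because these two slips compensate; with your stated conventions carried out correctly the answer would be $\overline{\tau\psi}(a)$.
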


Let us write $\Phi_{\ell}:=\Phi_{f'_\ell}$ and $\Phi_{\tau,\psi}:=\Phi_{f'_{\tau,\psi}}$. Tracing these eigenvalues through the $\Gamma_p$-equivariant compositions $\Symb_\Gamma(\widetilde{\cD}(\Z_p^2)/\delta_0)(k+1)\rightarrow\Symb_\Gamma(\Omega_{-k}^{bd}(\cZ))(k)\rightarrow\Symb_{\Gamma}(\mathbf{D}_k)$, we arrive at the corollary
\begin{cor}
Let $k\geq 0$ be an integer. The modular symbol $R\Phi_{\ell}^{-k}\in\Symb_{\Gamma_0(p\ell)}(\mathbf{D}_k)$ has Hecke eigenvalues
\begin{enumerate}
	\item[(1a)] $R(\Phi_{\ell}^{-k})|T_q=R\Phi_{\ell}^{-k}+q^{k+1}R\Phi_{\ell}^{-k}$, for $q\nmid  \ell p$.
	\item[(2a)] $R\Phi_{\ell}^{-k}|\iota=(-1)^{k+2} R\Phi_{\ell}^{-k}$. 
\end{enumerate}
The modular symbol $R\Phi_{{\tau,\psi}}^{-k}\in\Symb_{\Gamma_1(N)\cap\Gamma_0(p)}(\mathbf{D}_k)$ has Hecke eigenvalues
\begin{enumerate}
	\item[(1b)] $R\Phi_{{\tau,\psi}}^{-k}a |\langle a \rangle = \tau(a)\psi(a)R\Phi_{{\tau,\psi}}^{-k}$
	\item[(2b)] $R(\Phi_{{\tau,\psi}}^{-k})|T_q=\psi(q)R\Phi_{\tau,\psi}^{-k}+q^{k+1}\tau(q)R\Phi_{{\tau,\psi}}^{-k}$, for $q\nmid  Np$.
	\item[(3b)] $R\Phi_{\tau,\psi}^{-k}|\iota=(-1)^{k+2}\psi(-1) R\Phi_{\tau,\psi}^{-k}$. 
\end{enumerate}
\end{cor}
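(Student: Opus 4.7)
The plan is to prove the corollary as a direct bookkeeping computation: transport the Hecke eigenvalue identities on test functions (established in the two preceding propositions) through the $\Gamma_p$-equivariant composition
\begin{equation*}
\Symb_\Gamma\bigl(\widetilde{\cD}(\Z_p^2)/\delta_0\bigr)(k+1) \longrightarrow \Symb_\Gamma\bigl(\Omega_{-k}^{bd}(\cZ)\bigr)(k) \longrightarrow \Symb_\Gamma(\bD_k),
\end{equation*}
tracking the twists by $\det$ at each stage. The first step is to upgrade each test-function identity to one in $\Symb_\Gamma(\widetilde{\cD}(\Z_p^2)/\delta_0)$. Every matrix $\beta$ appearing in the double-coset decomposition of $T_q$, $\langle a\rangle$ or $\iota$ has $\det(\beta)$ prime to $p$ (this is precisely why the restrictions $q\nmid \ell p$ and $q\nmid Np$ are imposed in (1a) and (2b)); in particular $\beta\in\Gamma_p$ and Lemma \ref{L:S0_action} yields $\Phi_{f'}|\beta = \Phi_{f'|\beta^{*-1}}$. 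Summing and using linearity of the construction $f'\mapsto\Phi_{f'}$ converts the propositions into identities such as
\begin{equation*}
\Phi_{f'_\ell}|T_q = \Phi_{f'_\ell} + q\,\Phi_{f'_\ell}|[q], \qquad \Phi_{f'_{\tau,\psi}}|T_q = \tau(q)\Phi_{f'_{\tau,\psi}} + q\psi(q)\,\Phi_{f'_{\tau,\psi}}|[q],
\end{equation*}
together with the analogous diamond and $\iota$ identities.

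The second step is to push these identities down the composition. By Lemma \ref{L:S0_equivariant} combined with the residue-pairing corollary, the source twist $(k+1)$ means that a source eigenvalue equation $\Phi_{f'}|\gamma = \lambda\,\Phi_{f'}$ transforms into $R\Phi_{f'}^{-k}|_k\gamma = \det(\gamma)^{k+1}\lambda\,R\Phi_{f'}^{-k}$ in $\Symb_\Gamma(\bD_k)$. The one non-eigenvalue term on the source side is $\Phi_{f'}|[q]$; here I would exploit that the scalar matrix $[q]$ acts trivially on $\cZ$ and, via $[q]\cdot_k f = q^k f$, as the scalar $q^k$ on $\bD_k$, so equivariance forces the image of $\Phi_{f'}|[q]$ in $\bD_k$ to equal $q^{-k-2}R\Phi_{f'}^{-k}$. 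Applying this to the $T_q$-identity above (and remembering the twist factor $q^{k+1}$ attached to each $T_q$-coset, whose matrices have determinant $q$) yields
\begin{equation*}
R\Phi_\ell^{-k}|T_q = q^{k+1}\bigl(R\Phi_\ell^{-k} + q\cdot q^{-k-2}R\Phi_\ell^{-k}\bigr) = \bigl(1+q^{k+1}\bigr)R\Phi_\ell^{-k},
\end{equation*}
matching (1a), and analogously reproduces $\psi(q)+q^{k+1}\tau(q)$ in (2b). The diamond case (1b) is immediate because $\det(\langle a\rangle)=1$, so the source eigenvalue $\tau(a)\psi(a)$ passes through unchanged; the $\iota$ eigenvalues in (2a) and (3b) drop out of $\det(\iota)^{k+1}$ combined with the source eigenvalues $1$ and $\psi(-1)$ coming from Lemma \ref{L:S0_action}.

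There is no substantive obstacle here: the entire genuine content has been absorbed into the $\Gamma_p$-equivariance of the two constituent maps (Lemma \ref{L:S0_equivariant} and the residue-pairing corollary) together with Lemma \ref{L:S0_action}. The only bookkeeping caveat is to verify that every matrix representing a Hecke coset actually lies in $\Gamma_p$, which is exactly what forces the conditions $q\nmid\ell p$ and $q\nmid Np$ in the statement.
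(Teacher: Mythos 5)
Your handling of the tame operators and the diamond operators is essentially the paper's own argument: reduce to the test-function identities via Lemma \ref{L:S0_action}, push through the composition $\Symb_\Gamma(\widetilde{\cD}(\Z_p^2)/\delta_0)(k+1)\to\Symb_\Gamma(\Omega^{bd}_{-k}(\cZ))(k)\to\Symb_\Gamma(\mathbf{D}_k)$ picking up $\det(\beta)^{k+1}$ per coset, and use $\Phi_{f'|[q]^{*-1}}^{-k}=q^{-k-2}\Phi_{f'}^{-k}$ to absorb the non-eigen term; your computations for (1a), (1b), (2b) match the paper's.

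There is, however, a genuine gap in your treatment of the involution, i.e.\ claims (2a) and (3b). Your stated recipe, ``a source eigenvalue $\lambda$ becomes $\det(\gamma)^{k+1}\lambda$ downstairs,'' applied to $\iota$ with source eigenvalues $1$ and $\psi(-1)$, yields $(-1)^{k+1}$ and $(-1)^{k+1}\psi(-1)$, which is off by a sign from the asserted eigenvalues $(-1)^{k+2}$ and $(-1)^{k+2}\psi(-1)$. The missing ingredient is that $\Phi$ is constructed as a $\GL_2^+(\Q)$-modular symbol, and the extension of the action on the target $\widetilde{\cR}$ to matrices of negative determinant carries an extra twist by $\sign(\det)$ (coming from the orientation of the cones in Solomon's construction); the paper's proof explicitly inserts this factor, writing $\Phi_{\tau,\psi}^{-k}|\iota=(-1)^{k+1}\sign(-1)\Phi_{f'_{\tau,\psi}|\iota}^{-k}=(-1)^{k+2}\psi(-1)\Phi_{\tau,\psi}^{-k}$. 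Relatedly, you cannot apply Lemma \ref{L:S0_action} to $\iota$ verbatim, since its proof rests on the $\GL_2^+(\Q)$-invariance of $\Phi$ and $\det\iota=-1$. This is not mere bookkeeping: the sign of the $\iota$-eigenvalue decides whether $R\Phi^{-k}_{f'}$ lies in the plus or minus part of $\Symb_\Gamma(\mathbf{D}_k)$, which is exactly what is needed later to compare with Bella\"iche's one-dimensional eigenspace $\Symb_\Gamma^{\psi(-1)}(\mathbf{D}_k)[E^{crit}_{k+2,\psi,\tau}]$, so the argument must account for the $\sign(\det)$ twist to be complete.
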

\begin{proof}
We only show the second half of the corollary, the first following from the same arguments: Claim (1b) is immediate. In order to see (2b), note that each of the double coset representatives for $T_q$ have determinant $q$. Therefore, $R\Phi_{\tau,\psi}^{-k}|T_q=R(q^k \Phi_{\tau,\psi}^{-k}|T_q)=R\pi_{-k}(q^{k+1}\Phi_{\tau,\psi}|T_q)$. Using the fact that  $\Phi_{f'|[q]^{*-1}}^{-k}=q^{-k-2}\Phi_{f'}^{-k}$, we get $R(\Phi_{\tau,\psi}^{-k}|T_q)= q^{k+1}( \tau(q)R\Phi_{\tau,\psi}^{-k} +q^{k-2}q\psi(q) R\Phi_{\tau,\psi}^{-k})=\psi(q)R\Phi_{\tau,\psi}^{-k}+q^{k+1}\tau(q)R\Phi_{{\tau,\psi}}^{-k}$, as desired. The third claim follows in the same way, remembering that the action of $\GL_2(\Q)$ on $\widetilde{\cR}$ is twisted by $\sign(\det)$: So $\Phi_{\tau,\psi}^{-k}|\iota = (-1)^{k+1}\sign(-1)\Phi_{f'_{\tau,\psi}|\iota}^{-k} = (-1)^{k+2}\psi(-1)\Phi_{\tau,\psi}$.
\end{proof}

Now we turn to $U_p$:  For each $i=1,\ldots,p$, write $\beta_i$ for the usual double coset representative $\beta_i=\begin{pmatrix} p & i \\ 0 & 1\end{pmatrix}$. It should be pointed out that applying $\beta_i$ to a general differential form $\omega\in\Omega_{k}(\cZ)$ gives us a differential form on the annulus $\frac{1}{p}\beta_i\cZ$, and these annuli will have no common points. Thus it is not clear how to define an action of $U_p$ on $\Symb_{\Gamma}(\Omega_{k}(\cZ))$. However, a consequence of the next lemma is that applying $\beta_i$ to the values of $\Phi_{f'}^k$ gives differential forms which do converge on the annulus $\cZ$, and we can compute $\Phi_{f'}^k|U_p$.

\begin{lem}
Let $k\in\Z$ be any integer. If there is some $\lambda$ for which $f'|\beta_i^{*-1}=\lambda f'$ for all $i=1,\ldots,p$, then,  $\Phi_{f'}^k$ is a $U_p$-eigensymbol with eigenvalue $p\lambda$. 
\end{lem}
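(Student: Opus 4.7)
The plan is to transfer the $U_p$ computation back to the level of the global Shintani symbol $\Psi$, where the $\GL_2^+(\Q)$-invariance gives strong control. Unwinding the definitions, one has
\begin{equation*}
\Phi_{f'}^k(D) \;=\; \pi_k\bigl(\Psi(D)(f' \otimes [\Z_p \times \Z_p^\times])\bigr).
\end{equation*}
The main subtlety is that the individual terms $\Phi_{f'}^k(\beta_i D)|_k\beta_i$ need not a priori lie in $\Omega_k(\cZ)$, since the weight-$k$ action of $\beta_i$ moves the annulus of convergence. I will verify the eigensymbol relation $\Phi_{f'}^k|U_p = p\lambda\, \Phi_{f'}^k$ by computing each term formally through $\pi_k$ and showing the sum reassembles into a well-defined element of $\Omega_k(\cZ)$.

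The first step is to apply the formal identity $\pi_k(F)|_k\gamma = \det(\gamma)\, \pi_k(F|\gamma)$ extracted from the proof of Lemma \ref{L:S0_equivariant}; although the lemma is stated only for $\gamma \in \Gamma_p$ (to ensure both sides land in $\Omega_k(\cZ)$), its proof is a direct formal manipulation of power series valid for any $\gamma \in \GL_2(\Q)$. Taking $\gamma = \beta_i$, so $\det(\gamma) = p$, and $F = \Psi(\beta_i D)(f' \otimes [\Z_p \times \Z_p^\times])$ yields
\begin{equation*}
\Phi_{f'}^k(\beta_i D)|_k\beta_i \;=\; p\, \pi_k\bigl(\Psi(\beta_i D)(f' \otimes [\Z_p \times \Z_p^\times])|\beta_i\bigr).
\end{equation*}

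Next, I would use the $\GL_2^+(\Q)$-invariance $\Psi(\beta_i D)|\beta_i = \Psi(D)$, which rearranges to $\Psi(\beta_i D)(h)|\beta_i = \Psi(D)(h|\beta_i^{*-1})$ after cancelling $\beta_i^*$ against $\beta_i^{*-1}$. Combined with the factorization $(f' \otimes [\Z_p \times \Z_p^\times])|\beta_i^{*-1} = (f'|\beta_i^{*-1}) \otimes ([\Z_p \times \Z_p^\times]|\beta_i^{*-1})$ and the hypothesis $f'|\beta_i^{*-1} = \lambda f'$, the bracketed expression simplifies to $\lambda\, \Psi(D)\bigl(f' \otimes ([\Z_p \times \Z_p^\times]|\beta_i^{*-1})\bigr)$. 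Summing over $i$ and pulling out $\pi_k$ and the linearity of $\Psi(D)(f' \otimes \cdot)$ give
\begin{equation*}
(\Phi_{f'}^k|U_p)(D) \;=\; p\lambda\, \pi_k\Bigl( \Psi(D)\Bigl(f' \otimes \sum_{i=1}^p [\Z_p \times \Z_p^\times]|\beta_i^{*-1}\Bigr)\Bigr).
\end{equation*}
The interior sum collapses to $[\Z_p \times \Z_p^\times]$ by Lemma \ref{L:U_p}, so the right-hand side is exactly $p\lambda\, \Phi_{f'}^k(D)$.

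The one real obstacle is the convergence issue flagged at the outset: to apply the $\pi_k$-identity term by term, one must interpret the intermediate objects in a common ambient space of formal Laurent series (or appeal to the rational-poles description of Lemma \ref{L:pole_desc}) so that the formal manipulations make sense before any argument about which annulus they converge on. The recombination into an honest element of $\Omega_k(\cZ)$ is then automatic from the final identification with $p\lambda\, \Phi_{f'}^k(D)$; everything else is bookkeeping with the equivariance properties already established.
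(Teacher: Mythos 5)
Your proof is correct and follows essentially the same route as the paper's: both unwind $\Phi_{f'}^k$ through $\pi_k\circ\cF^*$ back to the global symbol $\Psi$, apply the formal $\pi_k$-equivariance to the matrices $\beta_i$, use the $\GL_2^+(\Q)$-invariance of $\Psi$ together with the hypothesis $f'|\beta_i^{*-1}=\lambda f'$, and collapse the sum via Lemma \ref{L:U_p}. Your explicit handling of the annulus-of-convergence subtlety simply makes precise the remark the paper places immediately before the lemma, so there is no substantive difference.
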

\begin{proof}
Before peeling back the layers of notation, let us emphasize that we should view the computations at $p$ as conceptually identical to the computations away from $p$: the fact that our $p$-adic pseudo-distribution comes from a global distribution allows us to move all the Hecke computations back to test functions. 

To fix ideas, pick a positive integer $k>0$ and a divisor $D$. Applying $U_p$ to $\Phi_{f'}^k\in \Symb_{\Gamma}(\Omega^{bd}(\cZ))(1)$ and evaluating at $D$ gives
\begin{align*}
	(\Phi_{f'}^k|U_p)(D)=\sum_{i=1}^p \det(\beta_i)\Phi_{f'}^k(\beta_i D) |_k\beta_i \\
					=\sum_{i=1}^p\det(\beta_i)\pi_k(\cF^*(\Phi_{f'}(\beta_i D))|\beta_i)\\
					=p\cdot\pi_k\left(\sum_{i=1}^p\cF^*(\Phi_{f'}(\beta_i D))|\beta_i\right).
\end{align*}
Unfolding definitions, $\cF^*\Phi_{f'}(\beta D)|\beta =\Psi(\beta D)(f'\otimes[\Z_p\times\Z_p^\times]) |\beta$; the fact that $\Psi$ is a $\GL_2^+(\Q)$-modular symbol gives $\Psi(\beta D)(f'\otimes [\Z_p\times\Z_p^\times]) |\beta=\Psi(D)(f'\otimes[\Z_p\times\Z_p^\times]|\beta^{*-1})$. Our hypothesis on $f'$ tells us $(f'\otimes[\Z_p\times\Z_p^\times])|\beta_i^{*-1}=\lambda f'\otimes([\Z_p\times\Z_p^\times]|\beta_i^{*-1})$, leaving
\begin{align*}
		\sum_{i=1}^p\cF^*(\Phi_{f'}(\beta_i D))|\beta_i)
		=\lambda \sum_{i=1}^p  \Psi(D)(f'\otimes([\Z_p\times\Z_p^\times]|\beta_i^{*-1}))\\
		=\lambda\Psi(D)(f'_{\tau,\psi}\otimes([\Z_p\times\Z_p^\times]|U_p)).
\end{align*}
Applying Lemma \ref{L:U_p}, we see that this is equal to $\lambda\cF^*\Phi_{f'}(D)$. Thus $$\Phi_{f'}^k|U_p(D)=p\pi_k(\lambda\cF^*(\Phi_{f'}(D)))=p\lambda\Phi_{f'}^k(D).$$
\end{proof}

\subsection{Critical slope Eisenstein symbols}
In this section, we show that the modular symbols $R\Phi^{-k}_{f'}$ have the same $p$-adic $L$-functions as critical slope Eisenstein overconvergent modular symbols for good choices of $f'$.. 

\begin{lem}\label{L:satisfied}
The test functions $f'_{\tau,\psi}$, $\psi\neq 1$ and $f'_\ell$ satisfy the following conditions:
\begin{enumerate}
	\item[(H1)] $f'$ is good for the cusps $0,\frac{1}{p},\ldots,\frac{p-1}{p}$
	\item[(H2)] There exists $\lambda\in\C_p^\times$ with trivial $p$-adic valuation for which $f'|\beta_i^{*-1}=\lambda f'$ for all $i=1,\ldots,p$\end{enumerate}

\end{lem}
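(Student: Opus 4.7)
The plan is to verify both conditions by direct calculation using the factorizations $f' = f_1' \times f_2'$ (Remark~\ref{R:factorization} for $f'_{\tau,\psi}$, and the formulas for $f_1', f_2'$ stated just before the lemma). For (H1) at the cusp $0$ with direction $\binom{0}{1}$, the integral $\int f'(w + x\binom{0}{1})\,dx$ factors as $f_1'(w_1) \int f_2'(y)\,dy$ by translation invariance of Haar measure. For $f'_\ell$ this vanishes because the $\ell$-local factor of $\int f_2'$ is $1 - \ell\cdot\ell^{-1} = 0$; for $f'_{\tau,\psi}$ it vanishes because $\int f_2' = N^{-1}\sum_{j=1}^M \psi(j) = 0$ since $\psi$ is nontrivial.

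For (H1) at each cusp $j_0/p$ with $1 \leq j_0 \leq p-1$ and direction $\binom{j_0}{p}$, the key observation is that $p$ is a unit in $\Q^{(p)}$, which lets me transfer the vanishing to the same character sum. After reducing to $w_1, w_2 \in \hat\Z^{(p)}$ (else the integrand is identically $0$) and noting that $f_2'(w_2 + px) \neq 0$ forces $L \mid (w_2 + px)$, I restrict $x$ to the coset $-p^{-1}w_2 + L\hat\Z^{(p)}$ and parametrize it by $z \in \hat\Z^{(p)}$ via $x = -p^{-1}w_2 + Lz$. Direct substitution gives $f_2'(w_2 + px) = \psi(pz)$ (up to the support condition $\gcd(z, M) = 1$), while $w_1 + j_0 x = (w_1 - j_0 p^{-1}w_2) + j_0 L z \equiv w_1 - j_0 p^{-1}w_2 \pmod{L\hat\Z^{(p)}}$, so $f_1'(w_1 + j_0 x)$ is constant in $z$. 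The integral collapses to a constant multiple of $\int_{\hat\Z^{(p)}} \psi(z)\,dz = 0$. For $f'_\ell$, the analogous local computation at $\ell$ yields $[\Z_\ell](w') - \ell \cdot [\Z_\ell](w')/\ell = 0$ regardless of whether $\ell$ divides $j_0$.

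For (H2), I compute $\beta_i^{*-1} = \bigl(\begin{smallmatrix} 1 & i/p \\ 0 & 1/p \end{smallmatrix}\bigr)$, so $(f'|\beta_i^{*-1})(x,y) = f'(x + iy/p,\, y/p)$. For $f'_\ell$, a coordinate-wise check at each prime $q \neq p$ shows that the conditions defining $\Z_q^2$ and $\Z_\ell \times \ell\Z_\ell$ are preserved (using $p \in \Z_q^\times$ and $i \in \Z$), giving $\lambda = 1$. For $f'_{\tau,\psi}$, the factorization gives $f_2'(y/p) = \psi(p)^{-1} f_2'(y)$ by multiplicativity of $\psi$, and $f_1'(x + iy/p) = f_1'(x)$ on the support of $f_2'(y)$, where $L \mid y$ forces $iy/p \in L\hat\Z^{(p)}$; hence $\lambda = \psi(p)^{-1}$, a root of unity and thus a $p$-adic unit. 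The main obstacle will be the bookkeeping for (H1) at the cusps $j_0/p$: simultaneously tracking the $L$-periodicity of $f_1'$, the $N$-periodicity of $f_2'$, and the shift by $-p^{-1}w_2$ under the rescaling by $L$, but once the parametrization by $z$ is in place the vanishing is immediate from $\sum_j \psi(j) = 0$.
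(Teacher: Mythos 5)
Your proposal is correct and follows essentially the same route as the paper: verify (H1) through the factorization $f'=f_1'\times f_2'$ and the vanishing of the character/Haar-measure sums ($\sum_j\psi(j)=0$, resp.\ the $\ell$-factor $1-\ell\cdot\ell^{-1}=0$), and verify (H2) by directly computing the $\beta_i^{*-1}$-action on each factor. The only differences are cosmetic: you compute adelically and treat the cusps $a/p$ for $f'_\ell$ by a direct local calculation where the paper invokes their $\Gamma_0(\ell)$-equivalence with the good cusp $0$, and your scalar $\lambda=\psi(p)^{-1}$ (for the $\beta_i^{*-1}$-action, which is what (H2) asks for) differs from the constant the paper quotes for the $\beta_i$-action, but both are roots of unity and only the triviality of the valuation is needed.
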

\begin{proof}
To see that $f'_\ell$ satisfies H1 we use the factorization $f'_\ell=f'_1\times f'_2$, where
\begin{equation*}
	f_1'=\bigotimes_{q\neq p} [\Z_q] \text{ and } f_2' =\left(\bigotimes_{q\neq p,\ell}[\Z_q]\right)\otimes([\Z_\ell]-\ell[\ell \Z_\ell]).
\end{equation*}
The test function $f'_2$ has Haar measure $h_\ell([\Z_\ell])-\ell h_\ell([\ell\Z_\ell])=1-\ell\frac{1}{\ell}=0$. Restricting $f'$ to lines parallel to ${0 \choose 1}$, we either get an identically zero test function, or the test function $f'_2$. In either case, the projection has Haar measure $0$, so $f'_\ell$ satisfies the vanishing hypothesis for the cusp $0$ and, since they are $\Gamma_0(\ell)$-equivalent to $0$, the cusps $\frac{1}{p},\ldots,\frac{p-1}{p}$.

To show the that $f'_{\tau,\psi}$ satisfies H1, let us fix $a\in\{1,\ldots,p-1\}$, $v={a \choose p}$,, $w= {b \choose d}$ and write $f=f'_{\tau,\psi}\otimes[\Z_p^2]$. Remark \ref{R:factorization} expresses $f$ as the product $\left(\sum_{i=1}^M\tau^{-1}(i)[i+L\Z]\right)\times \left(\sum_{j=1}^M \psi(j)[jL + N\Z]\right)$, and thus the $1$-dimensional test function $g(x)=f(v+wx)=f(ax+b,px+d)$ is equal to
\begin{align*}
	g(x)= \left(\sum_{i=1}^M\tau^{-1}(i)[i+L\Z](b+ax)\right)\times \left(\sum_{j=1}^M \psi(j)[jL + N\Z](px+d)\right)
\end{align*}
If $b\not\in\Z$, this is identically zero. Otherwise, $g(x)$ is periodic with respect to the lattice $apN\Z$, and since $(a,p)=1$, $g(x)$ is supported on $\Z$. Now $g(x)\neq 0$ implies there exist integers $i,j$ such that (i) $ax+b-i\equiv 0\pmod L$ and (ii) $px+d-jL\equiv 0\pmod{LM}$. Reducing (ii) modulo $L$, we see $px+d\equiv 0\pmod L$, which implies $i\equiv b-ap^{-1}d\pmod{L}$. As $x$ runs over the integers modulo $apLM$, every residue class $j$ modulo $M$ appears in the support of $g$. Therefore, the Haar measure of $g(x)$ is equal to
\begin{equation}
\frac{1}{apN}\sum_{x\in \Q/apN\Z} g(x)=\frac{1}{apN}\sum_{j\pmod{M}} \psi(j)=0
\end{equation}
since $\psi\neq1$. Therefore, $f'_{\tau,\psi}$ is good for the cusps $\frac{1}{p},\ldots,\frac{p-1}{p}$. The factorization of $f'_{\tau,\psi}$ shows that it is good for the cusp $0$, so $f'_{\tau,\psi}$ satisfies H1.

Finally, H2 follows from the straightforward computations $f'_{\tau,\psi}|\beta_i=\tau(p)f'_{\tau,\psi}$ (noting $\tau(p)$ is a unit, since the conductor of $\tau$ is not divisible by $p$) and $f'_\ell|\beta_i=f'_\ell$. 

\end{proof}

\begin{prop}\label{P:almost_eigen}
Let $k$ be an even integer, and suppose $f'$ satisfies H1 and H2. Then $R\Phi_{f'}^{-k}|U_p\{\infty,0\}=p^{k+1}\lambda R\Phi_{f'}^{-k}\{\infty,0\}$.
\end{prop}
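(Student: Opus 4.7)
The plan is to combine the $U_p$-eigensymbol property of $\Phi_{f'}^{-k}$ on $\Omega^{bd}_{-k}(\cZ)$---established in the preceding lemma, whose hypothesis is precisely H2---with Lemma~\ref{L:almost_equivariance}, which quantifies the failure of the residue map $R$ to commute with the weight-$k$ action of a $\beta_i\notin\Gamma_p$. The role of H1 is to guarantee that the pole-control hypothesis of Lemma~\ref{L:almost_equivariance} is satisfied for each coset representative.

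Unpacking $U_p$ with the double coset representatives $\beta_i=\begin{pmatrix}p&i\\0&1\end{pmatrix}$ fixed in the preceding subsection,
\[
(R\Phi_{f'}^{-k}|U_p)\{\infty,0\}=\sum_{i=1}^{p}\det(\beta_i)\,\mu_{\Phi_{f'}^{-k}(\beta_i\{\infty,0\})}|_k\beta_i.
\]
Since $\beta_i\{\infty,0\}=\{\infty,-i\}$ and $-i$ is $\Gamma$-equivalent to $0$ via the integer translation $\bigl(\begin{smallmatrix}1&-i\\0&1\end{smallmatrix}\bigr)\in\Gamma$, hypothesis H1 together with the $\Gamma$-invariance of the goodness condition forces $f'$ to be good for $-i$. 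Theorem~\ref{T:pole_description} then collapses the denominator of the pseudo-distribution $\Phi_{f'}\{\infty,-i\}$ to the single factor $X$, so after applying $\pi_{-k}$ the differential form $\Phi_{f'}^{-k}(\beta_i\{\infty,0\})$ has its unique pole at $z=\infty$. Because the annulus $B[\Z_p,1]|\beta_i^{-1}-B[\Z_p,1]$ is a bounded subset of $\C_p$, this pole lies outside it, and Lemma~\ref{L:almost_equivariance} legitimately converts each summand into $p^k\,\mu_{\Phi_{f'}^{-k}(\beta_i\{\infty,0\})|_{-k}\beta_i}$.

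Pulling $p\cdot p^k=p^{k+1}$ out in front and using the $\Q_p$-linearity of $R$ gives
\[
(R\Phi_{f'}^{-k}|U_p)\{\infty,0\}=p^{k+1}\,R\!\left(\sum_{i=1}^{p}\Phi_{f'}^{-k}(\beta_i\{\infty,0\})|_{-k}\beta_i\right).
\]
The preceding $U_p$-eigensymbol lemma applied under H2 supplies $\sum_{i=1}^p\det(\beta_i)\Phi_{f'}^{-k}(\beta_i\{\infty,0\})|_{-k}\beta_i=p\lambda\,\Phi_{f'}^{-k}\{\infty,0\}$; dividing through by $\det(\beta_i)=p$ identifies the sum inside $R$ with $\lambda\,\Phi_{f'}^{-k}\{\infty,0\}$, and the stated identity follows.

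The crux is the pole verification, since without H1 the form $\Phi_{f'}^{-k}\{\infty,-i\}$ generically carries a pole at $z=i\in\Z_p\subset B[\Z_p,1]$ and Lemma~\ref{L:almost_equivariance} would fail. Everything else is bookkeeping of conventions: the total exponent factors as $\det(\beta_i)^{k+1}=p\cdot p^k$, with the first $p$ coming from the paper's Hecke convention and the $p^k$ from the weight-mismatch between the weight-$k$ action on $\mathbf{D}_k$ and the weight-$(-k)$ action on $\Omega^{bd}_{-k}(\cZ)$ exposed by Lemma~\ref{L:almost_equivariance}.
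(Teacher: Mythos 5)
Your overall strategy matches the paper's (decompose $U_p$, use goodness plus Theorem \ref{T:pole_description} to verify the pole hypothesis of Lemma \ref{L:almost_equivariance}, pick up $\det^k=p^k$ there, and finish with the $U_p$-eigensymbol lemma under H2), but the execution of what you yourself call the crux is wrong. On the $\mathbf{D}_k$-side the operator $U_p$ must be computed with the $\Sigma_0(p)$ representatives of the form $\begin{pmatrix}1&a\\0&p\end{pmatrix}$: the matrices $\begin{pmatrix}p&i\\0&1\end{pmatrix}$ are adapted to the adjugate-twisted action on the two-variable distributions, they do not lie in $\Sigma_0(p)$ and do not act on $\cA^\dagger(\Z_p)$ or $\cD_k$ (the map $z\mapsto (z+i)/p$ leaves $\Z_p$), so your expression $\mu_{\Phi_{f'}^{-k}(\beta_i\{\infty,0\})}|_k\beta_i$ is not the definition of $U_p$ on $\Symb_{\Gamma}(\mathbf{D}_k)$, and the extra $\det(\beta_i)$ you insert there is not part of that definition either. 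Consequently the divisors appearing in the sum are $\{\infty,\tfrac{a}{p}\}$, not $\{\infty,-i\}$ with integer cusps, which is exactly why H1 lists the cusps $0,\tfrac{1}{p},\dots,\tfrac{p-1}{p}$.

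This matters for the pole verification. With the correct divisors, the denominator of $\Phi_{f'}\{\infty,\tfrac{a}{p}\}$ is $X(aX+pY)$, and after $\pi_{-k}$ the dangerous zero of $az^{-1}+p$ sits at $z=-a/p$ with $|z|_p=p$, i.e.\ squarely inside the annulus $B[\Z_p,1]|\gamma^{-1}-B[\Z_p,1]=\{1<|z|\le p\}$ where Lemma \ref{L:almost_equivariance} requires regularity; it is precisely goodness at the cusps $\tfrac{a}{p}$ (H1, via Theorem \ref{T:pole_description}) that removes this factor. In your version the only candidate pole is at an integer point $z=i\in B[\Z_p,1]$, where the lemma's hypothesis is vacuous anyway (and your justification ``the annulus is bounded, so the pole at $\infty$ lies outside it'' is the only part of the hypothesis that is automatic); note that under your reading H1 would play no role at all in this step, which is a sign the unwinding is off. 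The invocation of $\Gamma$-equivalence of integer cusps to $0$ is likewise moot. The final assembly (Lemma under H2 giving $p\lambda$, times $p^k$ from Lemma \ref{L:almost_equivariance}, totalling $p^{k+1}\lambda$) does land on the paper's answer, but only after the coset representatives, divisors, and pole locations are corrected as above.
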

\begin{proof}

Fix $g\in\cA^\dagger(\Z_p)$. The distribution $R\Phi_{f'}^{-k}|U_p\{\infty,0\}$ evaluated at $g$ is equal to
\begin{align*}
	R\Phi^{-k}_{f'}|U_p\{\infty,0\}(g)=\sum_{a=0}^{p-1} \langle \Phi_{f}'^{-k}\{\infty,\frac{a}{p}\}, \beta_a\cdot_k g\rangle_k
\end{align*}
The fact that $f'$ is good at the cusps $\frac{a}{p}$ means $\Phi_{f'}^{-k}\{\infty,\frac{a}{p}\}$ has no poles in the residue disk of $\infty$, except a possible simple pole at $\infty$. Lemma \ref{L:almost_equivariance} shows $\langle \Phi_{f}'^{-k}\{\infty,\frac{a}{p}\}, \beta_a\cdot_k g\rangle_k=p^k \langle \Phi_{f'}^{-k}\{\infty,\frac{a}{p}\}|_{-k}\beta_a, g\rangle_k$, so our sum is
\begin{align*}
	=\sum_{a=0}^{p-1} p^k \langle \Phi_{f'}^{-k}\{\infty,\frac{a}{p}\}|_{-k}\beta_a, g\rangle_k\\
	=\langle p^k\Phi_{f'}^{-k}|U_p\{\infty,0\},g\rangle_k\\
	= \langle p^{k} p\lambda \Phi_{f'}^{-k}\{\infty,0\},g\rangle_k\\
	=p^{k+1}\lambda R\Phi_{f'}^{-k}(g).
\end{align*}
We conclude that $R\Phi_{f'}^{-k}|U_p\{\infty,0\}=p^{k+1}\lambda R\Phi_{f'}^{-k}\{\infty,0\}$.
\end{proof}

\begin{prop}\label{P:same_L}
If $R\Phi_{f'}^{-k}|U_p\{\infty,0\}=p^{k+1}\lambda R\Phi_{f'}^{-k}\{\infty,0\}$ with $\lambda$ a $p$-adic unit, then $R\Phi_{f'}^{-k}$ differs from the critical slope modular symbol $\Phi^{crit}$ by a modular symbol with trivial $p$-adic $L$-function.
\end{prop}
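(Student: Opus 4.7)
The plan is to leverage Bella\"iche's one-dimensionality theorem for the critical slope Eisenstein eigenspaces together with the $U_p$-eigenvector condition at $\{\infty,0\}$ supplied by the hypothesis, pushing $R\Phi_{f'}^{-k}$ onto the true Hecke eigenspace, up to a piece making no contribution to the $p$-adic $L$-function.

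First I would split $R\Phi_{f'}^{-k}=\Phi^{+}+\Phi^{-}$ under the involution $\iota$. By the Hecke eigenvalue corollary of the previous subsection, each $\Phi^{\pm}\in\Symb_{\Gamma}^{\pm}(\cD_k)$ is a tame Hecke eigenvector with the system of eigenvalues attached to $f_\beta$, and so lies in the generalized $f_\beta$-eigenspace $\Symb_{\Gamma}^{\pm}(\cD_k)(f_\beta)$. Splitting the hypothesis by sign yields $((U_p-\alpha_p)\Phi^{\pm})\{\infty,0\}=0$, where $\alpha_p:=p^{k+1}\lambda$ is the critical slope $U_p$-eigenvalue.

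Next I would invoke Bella\"iche's theorem, which asserts that the true eigenspace $\Symb_{\Gamma}^{\pm}(\cD_k)[f_\beta]$ is one-dimensional, generated by $\Phi^{crit,\pm}$, while the generalized eigenspace has dimension at most two (so that $(U_p-\alpha_p)^2=0$ on it). Hence $(U_p-\alpha_p)$ maps the generalized eigenspace into its one-dimensional kernel, and we may write $(U_p-\alpha_p)\Phi^{\pm}=c^{\pm}\Phi^{crit,\pm}$ for some $c^{\pm}\in\C_p$. Applying the hypothesis gives $c^{\pm}\Phi^{crit,\pm}\{\infty,0\}=0$. Assuming $\Phi^{crit,\pm}\{\infty,0\}\neq 0$ (see below), this forces $c^{\pm}=0$, so each $\Phi^{\pm}$ is an honest $U_p$-eigenvector, hence a scalar multiple $b^{\pm}\Phi^{crit,\pm}$ of the generator. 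Choosing $\Phi^{crit}:=b^{+}\Phi^{crit,+}+b^{-}\Phi^{crit,-}$, the difference $R\Phi_{f'}^{-k}-\Phi^{crit}$ vanishes identically and thus has trivial $p$-adic $L$-function.

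The main obstacle is the nonvanishing of $\Phi^{crit,\pm}\{\infty,0\}$, which amounts to saying that the critical slope Eisenstein $p$-adic $L$-function is not identically zero in each sign. This is the very quantity the paper computes, so it might appear circular to invoke here; however, it is confirmed a posteriori by the explicit product formula of Bella\"iche-Dasgupta (or by Theorem C of this paper), so the logical flow remains sound. Were $\Phi^{crit,\pm}\{\infty,0\}$ in fact zero, the conclusion would still hold in the degenerate sense: $\Phi^{crit}$ itself would have trivial $L$-function, and one would simply take $c^\pm = 0$ and compare against the zero eigensymbol.
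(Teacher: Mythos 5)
There is a genuine gap at the very first structural step: you place $\Phi^{\pm}$ in the generalized $f_\beta$-eigenspace and then let $U_p-\alpha_p$ act nilpotently on it, but nothing in the hypothesis puts $R\Phi_{f'}^{-k}$ in any $U_p$-generalized eigenspace. The paper is explicit that the residue map $R$ is \emph{not} $\Sigma_0(p)$-equivariant, so the $U_p$-eigenproperty of $\Phi_{f'}^{-k}$ does not transfer to $R\Phi_{f'}^{-k}$ as a modular symbol; the hypothesis of the proposition controls $U_p$ only on the single value $R\Phi_{f'}^{-k}\{\infty,0\}$ (that is exactly what Proposition \ref{P:almost_eigen} delivers, via Lemma \ref{L:almost_equivariance} and the good-cusp condition). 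What does transfer is the \emph{tame} eigensystem, and that eigensystem is shared by both $p$-refinements of the Eisenstein series: the tame eigenspace contains the ordinary symbol as well as the critical one (and a priori higher-slope, non-classical pieces). So $R\Phi_{f'}^{-k}$ can perfectly well have a nonzero ordinary component, and your deduction $(U_p-\alpha_p)\Phi^{\pm}=c^{\pm}\Phi^{crit,\pm}$, which needs $(U_p-\alpha_p)^2=0$ on the space containing $\Phi^{\pm}$, collapses. (The auxiliary claim that the generalized eigenspace has dimension at most two is also an unsupported extra input, but it is moot given the membership problem.) Your fallback in the degenerate case is likewise not quite right: $c^{\pm}$ is determined, not chosen, and if $\Phi^{crit,\pm}\{\infty,0\}=0$ you would still owe an argument that $R\Phi_{f'}^{-k}\{\infty,0\}$ restricted to $\Z_p^\times$ is trivial.

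The paper's proof is designed around precisely this difficulty: it never claims $R\Phi_{f'}^{-k}$ is (even generalized) $U_p$-eigen. Instead it slope-decomposes $\Symb_\Gamma(\mathbf{D}_k)=\Symb_\Gamma(\mathbf{D}_k)^{<k+1}\oplus\Symb_\Gamma(\mathbf{D}_k)^{k+1}\oplus\Symb_\Gamma(\mathbf{D}_k)^{>k+1}$, writes $R\Phi_{f'}^{-k}=\phi^{<k+1}+\phi^{k+1}+\phi^{>k+1}$, and uses the hypothesis only at the divisor $\{\infty,0\}$: iterating $U_p$ (for the ordinary projector) gives $\phi^{<k+1}\{\infty,0\}=\lim_n p^{n(k+1)}\lambda^n R\Phi_{f'}^{-k}\{\infty,0\}=0$, and iterating $U_p/(\lambda p^{k+1})$ kills $\phi^{>k+1}\{\infty,0\}$, so the difference with the critical-slope part vanishes on $\{\infty,0\}$ and hence has trivial $p$-adic $L$-function. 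If you want to salvage your route, you would first have to project $R\Phi_{f'}^{-k}$ onto the critical-slope subspace and show the complementary projections vanish at $\{\infty,0\}$ --- which is exactly the paper's argument --- and only then invoke Bella\"iche's one-dimensionality (as the paper does in the theorem that follows), thereby also avoiding any appeal to nonvanishing of $\Phi^{crit,\pm}\{\infty,0\}$.
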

\begin{proof}
The Banach space $\Symb_\Gamma(\mathbf{D}_k)$ decomposes as a direct sum 
\begin{equation*}
\Symb_{\Gamma}(\mathbf{D}_k)^{< k+1}\oplus\Symb_\Gamma(\mathbf{D}_k)^{k+1}\oplus\Symb_{\Gamma}(\mathbf{D}_k)^{>k+1 },
\end{equation*}
so we may decompose $R\Phi_{f'}^{-k}$ as the sum of modular symbols $\phi^{< k+1}$, $\phi^k$ and $\phi^{> k+1}$. We claim $L_p(\phi^{<k+1},s)$ and $L_p(\phi^{>k+1},s)$ are both identically zero. 

By Stevens' control theorem, $\phi^{<k+1}$ is classical, and since it is Eisenstein and non-critical slope, it must be ordinary. 
Applying Hida's ordinary projection $e$ to $R\Phi_{f'}^{-k}$, we get $(e\Phi_{f'}^{-k})\{\infty,0\}=\lim_{n\rightarrow \infty} p^{n(k+1)} \lambda^nR\Phi_{f'}^{-k}\{\infty,0\}=0$, so $\phi^{<k+1}\{\infty,0\}=0$. Similarly, we can conclude $\phi^{>k+1}\{\infty,0\}=0$ by iterating the operator $U_p/\lambda p^{k+1}$: $R\Phi_{f'}^{-k}\{\infty,0\}=\phi^{k+1}\{\infty,0\} + (\lim_{n\rightarrow\infty} (U_p/\lambda p^{k+1})^n \phi^{>k+1})\{\infty,0\}=\phi^{k+1}\{\infty,0\}+0$.

We conclude that the difference between $R\Phi_{f'}^{-k}$ and $\phi^{k+1}$ vanishes on $\{\infty,0\}$, and therefore has trivial $p$-adic $L$-function. 
\end{proof}

Now suppose $\tau,\psi$ are two primitive Dirichlet characters of prime-to-$p$ conductor $L, M$, respectively and that $\psi\neq 1$. Fixing an embedding $\overline{\Q}\hookrightarrow\C_p$, we view $\tau,\psi$ as characters valued in $\C_p^\times$. 

\begin{thm}
If $\psi\neq 1,\tau$ are primitive Dirichlet characters of prime-to-$p$ conductor, then $L_p(\Phi_{\tau,\psi}^{-k},s)=L_p(E_{k+2,\psi,\tau}^{crit},s)$ for all even $k>0$. If $k=0$ and $\psi,\tau$ have relatively prime conductor, then the same result holds.
\end{thm}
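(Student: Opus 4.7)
The strategy is to assemble three ingredients already in hand---Lemma \ref{L:satisfied}, Proposition \ref{P:almost_eigen}, and Proposition \ref{P:same_L}---and then match Hecke eigenvalues to identify the resulting object with the critical slope Eisenstein eigensymbol attached to $E^{crit}_{k+2,\psi,\tau}$. Lemma \ref{L:satisfied} shows that when $\psi \neq 1$ the test function $f'_{\tau,\psi}$ satisfies H1 and H2, with scalar $\lambda = \tau(p)$; because the conductor of $\tau$ is prime to $p$, this $\lambda$ is a $p$-adic unit. Proposition \ref{P:almost_eigen} then yields
\begin{equation*}
R\Phi^{-k}_{\tau,\psi}|U_p\{\infty,0\} = p^{k+1}\tau(p)\,R\Phi^{-k}_{\tau,\psi}\{\infty,0\},
\end{equation*}
and Proposition \ref{P:same_L} produces a decomposition $R\Phi^{-k}_{\tau,\psi} = \Phi^{crit} + \phi^{\mathrm{triv}}$ in which $\Phi^{crit}$ is the slope-$(k+1)$ component of $R\Phi^{-k}_{\tau,\psi}$ and $\phi^{\mathrm{triv}}$ has trivial $p$-adic $L$-function. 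Consequently $L_p(R\Phi^{-k}_{\tau,\psi},s) = L_p(\Phi^{crit},s)$.

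What remains is to identify $\Phi^{crit}$ with (a non-zero scalar multiple of) the canonical critical slope Eisenstein eigensymbol for $E^{crit}_{k+2,\psi,\tau}$. For this I invoke the Hecke eigenvalue corollary at the end of \S 4.1: $R\Phi^{-k}_{\tau,\psi}$ carries diamond eigenvalues $\tau\psi$, tame Hecke eigenvalues $\psi(q) + q^{k+1}\tau(q)$ for $q \nmid Np$, and $\iota$-eigenvalue $(-1)^{k+2}\psi(-1)$, and these are inherited by $\Phi^{crit}$ under the slope projection. Together with the $U_p$-eigenvalue $p^{k+1}\tau(p)$, these are exactly the eigenvalues of the critical slope refinement of $E_{k+2,\psi,\tau}$---the slope-$(k+1)$ root of its Hecke polynomial at $p$ is precisely $p^{k+1}\tau(p)$. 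Bella\"iche's theorem \cite{Bel12} asserts that the $\sgn$-part of the critical Eisenstein eigenspace in $\Symb_\Gamma(\cD_k)$ is one-dimensional, so $\Phi^{crit}$ coincides with the canonical generator up to a non-zero scalar, and therefore the two $p$-adic $L$-functions agree.

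The case $k=0$ requires only a modification to the verification of H1; H2 is insensitive to $k$ and the computation $f'_{\tau,\psi}|\beta_i^{*-1} = \tau(p) f'_{\tau,\psi}$ gives the same $p$-adic unit $\lambda = \tau(p)$. The proof of H1 in Lemma \ref{L:satisfied} used $\psi \neq 1$ to obtain the vanishing of a character sum $\sum_{j \pmod M}\psi(j)$. When $\psi = 1$ and $\gcd(L,M)=1$, one re-factors $f'_{\tau,\psi}\otimes[\Z_p^2]$ and runs the same transverse computation along the $\tau$-direction: coprimality of $L$ and $M$ guarantees that every residue class modulo $L$ enters the support of the relevant $1$-dimensional slice, so the Haar measure vanishes via $\sum_{i \pmod L}\tau^{-1}(i)=0$. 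With H1 and H2 reestablished, the argument of the preceding two paragraphs applies verbatim.

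The main obstacle is the identification step, which rests entirely on Bella\"iche's one-dimensionality theorem for the critical Eisenstein eigenspace. Without that input one could only conclude that $\Phi^{crit}$ is some vector in a possibly larger generalized critical eigenspace with the correct tame eigenvalues, and the clean equality of $p$-adic $L$-functions would not follow. The computations at $p$ and away from $p$ are routine by comparison, because our symbols come from the single global object $\Phi$, reducing all Hecke manipulations to combinatorics of test functions.
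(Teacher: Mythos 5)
For $k>0$ (indeed for any $k\geq 0$ with $\psi\neq 1$) your argument is the paper's own: Lemma \ref{L:satisfied} supplies H1--H2 with $\lambda=\tau(p)$ a $p$-adic unit, Proposition \ref{P:almost_eigen} gives the $U_p$-relation on $\{\infty,0\}$, Proposition \ref{P:same_L} discards the non-critical-slope contributions to the $p$-adic $L$-function, and Bella\"iche's one-dimensionality theorem identifies the remaining slope-$(k+1)$ piece with the eigensymbol attached to $E^{crit}_{k+2,\psi,\tau}$ up to a non-zero scalar. The explicit matching of tame, diamond, sign and $U_p$ eigenvalues that you spell out is left implicit in the paper's one-paragraph proof, but it is the right way to complete the identification, and the eigenvalues you quote are correct.

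The genuine problem is your $k=0$ paragraph. Your proposed re-verification of H1 when $\psi=1$ fails. If $\psi=1$ then $M=1$, $N=L$, and by Remark \ref{R:factorization} the second factor of $f'_{\tau,\psi}\otimes[\Z_p^2]$ degenerates to $[L\Z]$, which has non-zero Haar measure. Along a line in the direction $\binom{a}{p}$, say $x\mapsto f(ax+b,\,px+d)$, the support condition $px+d\in L\Z$ (with $p\nmid L$) forces the parameter into a single congruence class, and on that locus $ax+b$ moves only by multiples of $aL$, hence stays in one fixed residue class modulo $L$; so $\tau^{-1}$ is \emph{constant} on the slice and the sum $\sum_{i\bmod L}\tau^{-1}(i)$ never appears. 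Concretely, taking $w=\binom{1}{0}$ the slice is $[L\Z]$ with Haar measure $1/L\neq 0$, so the vanishing hypothesis genuinely fails at the cusps $\frac{a}{p}$ (and at $0$, where the second factor's Haar measure is $\frac{1}{N}\sum_j\psi(j)=\frac1L\neq0$). Coprimality of $L$ and $M$ cannot rescue this: it is vacuous when $M=1$. Note also that Lemma \ref{L:satisfied} and Propositions \ref{P:almost_eigen}, \ref{P:same_L} are uniform in $k\geq 0$ once $\psi\neq1$, so the $k=0$ case with $\psi\neq1$ needs no new verification of H1--H2 at all; the extra hypothesis $(M,L)=1$ at $k=0$ is not a substitute for $\psi\neq1$ in the vanishing hypothesis, but enters elsewhere (the weight-$2$ degeneration, where $\Phi^{0}_{f'}$ vanishes by Lemma \ref{L:is_zero} and one must work with the renormalized symbol $\frac1k\Phi^{-k}_{f'}$ as in Theorem \ref{T:padicLfunction}, together with the applicability of Bella\"iche's result in weight $2$). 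So the $k=0$ clause of the statement is not established by the argument you give.
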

\begin{proof}
Bellaiche's theorem implies $\Symb_{\Gamma}^{\psi(-1)}(\bD_{k})[E_{k+2,\psi,\tau}^{crit}]$ is $1$-dimensional. Denote by $\Phi_{Eis}$ any non-zero element of $\Symb_{\Gamma}^{\psi(-1)}(\bD_{k})[E_{k+2,\psi,\tau}^{crit}]$. By Lemma \ref{L:satisfied}, we can apply Proposition \ref{P:almost_eigen} and Proposition \ref{P:same_L} to $\Phi_{\tau,\psi}^{-k}$ to conclude that, up to non-zero scalar, $L_p(\Phi_{Eis},s)=L_p(\Phi_{\tau,\psi}^{-k},s)$.
\end{proof}

In the following section, we compute $L_p(\Phi_{\tau,\psi}^{-k},s)$, thus giving formulas for $L_p(E_{k+2,\psi,\tau}^{crit},s)$ and $L_p(E_{2,\ell}^{crit},s)$.

\subsection{Computing $p$-adic $L$-functions}

\begin{lem}\label{L:restriction}
Suppose $\phi\in\Symb_{\Gamma}(\cD(\Z_p))$ satisfies $\phi|U_p\{\infty,0\}=\alpha\phi\{\infty,0\}$. Then
\begin{equation*}
	\phi\{\infty,0\} |_{\Z_p^\times} (z^n) = \left(1-\frac{p^n}{\alpha}\right)\phi\{\infty,0\} (z^n).
\end{equation*}
\end{lem}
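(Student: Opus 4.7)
The plan is to exploit the $U_p$-eigenvalue relation via the standard $p$-stabilization trick. I take as coset representatives the matrices $\sigma_a = \begin{pmatrix} 1 & a \\ 0 & p \end{pmatrix}\in\Sigma_0(p)$, $a = 0, 1, \dots, p-1$, which send $\{\infty,0\}\mapsto \{\infty, a/p\}$ and whose (weight-independent, since the lower-left entry is zero) action on $\cA(\Z_p)$ is $\sigma_a\cdot f(z) = f(a+pz)$. Expanding $U_p$ over these cosets gives
\begin{equation*}
\phi|U_p\{\infty,0\} \;=\; \sum_{a=0}^{p-1} \phi\{\infty, a/p\}\,\big|\,\sigma_a.
\end{equation*}

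The key observation is that $z^n$ and $\mathbf{1}_{\Z_p^\times}z^n$ behave differently only through the $a=0$ coset. For $a\in\{1,\dots,p-1\}$ the element $a+pz$ is a $p$-adic unit for every $z\in\Z_p$, so $\sigma_a\cdot(\mathbf{1}_{\Z_p^\times}z^n)=(a+pz)^n = \sigma_a\cdot z^n$; whereas for $a=0$ we have $pz\in p\Z_p$, giving $\sigma_0\cdot(\mathbf{1}_{\Z_p^\times}z^n)=0$ while $\sigma_0\cdot z^n = p^n z^n$. Subtracting the two evaluations of $\phi|U_p\{\infty,0\}$ therefore cancels all the terms with $a\neq 0$, leaving
\begin{equation*}
\phi|U_p\{\infty,0\}(z^n)-\phi|U_p\{\infty,0\}(\mathbf{1}_{\Z_p^\times}z^n) \;=\; p^n\,\phi\{\infty,0\}(z^n).
\end{equation*}

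Substituting the eigenvalue hypothesis $\phi|U_p\{\infty,0\}=\alpha\,\phi\{\infty,0\}$ into the left side yields
\begin{equation*}
\alpha\,\phi\{\infty,0\}(z^n)-\alpha\,\phi\{\infty,0\}|_{\Z_p^\times}(z^n) \;=\; p^n\,\phi\{\infty,0\}(z^n),
\end{equation*}
and dividing by $\alpha$ gives the claim. The entire argument is a routine bookkeeping exercise; no serious obstacle is anticipated. The Euler-factor shape $(1-p^n/\alpha)$ appears precisely because the $a=0$ coset is the unique one that kills the restriction to $\Z_p^\times$, so its contribution supplies exactly the correction between $\phi\{\infty,0\}(z^n)$ and $\phi\{\infty,0\}|_{\Z_p^\times}(z^n)$.
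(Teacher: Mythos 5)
Your proof is correct and is exactly the standard $U_p$ computation that the paper itself does not write out but simply cites (the proof of Proposition 6.3 in \cite{PolSte11}): expand $U_p$ over the representatives $\begin{pmatrix}1 & a\\ 0 & p\end{pmatrix}$, note the automorphy factor is trivial for them, and observe that only the $a=0$ coset distinguishes $z^n$ from $\mathbf{1}_{\Z_p^\times}z^n$, producing the Euler factor $\left(1-\frac{p^n}{\alpha}\right)$. No gaps; this matches the intended argument.
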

\begin{proof}
This is a standard computation with $U_p$ and can be found, for example, in the proof of Proposition 6.3 in \cite{PolSte11}.
\end{proof}

We will also use the following trick.
\begin{lem}
Let $\mu\in\widetilde{\cD}(\Q_p)$. For all $s\in\Z_p\subset\cX_{\Q_p}$, $L_p(D_z\mu,s+1)=sL_p( \mu,s)$.
\end{lem}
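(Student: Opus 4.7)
The plan is to unfold the definition of $L_p$ on both sides and reduce the identity to the straightforward observation that $D_z$ is dual (in the sense of distributions) to the corresponding differential operator on locally analytic functions, so that pairing $D_z\mu$ against the character $z\mapsto z^{s+1}$ supported on $\Z_p^\times$ produces $s$ times the pairing of $\mu$ against $z\mapsto z^s$.

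First, I would write
\begin{equation*}
L_p(D_z\mu,s+1)=\int_{\Z_p^\times} z^{s+1}\,d(D_z\mu)(z)=(D_z\mu)\bigl(1_{\Z_p^\times}\cdot z^{s+1}\bigr),
\end{equation*}
and then invoke the defining adjoint property of $D_z$ on $\widetilde{\cD}(\Q_p)$ to move the operator onto the test function. Here the key point is that although $\mu$ lies in $\widetilde{\cD}(\Q_p)$ rather than $\cD(\Q_p)$, the restriction $1_{\Z_p^\times}\cdot z^{s+1}$ is a bounded locally analytic function of compact support on $\Z_p^\times$ (the unit $z$ is bounded away from zero), so the pole of $\mu$ at $0$ does not interact with the pairing. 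This removes the one place where one might worry about technical hypotheses, and reduces the identity to an elementary computation with monomials.

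Next, I would carry out that computation. The operator $D_z$ is arranged precisely so that $D_z(z^{s+1})=s\,z^s$ for all $s$ (this is its characterizing property on powers restricted to $\Z_p^\times$: the relevant identity is $D_z(z^n)=(n-1)z^{n-1}$, whence the shift of index by one in the statement). Hence
\begin{equation*}
(D_z\mu)\bigl(1_{\Z_p^\times}\cdot z^{s+1}\bigr)=\mu\bigl(1_{\Z_p^\times}\cdot D_z(z^{s+1})\bigr)=s\int_{\Z_p^\times}z^s\,d\mu(z)=s\,L_p(\mu,s).
\end{equation*}
To justify the first equality for $s\in\Z_p$ (not just integer $s$), I would observe that the character $z\mapsto z^{s+1}$ extends to a rigid analytic function on a neighborhood of $\Z_p^\times$ (by the lemma used earlier in \S3 to construct analytic families), so the continuity of the pairing on $\widetilde{\cD}(\Q_p)$ together with the case of integer $s$ (where everything is a classical polynomial/monomial computation) gives the identity in the $s$-variable by analytic continuation.

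The main obstacle is conceptual rather than technical: verifying that the adjoint formula $(D_z\mu)(g)=\mu(D_z g)$ extends across the localization of $\cD(\Q_p)$ at linear forms. Concretely, one must check that $D_z$ is compatible with multiplication by the elements of $S$ that define $\widetilde{\cD}$. Since $D_z$ is a first-order differential operator and $S$ consists of multiplication operators by polynomials, the Leibniz rule produces well-defined correction terms in $\widetilde{\cD}$, and one verifies compatibility by working on an $S$-torsor description of $\widetilde{\cD}$. Once this is in place, the entire argument collapses to the one-line monomial computation above.
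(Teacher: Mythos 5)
Your overall strategy---unfold $L_p$, move $D_z$ onto the test function, compute on monomials, and extend from integer weights by density---is the same as the paper's, but the one substantive step is asserted rather than proved, and it is false for the operator the paper actually means. In the paper $D_z$ is the dual action of differentiation: it is the action of the variable under the Fourier transform, cf.\ the definition $(\partial_x\cdot\mu)(f\otimes P)=\mu(f\otimes\partial_x P)$ and the factorization $\mu_{f'}=D_x\xi_{f'_1}\times D_y\xi_{f'_2}$ coming from $\mu_{f'}=XY\cdot\Phi_{f'}\{\infty,0\}$. Hence $(D_z\mu)\bigl(1_{\Z_p^\times}z^{n}\bigr)=\mu\bigl(1_{\Z_p^\times}\,n z^{n-1}\bigr)$, whereas the identity $D_z(z^{n})=(n-1)z^{n-1}$ that you call the ``characterizing property'' of $D_z$ is not the paper's operator and is never justified in your argument; it is exactly the statement you need, so the proof is circular at its only nontrivial point. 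With the correct $D_z$ and your normalization $L_p(\nu,s)=\int_{\Z_p^\times}z^{s}\,d\nu$, your displayed chain yields $L_p(D_z\mu,s+1)=(s+1)L_p(\mu,s)$, not $sL_p(\mu,s)$.

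The discrepancy is a normalization issue that the paper resolves differently: in this section it evaluates $L_p(\nu,n+1)=\int_{\Z_p^\times}z^{n}\,d\nu$ (this is also how the lemma is applied later, via $\int_{\Z_p^\times}y^{-m}\,d(D_y\xi_{f'_2})=L_p(D_y\xi_{f'_2},-m+1)=-mL_p(\xi_{f'_2},-m)$), and with that convention the honest monomial computation $\int_{\Z_p^\times}z^{n}\,d(D_z\mu)=n\int_{\Z_p^\times}z^{n-1}\,d\mu$ gives $L_p(D_z\mu,n+1)=nL_p(\mu,n)$ for non-negative integers $n$, after which density of the integers in weight space concludes, exactly as in your last step. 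Your auxiliary worries are misplaced rather than wrong: since the action of differential operators on distributions is defined by applying them to the test function, it extends to the localization $\widetilde{\cD}$ tautologically (no Leibniz corrections or ``$S$-torsor'' analysis is needed), and the pole at $0$ indeed never enters because the test function is supported on $\Z_p^\times$. To repair the proof, either adopt the section's normalization of $L_p$ or prove the monomial identity from the actual definition of $D_z$ and track the resulting shift; as written, the argument proves a different identity.
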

\begin{proof}
Let $n$ be a non-negative integer. Then $L_p( D_z \mu,n+1)=\int_{\Z_p^\times} z^{n} d( D_z\mu)=\int_{\Z_p^\times} nz^{n-1}d\mu=nL_p(\mu,n)$. The result follows from the fact that the non-negative integers are dense in $\cX_{\Q_p}$.
\end{proof}

Now we come to the Main Theorem of this section:

\begin{thm}\label{T:padicLfunction}
Let $k\geq$ be a positive even integer. Suppose the test function $f'$ is factorizable as $f'=f_1'\times f_2'$, and that $f'$ is good for the cusps $0,\frac{1}{p},\ldots,\frac{p-1}{p}$.  Then the $p$-adic $L$-function of the modular symbol $R(\frac{1}{k}\Phi_{f'}^{-k})\in\Symb_{\Gamma}(\cD_k(\Z_p))$ is, up to a non-zero constant,
\begin{equation*}
	L_p\left(R\Phi_{f'}^{-k},s\right)={\wt{s}  \choose k+1}L_p(\xi_{f'_1},s-k)L_p(\xi_{f'_2},-s). 
\end{equation*}
\end{thm}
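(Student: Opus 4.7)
The plan is to compute $L_p(R\Phi_{f'}^{-k}, s)$ directly from its definition.  By Proposition \ref{P:almost_eigen}, $R\Phi_{f'}^{-k}\{\infty, 0\}$ is a $U_p$-eigenvector on $\{\infty,0\}$ with eigenvalue $p^{k+1}\lambda$, where $\lambda$ is a $p$-adic unit by H2 of Lemma \ref{L:satisfied}, so Lemma \ref{L:restriction} reduces the computation to
\begin{equation*}
L_p(R\Phi_{f'}^{-k}, n) = (1 - p^{n-k-1}/\lambda) \cdot R\Phi_{f'}^{-k}\{\infty,0\}(z^n)
\end{equation*}
for each integer $n \geq 0$.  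By the residue pairing, this value equals $\Res(z^{n-k}\omega)$ where $\omega := \Phi_{f'}^{-k}\{\infty, 0\}$.  The analytic family construction gives the explicit formula
\begin{equation*}
\omega = \left(\int_{\Z_p \times \Z_p^\times}(xz^{-1}+y)^{-k}\, d\mu_{f'}\{\infty, 0\}\right)\frac{dz^{-1}}{z^{-1}},
\end{equation*}
where $\mu_{f'}\{\infty, 0\}$ is the honest analytic distribution satisfying $\Phi_{f'}\{\infty,0\} = \mu_{f'}\{\infty,0\}/(XY)$.  Expanding $(xz^{-1}+y)^{-k}$ as a geometric series (valid since $|xz^{-1}|<|y|=1$ on the integration domain) and reading off the residue yields, for $n \geq k$,
\begin{equation*}
R\Phi_{f'}^{-k}\{\infty, 0\}(z^n) = -\binom{-k}{n-k} \int_{\Z_p \times \Z_p^\times} x^{n-k} y^{-n}\, d\mu_{f'}\{\infty, 0\}.
\end{equation*}

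Because $f' = f_1' \times f_2'$ is factorizable, the 1D factorization $\Psi\{\infty,0\} = \xi_1 \times \xi_2$ from Section 2 implies that $\mu_{f'}\{\infty, 0\}$ on $\Z_p \times \Z_p^\times$ decomposes as the tensor product $(X\xi_{f_1'}) \otimes (Y\xi_{f_2'})$ of the 1D Shintani distributions cleared of their poles, so the 2D integral factors as
\begin{equation*}
\int_{\Z_p\times\Z_p^\times} x^{n-k} y^{-n}\, d\mu_{f'}\{\infty,0\} = \left(\int_{\Z_p} x^{n-k}\, d(X\xi_{f_1'})\right)\left(\int_{\Z_p^\times} y^{-n}\, d(Y\xi_{f_2'})\right).
\end{equation*}
The second factor is exactly $L_p(\xi_{f_2'}, -n)$ in the theorem's convention.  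For the first factor, the shift lemma $L_p(D_z\mu, s+1) = s\, L_p(\mu, s)$ applied to $X\xi_{f_1'}$, together with the 1D analog of Lemma \ref{L:restriction} that governs passing from $\int_{\Z_p}$ to $\int_{\Z_p^\times}$, identifies it with $L_p(\xi_{f_1'}, n-k)$ up to an Euler factor at $p$ that cancels the $(1 - p^{n-k-1}/\lambda)$ coming from the first step.  Together with the binomial identity
\begin{equation*}
\binom{-k}{n-k} = (-1)^{n-k}\binom{n-1}{k-1} = \frac{(-1)^{n-k}(k+1)k}{n(n-k)}\binom{n}{k+1},
\end{equation*}
the spurious factors of $n$ and $n-k$ in the denominator are exactly those supplied by the two shift identities, producing
\begin{equation*}
L_p\left(\tfrac{1}{k}R\Phi_{f'}^{-k}, n\right) = C\cdot \binom{n}{k+1}\, L_p(\xi_{f_1'}, n-k)\, L_p(\xi_{f_2'}, -n)
\end{equation*}
for a non-zero constant $C$ depending only on $k$ and $\lambda$.

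Finally, since the integers $n \geq k$ are Zariski-dense in each component of $\cX_{\Q_p}$, rigidity of analytic functions promotes the pointwise identity at non-negative integers to the full analytic identity on $\cX$, with $\binom{\wt(s)}{k+1}$ replacing $\binom{n}{k+1}$.  The main obstacle is the bookkeeping in the middle paragraph: confirming that the Euler factor $(1-p^{n-k-1}/\lambda)$ produced by Lemma \ref{L:restriction} is precisely cancelled by the Euler factor built into the Kubota-Leopoldt $L_p(\xi_{f_i'},\cdot)$, and that the two applications of the shift identity supply exactly the numerical factors needed to convert $\binom{-k}{n-k}/k$ into $\binom{n}{k+1}$ without introducing $s$-dependent residue.
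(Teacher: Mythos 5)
Your proposal is correct and follows essentially the same route as the paper's own proof: read off the moments of $R\Phi_{f'}^{-k}\{\infty,0\}$ from the explicit Laurent expansion of the weight $-k$ specialization, factor $\mu_{f'}$ as $D_x\xi_{f_1'}\times D_y\xi_{f_2'}$, remove the derivatives with the shift lemma, handle the restriction to $\Z_p^\times$ via Proposition \ref{P:almost_eigen} and Lemma \ref{L:restriction}, and conclude by density of the integers in weight space. The differences are cosmetic: you apply the restriction lemma at the outset rather than at the end, and you explicitly flag the Euler-factor and binomial bookkeeping that the paper dispatches with ``a calculation shows.''
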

\begin{proof}

Denote by $\mu_{f'}\in\cD(\Z_p^2)$ the two-variable distribution satisfying $\Phi_{f'}\{\infty,0\}=\frac{1}{XY}\mu_{f'}$. Fixing $m\in\Z_{\geq 0}$, the moment $R\Phi_{f'}^{-k}\{\infty,0\}(z^m)$, is equal to the coefficient of $z^{-m+k}\frac{dz}{z}$ in 
\begin{equation*}
\Phi_{f'}^{-k}\{\infty,0\} = \sum_{n \geq 0} {-k \choose n} \left(\int_{\Z_p\times\Z_p^\times} (x/y)^ny^kd\mu_{f'}\right) z^{-n}\frac{dz}{z}.
\end{equation*}
Therefore $R\Phi_{f'}^{-k}(z^m)=0$ if $m<k$; otherwise
\begin{equation*}
	R\Phi_{f'}^{-k}\{\infty,0\}(z^m) = {-k \choose m-k} \int_{\Z_p\times\Z_p^\times}x^{m-k} y^{-m} d\mu_{f'}(x,y).
\end{equation*}
The binomial coefficient ${-k \choose m-k}$ is equal to $(-1)^{m-k} { m-1 \choose m-k}=(-1)^{m-k} {m-1 \choose k-1}$. Using the factorization of $\mu_{f'}$ gives
\begin{equation*}
\int_{\Z_p\times\Z_p^\times}x^{m-k} y^{-m} d\mu_{f'}(x,y)=\int_{\Z_p} x^{m-k} d(D_x\xi_{f'_1}) \int_{\Z_p^\times} y^{-m} d(D_y\xi_{f'_2}).
\end{equation*}
Note that the second factor $\int_{\Z_p^\times} y^{-m}d(D_y\xi_{f'_2})$ is equal to $L_p( D_y\xi_{f'_2},-m+1)=-mL_p(\xi_{f'_2},-m)$. Restricting to $\Z_p^\times$ and applying Lemma \ref{L:restriction}, a calculation shows  that $\int_{\Z_p^\times} x^m d(R{\Phi}_{f'}^{-k})=(-1)^{m-k}k {m \choose k+1} L_p(\xi_{f'_1},m-k)L_p(\xi_{f'_2},-m)$. Therefore 
\begin{equation}\label{E:payoff}
L_p(\frac{1}{k} R(\Phi_{f'}^k),m)=(-1)^{m-k}k {m \choose k+1} L_p(\xi_{f'_1},m-k)L_p(\xi_{f'_2},-m).
\end{equation}
Since the integers are dense in weight-space, we have the theorem in the case $k>0$.


The case $k=0$ is similar; we briefly indicate the necessary changes while omitting the details. First, we use Lemma \ref{L:is_zero} to deduce
\begin{equation}
	\frac{1}{k}R(\Phi_{f'}^k)\{\infty,0\}= \sum_{n\geq 0} {k-1 \choose n} \left(\int_{\Z_p\times\Z_p^\times} (x/y)^n y^{k-1} d\mu_{f'}\right) z^{-n} \frac{dz}{z},
\end{equation}
and then use the factorization of $\mu_{f'}$ as $D_x\xi_{f'_1}\times \xi_{f'_2}$. Specializing to $k=0$ gives the result.
 \end{proof}

Now we are ready to prove the first case of Theorem C. 
\begin{proof}
Consider $f'_{\tau,\psi}$, with $\tau,\psi$ as above. The test function factorizes as $f_1'\times f_2'$, where $f'_1\otimes[\Z]=\sum_{i=1}^M\tau^{-1}(i)[i+L\Z]$ and $f_2'\otimes[\Z]=\sum_{j=1}^M \psi(j)[jL + N\Z]=\left(\sum_{j=1}^m\psi(j)[j+M\Z]\right)|[L]^{*-1}$. By Lemma \ref{L:satisfied}, we $f'_{\tau,\psi}$ satisfies the hypothesis of Theorem \ref{T:padicLfunction}. It follows from the results of \cite{Ste1} that $\xi_{f'_1}(z^n)$ is equal to the coefficient of $X^n$ in the Laurent series
\begin{align*}
\sum_{i \text{ mod }{L}} \tau^{-1}(i) \frac{e^{iX}}{1-e^{LX}},
\end{align*}
which is equal to $=B_{n,\tau^{-1}}/n=-L(\tau^{-1},1-n)$. It follows that
$$L_p(\xi_{f'_1},n)=\xi_{f'_1}|_{\Z_p^\times}(z^n)=-(1-\tau(p)p^{n-1})L(\tau^{-1},1-n)=L_p(\tau^{-1},n).$$
\begin{rem}
Note that our conventions here differ from other places in the literature, including \cite{BeDa}. In particular, we are writing $L_p(\tau^{-1},n)$ for the evaluation of $L_p(\tau,s)$ on the character $z\mapsto z^n$. 
\end{rem}
A similar calculation shows $L_p(\xi_{f_2'},s)=-L^{-s} L_p(\psi,s)$. Now if $(-1)^s=\psi(-1)$, then $L_p(E_{k+2,\psi,\tau}^{crit},s)=L_p(\frac{1}{k}R\Phi_{f'_{\tau,\psi}}^{-k},s)$ and
$$L_p(E_{k+2,\psi,\tau}^{crit},s)=L^{-s}{\wt{s}  \choose k+1}L_p(\tau^{-1},s-k)L_p(\psi,-s).$$
\end{proof}
%
%
%
To prove the second case of Theorem C, we take $f'=f_\ell'$ and $k=0$. The same calculations show
$$L_p(E_{2,\ell}^{crit},s)=\wt(s)(1- \ell^{s})\zeta_p(s+1)  \zeta_p(1-s).$$

\bibliographystyle{amsplain}
\providecommand{\bysame}{\leavevmode\hbox to3em{\hrulefill}\thinspace}
\providecommand{\MR}{\relax\ifhmode\unskip\space\fi MR }
\providecommand{\MRhref}[2]{%
  \href{http://www.ams.org/mathscinet-getitem?mr=#1}{#2}
}

\end{document}